\documentclass[12pt]{amsart}
%\linespread{2}

\usepackage{accents}
\usepackage{appendix}
\usepackage{amsfonts}
\usepackage{amsmath}
\usepackage{amssymb}	
\usepackage{amsthm,bm}
\usepackage{array,booktabs,multirow}
\usepackage{braket}
\usepackage{centernot}
\usepackage{cite}
\usepackage{comment}
\usepackage{dsfont}
\usepackage{mathalfa}
\usepackage[shortlabels]{enumitem} 
\usepackage{etoolbox}
\usepackage{float}
\usepackage[hang, flushmargin]{footmisc}
\usepackage{latexsym}
\usepackage{lipsum}
\usepackage{needspace}
\usepackage{tikz}
\usepackage{hyperref}
\usetikzlibrary{matrix,arrows}  
\usepackage{kbordermatrix}

\makeatletter
\def\namedlabel#1#2{\begingroup
   \def\@currentlabel{#2}%
   \label{#1}\endgroup
}
\makeatother

% Left delimiter
% Right delimiter
%\newcommand{\bigzero}{\mbox{\normalfont\Large\bfseries 0}}

\theoremstyle{plain}
\newtheorem{thm}{Theorem}[section]

\newtheorem{lem}[thm]{Lemma}
\newtheorem{prop}[thm]{Proposition}

\theoremstyle{definition}

\newtheorem{defn}[thm]{Definition}
\newtheorem{exam}[thm]{Example}

\theoremstyle{remark}

\setlist[enumerate,1]{leftmargin=2em}

\typeout{Substyle for letter-sized documents. Released 24 July 1992}

%PAGE LAYOUT
%\setlength{\parindent}{0in}
\setlength{\topmargin}{-1in}
\setlength{\headheight}{1.5cm}
\setlength{\headsep}{0.3cm}
\setlength{\textheight}{9in}
\setlength{\oddsidemargin}{0cm}
\setlength{\evensidemargin}{0cm}
\setlength{\textwidth}{6.5in}

\def\H{\mathfrak H}

\def\F{\mathbb F}
\def\Z{\mathbb Z}

\def\e{\varepsilon}

\title[The universal DAHA of type $(C_1^\vee,C_1)$ and Leonard triples]{The universal DAHA of type $(C_1^\vee,C_1)$ and Leonard triples}

\author{Hau-Wen Huang}
\address{
Hau-Wen Huang\\
Department of Mathematics\\
National Central University\\
Chung-Li 32001 Taiwan
}
\email{hauwenh@math.ncu.edu.tw}

\begin{document}
\begin{abstract}
Assume that $\F$ is an algebraically closed field and $q$ is a nonzero scalar in $\F$ that is not a root of unity. 
The universal Askey--Wilson algebra $\triangle_q$ is a unital associative $\F$-algebra generated by $A,B,C$ and the relations state that each of 
$$
A+\frac{q BC-q^{-1} CB}{q^2-q^{-2}},
\qquad 
B+\frac{q CA-q^{-1} AC}{q^2-q^{-2}},
\qquad 
C+\frac{q AB-q^{-1} BA}{q^2-q^{-2}}
$$ 
is central in $\triangle_q$. 
The universal DAHA $\mathfrak H_q$ of type $(C_1^\vee,C_1)$ is a unital associative $\F$-algebra generated by $\{t_i^{\pm 1}\}_{i=0}^3$ and the relations state that 
\begin{gather*}
t_it_i^{-1}=t_i^{-1} t_i=1
\quad 
\hbox{for all $i=0,1,2,3$};
\\
\hbox{$t_i+t_i^{-1}$ is central} 
\quad 
\hbox{for all $i=0,1,2,3$};
\\
t_0t_1t_2t_3=q^{-1}.
\end{gather*}
It was given an $\F$-algebra homomorphism $\triangle_q\to \H_q$ that sends  
\begin{eqnarray*}
A &\mapsto & t_1 t_0+(t_1 t_0)^{-1},
\\
B &\mapsto & t_3 t_0+(t_3 t_0)^{-1},
\\
C &\mapsto & t_2 t_0+(t_2 t_0)^{-1}.
\end{eqnarray*}
Therefore any $\H_q$-module can be considered as a $\triangle_q$-module.
Let $V$ denote a finite-dimensional irreducible $\H_q$-module. In this paper we show that $A,B,C$ are diagonalizable on $V$ if and only if $A,B,C$ act as Leonard triples 
on all composition factors of the $\triangle_q$-module $V$. 
\end{abstract}

\maketitle

{\footnotesize{\bf Keywords:} double affine Hecke algebras, Askey--Wilson algebras, Leonard pairs, Leonard triples.}

{\footnotesize{\bf MSC2020:} 16G30, 33D45, 33D80, 81R10, 81R12.}

\allowdisplaybreaks

\section{Introduction}\label{s:introduction}

Throughout this paper, we adopt the following conventions. Assume that $\F$ is an algebraically closed field and fix a nonzero scalar $q\in \F$ that is not a root of unity.

We begin this paper by recalling the notion of Leonard pairs and Leonard triples. We will use the following terms. A square matrix is said to be {\it tridiagonal} if each nonzero entry lies on either the diagonal, the superdiagonal, or the subdiagonal. A tridiagonal matrix is said to be {\it irreducible} if each entry on the superdiagonal is nonzero and each entry on the subdiagonal is nonzero.

\begin{defn}
[Definition 1.1, \cite{lp2001}]
\label{defn:lp}
Let $V$ denote a vector space over $\F$ with finite positive dimension. By a {\it Leonard pair} on $V$, we mean a pair of linear operators $L: V \to V$ and $L^* : V \to V$ that satisfy both (i), (ii) below.
\begin{enumerate}
\item There exists a basis for $V$ with respect to which the matrix representing $L$ is diagonal and the matrix representing $L^*$ is irreducible tridiagonal. 

\item There exists a basis for $V$ with respect to which the matrix representing $L^*$  is diagonal and the matrix representing $L$ is irreducible tridiagonal.
\end{enumerate}
\end{defn}

\begin{defn}
[Definition 1.2, \cite{cur2007}]
\label{defn:lt}
Let $V$ denote a vector space over $\F$ with finite positive dimension. 
By a {\it Leonard triple} on $V$, we mean a triple of linear operators $L:V\to V,L^* :V\to V,L^\e:V\to V$ that satisfy the conditions (i)--(iii) below.
\begin{enumerate}
\item There exists a basis for $V$ with respect to which the matrix representing $L$ is diagonal and the matrices representing $L^* $ and $L^\e$ are irreducible tridiagonal.

\item There exists a basis for $V$ with respect to which the matrix representing $L^* $ is diagonal and the matrices representing $L^\e$ and $L$ are irreducible tridiagonal.

\item There exists a basis for $V$ with respect to which the matrix representing $L^\e$ is diagonal and the matrices representing $L$ and $L^* $ are irreducible tridiagonal.
\end{enumerate}
\end{defn}

In \cite{Leo1982}, Leonard gave a characterization of the $q$-Racah polynomials. The Leonard pairs provide a linear algebra interpretation of Leonard's theorem \cite[Appendix A]{lp2001}. The Leonard triples are a natural generalization of Leonard pairs.

Next we recall the universal Askey--Wilson algebra and its connections to the Leonard pairs and Leonard triples. 
In \cite{hidden_sym}, Zhedanov proposed the Askey--Wilson algebras to link the representation theory and the Askey--Wilson polynomials. In \cite{uaw2011}, Terwilliger introduced the universal Askey--Wilson algebra as a central extension of the Askey--Wilson algebras and the definition is as follows.

\begin{defn}
[Definition 2.1, \cite{uaw2011}]
\label{defn:UAW}
The  {\it universal Askey--Wilson algebra} $\triangle_q$ is a unital associative $\F$-algebra defined by generators and relations in the following way. The generators are $A, B, C$ and the relations state that each of
\begin{gather}\label{UAW_central}
A+\frac{q BC-q^{-1} CB}{q^2-q^{-2}},
\qquad 
B+\frac{q CA-q^{-1} AC}{q^2-q^{-2}},
\qquad 
C+\frac{q AB-q^{-1} BA}{q^2-q^{-2}}
\end{gather}
is central in $\triangle_q$. 
\end{defn}

\noindent Let $V$ denote a vector space over $\F$ with finite positive dimension. An eigenvalue $\theta$ of a linear operator $L:V\to V$ is said to be {\it multiplicity-free} if the algebraic multiplicity of $\theta$ is equal to $1$. A linear operator $L:V\to V$ is said to be {\it  multiplicity-free} if all eigenvalues of $L$ are multiplicity-free. The representation theory of $\triangle_q$ is related to Leonard pairs and Leonard triples in the following ways:

\begin{lem}
[Theorem 6.2, \cite{lp&awrelation}; Theorem 5.2, \cite{Huang:2015}]
\label{lem:lp}
If $V$ is a finite-dimensional irreducible $\triangle_q$-module then the following are equivalent:
\begin{enumerate}
\item $A,B$ {\rm (}resp. $B,C${\rm )} {\rm (}resp. $C,A${\rm )} are diagonalizable on $V$.

\item $A,B$ {\rm (}resp. $B,C${\rm )} {\rm (}resp. $C,A${\rm )} are multiplicity-free on $V$.

\item $A,B$ {\rm (}resp. $B,C${\rm )} {\rm (}resp. $C,A${\rm )} act as a Leonard pair on $V$.
\end{enumerate}
\end{lem}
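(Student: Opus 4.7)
By the cyclic symmetry in $A,B,C$ of the defining relations \eqref{UAW_central}, it suffices to prove the equivalences for the pair $(A,B)$. The plan is to proceed via (iii) $\Rightarrow$ (i) $\Rightarrow$ (ii) $\Rightarrow$ (iii). The implication (iii) $\Rightarrow$ (i) is immediate from Definition \ref{defn:lp}, since the existence of a basis on which $A$ is diagonal already forces $A$ to be diagonalizable, and likewise for $B$.

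The structural input for the remaining implications is the following. Because $V$ is irreducible and $\F$ is algebraically closed, Schur's lemma forces each of the three central elements in \eqref{UAW_central} to act as a scalar on $V$; call these scalars $\alpha,\beta,\gamma\in\F$. Substituting and eliminating $C$ from the three centrality relations yields a pair of degree-three (Askey--Wilson type) identities, schematically of the form
\begin{align*}
A^2 B - (q^2+q^{-2})\,ABA + BA^2 &= \rho_1 B + \eta_1 A + \zeta_1 I, \\
B^2 A - (q^2+q^{-2})\,BAB + AB^2 &= \rho_2 A + \eta_2 B + \zeta_2 I,
\end{align*}
with constants $\rho_i,\eta_i,\zeta_i$ depending only on $q,\alpha,\beta,\gamma$. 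These relations force a tridiagonal action: in a suitable ordering of the distinct eigenvalues of $A$ dictated by the three-term recurrence they encode, $B$ sends each $A$-eigenspace into the sum of itself and its two immediate neighbours, and symmetrically with the roles of $A$ and $B$ swapped.

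For (i) $\Rightarrow$ (ii): if both $A,B$ are diagonalizable, then any union of consecutive $A$-eigenspaces is stable under $A$ and $B$. Since the scalar action of the central elements in \eqref{UAW_central} expresses $C$ on $V$ as a rational combination of $A,B$, any such union is in fact $\triangle_q$-invariant; irreducibility of $V$ then compels every $A$-eigenspace (and by the symmetric argument every $B$-eigenspace) to be one-dimensional. For (ii) $\Rightarrow$ (iii): multiplicity-freeness supplies an eigenbasis of $A$ in which the tridiagonal action above presents $B$ as a tridiagonal matrix; the diagonalizability of $B$ combined with irreducibility of $V$ precludes any zero on the super- or subdiagonal, so the matrix is irreducible tridiagonal. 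Swapping the roles of $A$ and $B$ yields the second basis required by Definition \ref{defn:lp}.

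The main obstacle is the final nonvanishing step: proving that the off-diagonal entries of these tridiagonal matrices are all nonzero, so that the matrices are \emph{irreducible} tridiagonal rather than merely tridiagonal. A zero subdiagonal entry would expose a proper subspace stable under both $A$ and $B$, hence under the full $\triangle_q$-action via the recovery of $C$, contradicting irreducibility of $V$; turning this into a clean proof requires one to track the three-term recurrence carefully enough to identify the precise ordering of eigenvalues for which the tridiagonal shape appears, and this is the technical core of the argument.
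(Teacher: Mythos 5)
The paper does not supply its own proof of this lemma --- it is cited from \cite{lp&awrelation} and \cite{Huang:2015} --- but the machinery by which the cited sources (and this paper in Lemma \ref{lem:dia_UAW}) actually establish the result is quite different from what you sketch, and your sketch has a genuine gap.

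The problematic step is (i) $\Rightarrow$ (ii). You assert that ``if both $A,B$ are diagonalizable, then any union of consecutive $A$-eigenspaces is stable under $A$ and $B$.'' This is false precisely because of the tridiagonal structure you just derived: if $B$ maps each $A$-eigenspace $V_i$ into $V_{i-1}+V_i+V_{i+1}$, then a block $V_j+\cdots+V_k$ with $0<j$ or $k<d$ is \emph{not} $B$-invariant, since $BV_j$ has a component in $V_{j-1}$ and $BV_k$ has a component in $V_{k+1}$. Even setting that aside, the logic is circular: if such a proper nonzero $\triangle_q$-submodule did exist, irreducibility of $V$ would refute the claim that the block is invariant, not deliver the conclusion that every eigenspace is one-dimensional. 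Passing from ``$A$ diagonalizable'' to ``$A$ multiplicity-free'' requires additional input that the abstract tridiagonal-pair picture alone does not give (there exist tridiagonal pairs of $q$-Racah type with eigenspaces of dimension exceeding one). The actual argument in the cited sources, mirrored in Lemma \ref{lem:dia_UAW} and Proposition \ref{prop:UAWd} here, goes through the classification: by Theorem \ref{thm:onto_UAW} every $(d+1)$-dimensional irreducible $\triangle_q$-module is isomorphic to some $V_d(a,b,c)$, on which $A$ is lower bidiagonal with all subdiagonal entries equal to $1$ and diagonal entries $\theta_i=aq^{2i-d}+a^{-1}q^{d-2i}$. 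Because the subdiagonal is nowhere zero, $v_0$ is a cyclic vector for $A$, so the minimal and characteristic polynomials of $A$ coincide, and $A$ is diagonalizable if and only if the $\theta_i$ are distinct, i.e.\ if and only if $A$ is multiplicity-free. The same reasoning applied to the upper bidiagonal presentation handles $B$. With this in hand the implication (ii) $\Rightarrow$ (iii) is also essentially immediate: the nonvanishing of the off-diagonal entries that you flag as ``the technical core'' is automatic, since the subdiagonal entries of $[A]$ are identically $1$ and the superdiagonal entries $\varphi_i$ of $[B]$ are nonzero by the irreducibility criterion of Theorem \ref{thm:irr_UAW}; conjugating to the eigenbasis of $A$ (resp.\ $B$) preserves this nonvanishing, yielding the irreducible tridiagonal shape.

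In short: your reduction to $(A,B)$ by $\mathbb{Z}/3\mathbb{Z}$ symmetry and the disposal of (iii) $\Rightarrow$ (i) are fine, but (i) $\Rightarrow$ (ii) cannot be run through the invariant-subspace argument you propose, and the honest proof needs the explicit bidiagonal model of irreducible $\triangle_q$-modules rather than the abstract Askey--Wilson relations alone.
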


\begin{lem}
[Theorem 5.3, \cite{Huang:2015}]
\label{lem:lt}
If $V$ is a finite-dimensional irreducible $\triangle_q$-module then the following are equivalent:
\begin{enumerate}
\item $A,B,C$ are diagonalizable on $V$.

\item $A,B,C$ are multiplicity-free on $V$.

\item $A,B,C$ act as a Leonard triple on $V$.
\end{enumerate}
\end{lem}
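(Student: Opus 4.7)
The forward direction is essentially formal. Suppose $A, B, C$ are diagonalizable on $V$. Any composition factor $W$ of the $\triangle_q$-module $V$ has the form $W = V'/V''$ for $\triangle_q$-submodules $V'' \subset V' \subseteq V$, and since $A, B, C \in \triangle_q$, both $V'$ and $V''$ are invariant under $A, B, C$. Diagonalizability is inherited by invariant subspaces and by their quotients, so $A, B, C$ remain diagonalizable on the irreducible $\triangle_q$-module $W$; Lemma \ref{lem:lt} then yields that $A, B, C$ act as a Leonard triple on $W$.

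The backward direction is the main obstacle. By Lemma \ref{lem:lt}, the hypothesis that $A, B, C$ form a Leonard triple on each composition factor is equivalent to their being multiplicity-free, hence diagonalizable, on each composition factor. This does not formally imply diagonalizability on $V$, since two diagonalizable operators on successive pieces of a filtration can glue to a non-diagonalizable operator. The plan is to leverage the $\H_q$-structure beyond $\triangle_q$. Since $V$ is irreducible over $\H_q$ and each $t_i+t_i^{-1}$ is central in $\H_q$, there exist scalars $c_0,c_1,c_2,c_3\in\F$ with $t_i+t_i^{-1}=c_i$ on $V$, and hence $t_i^2-c_i t_i+1=0$ on $V$ for $i=0,1,2,3$. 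Combined with $t_0t_1t_2t_3=q^{-1}$ and the formulas $A = t_1t_0+(t_1t_0)^{-1}$, $B = t_3t_0+(t_3t_0)^{-1}$, $C = t_2t_0+(t_2t_0)^{-1}$, these constraints sharply restrict the $\H_q$-module structure on $V$.

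I would then argue by contrapositive. Assume $A$ admits a Jordan block on $V$, so there exist $\theta\in\F$ and $v\in V$ with $(A-\theta)^2 v=0$ but $u:=(A-\theta)v\neq 0$. Fix a composition series of $V$ as a $\triangle_q$-module, and let $W_v$, $W_u$ be composition factors in which the images $\bar v$, $\bar u$ are nonzero; both images are $\theta$-eigenvectors of $A$, and by multiplicity-freeness each occupies a specific one-dimensional subspace of its composition factor. The key technical step is to produce elements of $\H_q$ built from the $t_i$'s which intertwine the action of $A,B,C$ in a way that transports the relation $Av=\theta v+u$ along the composition series inconsistently with these one-dimensional $\theta$-eigenspaces, yielding a contradiction. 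The symmetry of $A,B,C$ in the generators of $\H_q$ then gives the same conclusion for $B$ and $C$.

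The main obstacle is the technical step just above: identifying the correct elements of $\H_q\setminus \triangle_q$ and verifying the accompanying identities using only the quadratic relations $t_i^2-c_i t_i+1=0$ and the product relation $t_0t_1t_2t_3=q^{-1}$. An alternative fallback, should this direct attack prove intractable, is to establish the stronger statement that under the Leonard-triple hypothesis $V$ is semisimple as a $\triangle_q$-module; the diagonalizability of $A,B,C$ on $V$ then follows immediately by passing to simple summands.
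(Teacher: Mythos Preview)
Your proposal does not address Lemma~\ref{lem:lt}. That lemma concerns a finite-dimensional irreducible $\triangle_q$-module $V$: in that setting $V$ is its own unique composition factor, there is no ambient $\H_q$-structure to exploit, and the assertion is a purely internal equivalence among diagonalizability, multiplicity-freeness, and the Leonard-triple property for $A,B,C$. Your argument instead treats $V$ as an irreducible $\H_q$-module, speaks of composition factors of the $\triangle_q$-module $V$, and invokes Lemma~\ref{lem:lt} itself as an input. All of this only makes sense if you are attempting Theorem~\ref{thm:lt} (or the underlying Theorem~\ref{thm:diagonal}), not Lemma~\ref{lem:lt}; citing the lemma you are asked to prove is circular.

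Note too that the paper does not supply a proof of Lemma~\ref{lem:lt}: it is quoted from \cite{Huang:2015} and used as a black box. The actual content of that result rests on the classification of finite-dimensional irreducible $\triangle_q$-modules (Proposition~\ref{prop:UAWd}, Theorems~\ref{thm:irr_UAW} and~\ref{thm:onto_UAW}) and the explicit parameter criterion in Lemma~\ref{lem:dia_UAW}, none of which your proposal engages with.

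Even read as an attempt at Theorem~\ref{thm:diagonal}, your backward direction is only a plan: you correctly locate the obstruction (diagonalizability need not survive gluing across a filtration) but leave the ``key technical step'' of producing the needed elements of $\H_q$ entirely open, as you acknowledge. The paper proceeds very differently: it invokes the explicit classification of irreducible $\H_q$-modules into the families $E(k_0,k_1,k_2,k_3)$ and $O(k_0,k_1,k_2,k_3)$, computes the actions of $t_it_j$ in distinguished bases (Lemmas~\ref{lem:t0t1_E}--\ref{lem:t1t3+t1t3inv_diag} and \ref{lem:t0t1_O}--\ref{lem:ABC_diag_O}), and matches the resulting numerical conditions on the $k_i$ against those coming from Lemma~\ref{lem:dia_UAW} via Theorems~\ref{thm:CF_even} and~\ref{thm:CF_odd}.
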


\noindent If the equivalent statements (i)--(iii) of Lemma \ref{lem:lp} hold, then the Leonard pairs are of $q$-Racah type \cite{lp2001}. If the equivalent statements (i)--(iii) of Lemma \ref{lem:lt}, then the Leonard triple is of $q$-Racah type \cite{Huang:2012}.

Next we recall the universal DAHA (double affine Hecke algebra) of type $(C_1^\vee,C_1)$ and its connection to the universal Askey--Wilson algebra $\triangle_q$. In \cite{Cherednik:1995,Cherednik:1992}  the DAHAs  were introduced by  Cherednik in connection with quantum affine Knizhni--Zamolodchikov equations and Macdonald eigenvalue problems. The DAHAs of type $(C_1^\vee,C_1)$ are closely related to the Askey--Wilson polynomials and their nonsymmetric counterpart \cite{sahi1,sahi2,DAHA&OP_book,Noumi04,koo07,koo08,daha&Z3,DAHA2013}. In\cite{DAHA2013}, the universal DAHA of type $(C_1^\vee,C_1)$ was introduced by  Terwilliger as a central extension of the DAHAs of type $(C_1^\vee,C_1)$ and the definition is as follows.

\begin{defn}
[Definition 3.1, \cite{DAHA2013}] 
\label{defn:H}
The {\it universal DAHA $\H_q$ of type $(C_1^\vee,C_1)$} is a unital associative $\F$-algebra defined by generators and relations. The generators are $\{t_i^{\pm 1}\}_{i=0}^3$ and the relations state that  
\begin{gather}
t_it_i^{-1}=t_i^{-1} t_i=1
\quad 
\hbox{for all $i=0,1,2,3$};
\nonumber
\\
\hbox{$t_i+t_i^{-1}$ is central} 
\quad 
\hbox{for all $i=0,1,2,3$};
\label{ti+tiinv}
\\
t_0t_1t_2t_3=q^{-1}.
\label{t0123}
\end{gather}
\end{defn}

\noindent The algebra $\triangle_q$ is related to the algebra $\H_q$ in the following way and this result can be considered as a universal analogue of \cite[Corollary 6.3]{koo07}.

\begin{thm}
[Theorem 4.1, \cite{DAHA2013}]
\label{thm:hom}
There exists a unique $\F$-algebra homomorphism $\triangle_q\to \H_q$ that sends 
\begin{eqnarray*}
A &\mapsto & t_1 t_0+(t_1 t_0)^{-1},
\\
B &\mapsto & t_3 t_0+(t_3 t_0)^{-1},
\\
C &\mapsto & t_2 t_0+(t_2 t_0)^{-1}.
\end{eqnarray*}
\end{thm}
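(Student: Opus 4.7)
The plan is to apply the universal property of $\triangle_q$. By Definition~\ref{defn:UAW}, $\triangle_q$ is presented on the generators $A,B,C$ subject solely to the relations asserting centrality of the three expressions in \eqref{UAW_central}. Consequently any homomorphism $\triangle_q\to\H_q$ is uniquely determined by its values on $A,B,C$; existence amounts to verifying that the proposed images $\hat A=t_1t_0+(t_1t_0)^{-1}$, $\hat B=t_3t_0+(t_3t_0)^{-1}$, $\hat C=t_2t_0+(t_2t_0)^{-1}$ satisfy the three centrality relations of \eqref{UAW_central} inside $\H_q$.

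My strategy is to realize each of the three centrality expressions as a polynomial in the obvious central elements $\omega_i := t_i + t_i^{-1}$ ($i=0,1,2,3$). To set up the bookkeeping, I would first use the scalar relation \eqref{t0123} to trade one generator for the others in each of $\hat A, \hat B, \hat C$. For instance, $t_3t_0 = q^{-1} t_2^{-1} t_1^{-1}$ gives $\hat B = q t_1 t_2 + q^{-1} t_2^{-1} t_1^{-1}$, so that $\hat B$ involves only $t_1, t_2$; analogous rearrangements bring $\hat A, \hat C$ into forms depending only on two generators each. Then I would expand each $q$-commutator $q \hat X \hat Y - q^{-1} \hat Y \hat X$ and systematically rearrange using the commutation $\omega_i t_j = t_j \omega_i$ (equivalently, $t_i t_j + t_i^{-1} t_j = t_j t_i + t_j t_i^{-1}$) to move factors past one another, together with the quadratic identity $t_i^2 = \omega_i t_i - 1$ to reduce powers. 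The anticipated closed form, modeled on the non-universal analogue in \cite[Cor.~6.3]{koo07}, is something like
$$
\hat C + \frac{q\hat A\hat B - q^{-1}\hat B\hat A}{q^2 - q^{-2}} = \frac{\omega_0\omega_2 + \omega_1 \omega_3}{q+q^{-1}},
$$
together with two cyclic variants for the other two expressions; each such right-hand side is manifestly central.

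The main obstacle is the combinatorial bulk of the verification: expanding a single $q$-commutator produces roughly a dozen four-letter words in $t_0^{\pm 1}, t_1^{\pm 1}, t_2^{\pm 1}$, and the cancellations into a central combination emerge only after a disciplined choice of normal form (for example, always pushing powers of $t_0$ to the right). A useful shortcut would be an automorphism of $\H_q$ cyclically permuting $\hat A, \hat B, \hat C$, which would reduce the three checks to one; however, such a symmetry is not immediate, since \eqref{t0123} is not invariant under all permutations of the $t_i$. Absent a conceptual device of this kind, I expect the disciplined bookkeeping of one lengthy computation, followed by analogous computations for the remaining two expressions, to be the bottleneck.
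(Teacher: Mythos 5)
This paper does not itself prove Theorem~\ref{thm:hom}; the result is imported from \cite[Theorem~4.1]{DAHA2013}, so there is no internal proof to compare against. Your outline agrees with the approach in that reference: uniqueness is automatic since $A,B,C$ generate $\triangle_q$, and existence reduces to checking that the three elements
\[
\hat A+\frac{q\hat B\hat C-q^{-1}\hat C\hat B}{q^2-q^{-2}},\qquad
\hat B+\frac{q\hat C\hat A-q^{-1}\hat A\hat C}{q^2-q^{-2}},\qquad
\hat C+\frac{q\hat A\hat B-q^{-1}\hat B\hat A}{q^2-q^{-2}}
\]
are central in $\H_q$, where $\hat A,\hat B,\hat C$ denote the proposed images. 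Your anticipated closed form is also the correct target: with $c_i=t_i+t_i^{-1}$, the induced map sends $\alpha\mapsto c_0c_1+c_2c_3$, $\beta\mapsto c_0c_3+c_1c_2$, $\gamma\mapsto c_0c_2+c_1c_3$, exactly as in \cite[Theorem~4.1]{DAHA2013}, and each right-hand side is visibly central.

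That said, the proposal halts precisely where the content of the theorem begins: the displayed identities \emph{are} the theorem, and until at least one $q$-commutator is expanded and reduced to the claimed central combination, nothing has been established --- you concede as much yourself. Two concrete remedies. First, prove up front that $t_it_j+(t_it_j)^{-1}=t_jt_i+(t_jt_i)^{-1}$; this follows in a few lines from centrality of $t_i+t_i^{-1}$ and $t_j+t_j^{-1}$ (this paper quotes it as \cite[Lemma~3.8]{daha&Z3}), and it shows in particular that each of $\hat A,\hat B,\hat C$ commutes with $t_0$. Second, the cyclic shortcut you wish for does exist: the automorphism $\rho$ of $\H_q$ sending $(t_0,t_1,t_2,t_3)\mapsto(t_0,\,t_0^{-1}t_2t_0,\,t_0^{-1}t_3t_0,\,t_1)$, already used in Lemma~\ref{lem:t0t2_E} of this paper, permutes $\hat A,\hat B,\hat C$ cyclically up to conjugation by $t_0$; combined with the commutation fact just mentioned, applying $\rho$ carries one centrality relation to the next, so only one of the three expansions need actually be done. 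This $\Z/3\Z$ structure is developed in \cite{daha&Z3}.
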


By virtue of Theorem \ref{thm:hom}, each $\H_q$-module can be viewed as a $\triangle_q$-module. 
Suppose that $V$ is a finite-dimensional irreducible $\H_q$-module. In \cite{daha&LP}, it was shown that if 
\begin{enumerate}
\item[(a)\namedlabel{t0}{(a)}] $t_0$ has two distinct eigenvalues on $V$;

\item[(b)\namedlabel{XY}{(b)}] $t_0 t_1$ and $t_3 t_0$ are diagonalizable on $V$,
\end{enumerate}
then $A$ and $B$ act as Leonard pairs on the eigenspaces of $t_0$ in $V$.
In \cite{Huang:AW&DAHAmodule}, it was given a classification of the lattices of $\triangle_q$-submodules of finite-dimensional irreducible $\H_q$-modules. 
As a consequence of \cite{Huang:AW&DAHAmodule}, the $\triangle_q$-module $V$ is completely reducible if and only if $t_0$ is diagonalizable on $V$; in this case the irreducible $\triangle_q$-submodules of $V$ are the eigenspaces of $t_0$ in $V$. Note that the condition \ref{t0} is a sufficient condition for $t_0$ as diagonalizable on $V$ but not a necessary condition. See \cite[\S5]{Huang:AW&DAHAmodule} for details.
In this paper we are devoted to proving the following result:

\begin{thm}\label{thm:diagonal}
If $V$ is a finite-dimensional irreducible $\H_q$-module then the following are equivalent:
\begin{enumerate}
\item $A$ {\rm (}resp. $B${\rm )} {\rm (}resp. $C${\rm )}
is diagonalizable on $V$.

\item $A$ {\rm (}resp. $B${\rm )} {\rm (}resp. $C${\rm )} is diagonalizable on all composition factors of the $\triangle_q$-module $V$.

\item $A$ {\rm (}resp. $B${\rm )} {\rm (}resp. $C${\rm )} is multiplicity-free on all composition factors of the $\triangle_q$-module $V$.
\end{enumerate}
\end{thm}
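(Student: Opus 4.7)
The three parts of the statement (for $A$, $B$, $C$) are proved by identical arguments; I will describe the one for $A$. The plan is to establish the cycle $\mathrm{(i)}\Rightarrow\mathrm{(ii)}\Rightarrow\mathrm{(iii)}\Rightarrow\mathrm{(i)}$, the last being the nontrivial step. The implication $\mathrm{(i)}\Rightarrow\mathrm{(ii)}$ is immediate, since diagonalizability of $A$ on $V$ descends to every $A$-invariant subspace and every quotient, hence to each composition factor of the $\triangle_q$-module $V$. The implication $\mathrm{(iii)}\Rightarrow\mathrm{(ii)}$ is also immediate: over the algebraically closed field $\F$, the algebraic multiplicities of the eigenvalues of a linear operator on a finite-dimensional space sum to the dimension, so multiplicity-freeness forces a basis of eigenvectors.

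For $\mathrm{(ii)}\Rightarrow\mathrm{(iii)}$, I would treat each composition factor separately; such a factor is a finite-dimensional irreducible $\triangle_q$-module $W$ on which $A$ is diagonalizable. The goal is to show $A$ is in fact multiplicity-free on $W$. This is the single-generator analogue of Lemma \ref{lem:lp}, and I would prove it by the same type of argument: the centrality relations \eqref{UAW_central}, when expanded relative to the $A$-eigendecomposition $W=\bigoplus_a W_a$, constrain the off-diagonal components of $B$ and $C$ so that they can connect only $A$-eigenspaces whose eigenvalues satisfy a specific $q$-compatibility; combining this with Schur's lemma and irreducibility of $W$ then forces the $A$-eigenvalues to form a chain in which every eigenspace is one-dimensional.

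The key implication is $\mathrm{(iii)}\Rightarrow\mathrm{(i)}$, for which I would invoke the classification from \cite{Huang:AW&DAHAmodule} of the lattice of $\triangle_q$-submodules of $V$. If $t_0$ is diagonalizable on $V$, then $V$ is a direct sum of its $t_0$-eigenspaces as a $\triangle_q$-module and the composition factors are exactly those summands; $A$-diagonalizability on each summand immediately yields $A$-diagonalizability on $V$. If $t_0$ is not diagonalizable, then by Schur's lemma applied to the central element $t_0+t_0^{-1}=k_0$ one has $t_0=\varepsilon(1+N)$ with $\varepsilon\in\{\pm 1\}$ and $N$ a nonzero square-zero operator. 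Substituting this into $A=t_1 t_0+(t_1 t_0)^{-1}$ and using centrality of $t_1+t_1^{-1}=k_1$ produces the identity $A=\varepsilon k_1+\varepsilon(t_1 N-N t_1^{-1})$. Using the explicit $t_0$-filtration of $V$ supplied by \cite{Huang:AW&DAHAmodule}, together with the multiplicity-free hypothesis on each composition factor, one plans to construct an $A$-eigenbasis of $V$ by lifting $A$-eigenvectors from the associated graded back along the filtration.

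The main obstacle is this last step in the non-semisimple case: one must verify that the nilpotent correction $\varepsilon(t_1 N-N t_1^{-1})$ contributed to $A$ by the Jordan block in $t_0$ is absorbable into an eigenbasis under the multiplicity-freeness hypothesis. This requires a careful analysis of how $t_1$ and $N$ interact with the submodule lattice of \cite{Huang:AW&DAHAmodule}, and it is here that the $\H_q$-irreducibility of $V$ (beyond mere $\triangle_q$-irreducibility) plays an essential role: without it, one could construct modules in which $A$ fails to be diagonalizable on $V$ despite being scalar on every composition factor, so the argument must extract from $\H_q$-irreducibility the precise compatibility that rules out such configurations.
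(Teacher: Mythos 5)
Your implication $\mathrm{(i)}\Rightarrow\mathrm{(ii)}$ and the reduction of $\mathrm{(ii)}\Leftrightarrow\mathrm{(iii)}$ to a statement about irreducible $\triangle_q$-modules are both correct and match the paper's strategy (the latter is precisely Lemma~\ref{lem:dia_UAW}, which the paper simply cites). The genuine gap is in $\mathrm{(iii)}\Rightarrow\mathrm{(i)}$, and you acknowledge it yourself: in the non-semisimple case you propose to ``construct an $A$-eigenbasis of $V$ by lifting $A$-eigenvectors from the associated graded back along the filtration,'' but this lifting is exactly the hard step, and your sketch (the identity $A=\varepsilon k_1+\varepsilon(t_1N-Nt_1^{-1})$ plus a vague plan for the nilpotent correction) does not carry it out. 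The difficulty is concrete: when a $\triangle_q$-submodule $W\subset V$ and the quotient $V/W$ share an $A$-eigenvalue $\theta$, the $\theta$-generalized eigenspace of $A$ in $V$ can be a nontrivial Jordan block even though $A$ is diagonal on both $W$ and $V/W$. Hypothesis $\mathrm{(iii)}$ alone does not forbid coincidences of eigenvalues \emph{between} the two composition factors, so one must prove that whenever such a coincidence occurs the off-diagonal contribution vanishes, and nothing in your argument does that.

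The paper closes this gap by an entirely explicit calculation rather than a structural lifting argument. It reduces to the two standard families $E(k_0,k_1,k_2,k_3)^{\e}$ and $O(k_0,k_1,k_2,k_3)$, finds explicit ``zig-zag'' bases (Lemmas~\ref{lem:t0t1_E}, \ref{lem:t0t2_E}, \ref{lem:t0t1_O}, \ref{lem:t0t2_O}) in which $A$, $B$, or $C$ is block-lower-triangular with scalar diagonal blocks of size at most two, and then uses the eigenvalue separation forced by $\mathrm{(iii)}$ together with the rank--nullity theorem to show each eigenspace has the full expected dimension (Lemmas~\ref{lem:t1t3+t1t3inv_diag} and~\ref{lem:ABC_diag_O}). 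The matching between the parameter condition from $\mathrm{(iii)}$ (Lemmas~\ref{lem:diag_ABC_E}--\ref{lem:diag_ABC_E3}, \ref{lem:diag_ABC_O_k02=1}, \ref{lem:diag_ABC_O_k02neq1}) and the sufficient condition for diagonalizability on $V$ is verified case by case in Theorems~\ref{thm:diagonal_even} and~\ref{thm:diagonal_odd}; in the odd case with $k_0^2=1$ the irreducibility criterion (Theorem~\ref{thm:irr_O}) supplies the extra eigenvalue separation needed. In short: your outline of where the difficulty lies is accurate, but the proposal stops exactly at the point where the real work begins, and without the explicit matrix analysis (or an equivalent) the implication $\mathrm{(iii)}\Rightarrow\mathrm{(i)}$ is not established.
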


\noindent 
Combined with Lemmas \ref{lem:lp} and \ref{lem:lt} we have the following byproducts:

\begin{thm}\label{thm:lp}
If $V$ is a finite-dimensional irreducible $\H_q$-module then the following are equivalent:
\begin{enumerate}
\item $A,B$ {\rm (}resp. $B,C${\rm )} {\rm (}resp. $C,A${\rm )}
are diagonalizable on $V$.

\item $A,B$ {\rm (}resp. $B,C${\rm )} {\rm (}resp. $C,A${\rm )} are diagonalizable on all composition factors of the $\triangle_q$-module $V$.

\item $A,B$ {\rm (}resp. $B,C${\rm )} {\rm (}resp. $C,A${\rm )} are multiplicity-free on all composition factors of the $\triangle_q$-module $V$.

\item $A,B$ {\rm (}resp. $B,C${\rm )} {\rm (}resp. $C,A${\rm )} act as Leonard pairs on all composition factors of the $\triangle_q$-module $V$.
\end{enumerate}
\end{thm}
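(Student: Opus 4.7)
The plan is to deduce Theorem \ref{thm:lp} directly from Theorem \ref{thm:diagonal} combined with Lemma \ref{lem:lp}. Because the argument is completely symmetric in the three choices of pair, I would write out only the case $(A,B)$; the cases $(B,C)$ and $(C,A)$ are verbatim repetitions. Note that a composition factor of the $\triangle_q$-module $V$ is, by definition, a finite-dimensional irreducible $\triangle_q$-module, so Lemma \ref{lem:lp} is applicable to it.

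For the equivalence (i) $\Leftrightarrow$ (ii), I would invoke Theorem \ref{thm:diagonal} twice, once with the distinguished generator $A$ and once with $B$. The first invocation gives that $A$ is diagonalizable on $V$ if and only if $A$ is diagonalizable on every composition factor of the $\triangle_q$-module $V$, and the second gives the analogous statement for $B$. Taking the conjunction of these two equivalences yields precisely (i) $\Leftrightarrow$ (ii).

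For the equivalence (ii) $\Leftrightarrow$ (iii) $\Leftrightarrow$ (iv), I would fix an arbitrary composition factor $W$ of the $\triangle_q$-module $V$ and apply Lemma \ref{lem:lp} to $W$; this returns that on $W$ the three conditions ``$A,B$ diagonalizable'', ``$A,B$ multiplicity-free'', and ``$A,B$ act as a Leonard pair'' are mutually equivalent. Universally quantifying this three-way equivalence over all composition factors of $V$ immediately gives (ii) $\Leftrightarrow$ (iii) $\Leftrightarrow$ (iv). There is no real obstacle at this level: the entire nontrivial content of Theorem \ref{thm:lp} is already absorbed into Theorem \ref{thm:diagonal}, whose delicate direction is the passage from diagonalizability on every composition factor to diagonalizability on all of $V$. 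This step is nontrivial precisely because, as recalled just before Theorem \ref{thm:diagonal}, the $\triangle_q$-module $V$ is in general not completely reducible (complete reducibility being controlled by the diagonalizability of $t_0$). Once Theorem \ref{thm:diagonal} is in hand, Theorem \ref{thm:lp} falls out as a two-line corollary via Lemma \ref{lem:lp}.
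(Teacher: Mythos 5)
Your proposal is correct and matches the paper's intended argument exactly: the paper itself presents Theorem \ref{thm:lp} as a byproduct obtained by combining Theorem \ref{thm:diagonal} with Lemma \ref{lem:lp}, which is precisely the two-step deduction you describe (conjoin two instances of Theorem \ref{thm:diagonal} for (i) $\Leftrightarrow$ (ii), then apply Lemma \ref{lem:lp} to each composition factor for (ii) $\Leftrightarrow$ (iii) $\Leftrightarrow$ (iv)).
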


\begin{thm}\label{thm:lt}
If $V$ is a finite-dimensional irreducible $\H_q$-module then the following are equivalent:
\begin{enumerate}
\item $A,B,C$
are diagonalizable on $V$.

\item $A,B,C$ are diagonalizable on all composition factors of the $\triangle_q$-module $V$.

\item $A,B,C$ are multiplicity-free on all composition factors of the $\triangle_q$-module $V$.

\item $A,B,C$ act as Leonard triples on all composition factors of the $\triangle_q$-module $V$.
\end{enumerate}
\end{thm}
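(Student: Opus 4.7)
\medskip

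The plan is to deduce Theorem \ref{thm:lt} directly from Theorem \ref{thm:diagonal} combined with Lemma \ref{lem:lt}, with essentially no new computation. The three ingredients are: (a) each composition factor of the $\triangle_q$-module $V$ is, by definition, a finite-dimensional irreducible $\triangle_q$-module, so Lemma \ref{lem:lt} applies to it; (b) Theorem \ref{thm:diagonal}, applied separately to each of $A$, $B$, $C$, transfers diagonalizability of a single generator between the ambient module $V$ and its composition factors; (c) the defining property of a Leonard triple immediately implies that each of the three operators is diagonalizable.

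First I would establish (i) $\Leftrightarrow$ (ii). Theorem \ref{thm:diagonal} asserts that $A$ is diagonalizable on $V$ if and only if $A$ is diagonalizable on every composition factor of the $\triangle_q$-module $V$, and the same statement holds for $B$ and for $C$. Conjoining these three equivalences yields that $A$, $B$, $C$ are simultaneously diagonalizable on $V$ precisely when each of them is diagonalizable on every composition factor, which is exactly (i) $\Leftrightarrow$ (ii).

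Next I would establish (ii) $\Leftrightarrow$ (iii) $\Leftrightarrow$ (iv) by applying Lemma \ref{lem:lt} composition factor by composition factor. Let $W$ be any composition factor of the $\triangle_q$-module $V$; then $W$ is a finite-dimensional irreducible $\triangle_q$-module, so the three conditions
\begin{enumerate}
\item $A,B,C$ are diagonalizable on $W$,
\item $A,B,C$ are multiplicity-free on $W$,
\item $A,B,C$ act as a Leonard triple on $W$,
\end{enumerate}
are mutually equivalent by Lemma \ref{lem:lt}. Since (ii), (iii), (iv) of Theorem \ref{thm:lt} are just the quantification of these local conditions over every composition factor $W$, their pairwise equivalence follows at once.

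The main obstacle is not in this deduction itself but in Theorem \ref{thm:diagonal}, which carries the genuine content: the passage from diagonalizability of a single generator on $V$ to diagonalizability on each composition factor requires controlling how $A$ (and separately $B$, $C$) interacts with the lattice of $\triangle_q$-submodules of $V$ described in \cite{Huang:AW&DAHAmodule}. Once Theorem \ref{thm:diagonal} is in hand, the argument above is essentially formal: (i) $\Leftrightarrow$ (ii) is three applications of that theorem, and (ii) $\Leftrightarrow$ (iii) $\Leftrightarrow$ (iv) is a quantified version of Lemma \ref{lem:lt}.
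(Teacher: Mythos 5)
Your proposal is correct and matches the paper's approach exactly: the paper obtains Theorem \ref{thm:lt} as an immediate byproduct of Theorem \ref{thm:diagonal} (applied to each of $A$, $B$, $C$ for (i) $\Leftrightarrow$ (ii)) together with Lemma \ref{lem:lt} applied composition factor by composition factor (for (ii) $\Leftrightarrow$ (iii) $\Leftrightarrow$ (iv)). No new ideas beyond that deduction are needed, and you have filled in the formal bookkeeping precisely as intended.
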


\noindent Note that the condition \ref{XY} is a sufficient condition for $A$ and $B$ as diagonalizable on $V$ but not a necessary condition. See Examples \ref{exam:b_E} and \ref{exam:b_O}.

The paper is organized as follows. In \S\ref{s:AWmodule} we recall some sufficient and necessary conditions for $A,B,C$ as diagonalizable on finite-dimensional irreducible $\triangle_q$-modules. In \S\ref{s:even} we prove Theorem \ref{thm:diagonal} in the even-dimensional case. In \S\ref{s:odd} we prove Theorem \ref{thm:diagonal} in the odd-dimensional case.

\section{The conditions for $A,B,C$ as diagonalizable on finite-dimensional irreducible $\triangle_q$-modules}\label{s:AWmodule}

In this section, we recall the equivalent conditions for $A,B,C$ as diagonalizable on finite-dimensional irreducible $\triangle_q$-modules. We begin by some facts concerning the finite-dimensional irreducible $\triangle_q$-modules.

Define the elements $\alpha,\beta,\gamma$ of $\triangle_q$ obtained by multiplying the three elements (\ref{UAW_central}) by $q+q^{-1}$, respectively. In other words, 
\begin{eqnarray*}
\frac{\alpha}{q+q^{-1}} 
&=&
A+\frac{q BC-q^{-1} CB}{q^2-q^{-2}},
\\
\frac{\beta}{q+q^{-1}} 
&=&
B+\frac{q CA-q^{-1} AC}{q^2-q^{-2}},
\\
\frac{\gamma}{q+q^{-1}} 
&=& 
C+\frac{q AB-q^{-1} BA}{q^2-q^{-2}}.
\end{eqnarray*}
By Definition \ref{defn:UAW} the elements $\alpha,\beta,\gamma$ are central in $\triangle_q$.

\begin{prop}
[\S 4.1, \cite{Huang:2015}]
\label{prop:UAWd}
For any nonzero scalars $a,b,c\in \F$ and any integer $d\geq 0$, there exists a $(d+1)$-dimensional $\triangle_q$-module $V_d(a,b,c)$  satisfying the following conditions {\rm (i)}, {\rm (ii)}: 
\begin{enumerate}
\item There exists an $\F$-basis for $V_d(a,b,c)$ with respect to which the matrices representing $A$ and $B$ are 
$$
\begin{pmatrix}
\theta_0 & & &  &{\bf 0}
\\ 
1 &\theta_1 
\\
&1 &\theta_2
 \\
& &\ddots &\ddots
 \\
{\bf 0} & & &1 &\theta_d
\end{pmatrix},
\qquad 
\begin{pmatrix}
\theta_0^* &\varphi_1 &  & &{\bf 0}
\\ 
 &\theta_1^* &\varphi_2
\\
 &  &\theta_2^* &\ddots
 \\
 & & &\ddots &\varphi_d
 \\
{\bf 0}  & & & &\theta_d^*
\end{pmatrix},
$$
respectively, where 
\begin{align*}
\theta_i 
&=
a q^{2i-d}+a^{-1} q^{d-2i}
\qquad 
\hbox{for $i=0,1,\ldots,d$},
\\
\theta^*_i 
&=
b q^{2i-d}+b^{-1} q^{d-2i}
\qquad 
\hbox{for $i=0,1,\ldots,d$},
\\
\varphi_i 
&=
a^{-1} b^{-1} q^{d+1}
(q^i-q^{-i})
(q^{i-d-1}-q^{d-i+1})
\\
&\qquad \times \;
(q^{-i}-a b c q^{i-d-1})
(q^{-i}-a b c^{-1} q^{i-d-1})
\qquad 
\hbox{for $i=1,2,\ldots,d$}.
\end{align*}

\item The elements $\alpha,\beta,\gamma$ act on $V_d (a,b,c)$ as scalar multiplication by
\begin{align*}
(b+b^{-1}) (c+c^{-1})+(a+a^{-1}) (q^{d+1}+q^{-d-1}),
\\
(c+c^{-1}) (a+a^{-1})+(b+b^{-1}) (q^{d+1}+q^{-d-1}),
\\
(a+a^{-1}) (b+b^{-1})+(c+c^{-1}) (q^{d+1}+q^{-d-1}),
\end{align*}
respectively.
\end{enumerate} 
\end{prop}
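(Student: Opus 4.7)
My plan is to exhibit $V_d(a,b,c)$ by an explicit construction and verify the $\triangle_q$-relations in the given basis. Take $V$ to be a $(d+1)$-dimensional $\F$-vector space with basis $\{v_0,\ldots,v_d\}$, and define linear operators $A,B$ on $V$ by the lower- and upper-bidiagonal matrices prescribed in part (i), namely
$$
Av_i=\theta_iv_i+v_{i+1}\quad(v_{d+1}:=0), \qquad Bv_i=\theta_i^*v_i+\varphi_iv_{i-1}\quad(v_{-1}:=0).
$$

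The third operator $C$ is then forced by the $\gamma$-centrality relation of Definition \ref{defn:UAW}: set $C=\gamma_0/(q+q^{-1})-(qAB-q^{-1}BA)/(q^2-q^{-2})$, where $\gamma_0=(a+a^{-1})(b+b^{-1})+(c+c^{-1})(q^{d+1}+q^{-d-1})$ is the scalar advertised for $\gamma$ in part (ii). By construction, $C+(qAB-q^{-1}BA)/(q^2-q^{-2})$ acts on $V$ as the scalar $\gamma_0/(q+q^{-1})$; in particular it commutes with every linear operator on $V$, so this centrality relation is automatic and $\gamma$ acts on $V$ by $\gamma_0$.

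The remaining task is to verify the other two centrality relations by direct computation: show that $A+(qBC-q^{-1}CB)/(q^2-q^{-2})$ acts as the scalar $\alpha_0/(q+q^{-1})$ with $\alpha_0=(b+b^{-1})(c+c^{-1})+(a+a^{-1})(q^{d+1}+q^{-d-1})$, and symmetrically for $\beta$. Since $A,B,C$ are all bi- or tridiagonal in $\{v_i\}$, the products $BC,CB,CA,AC$ are supported on boundedly many diagonals, so the claim reduces to a finite batch of polynomial identities in $q,a,b,c$ and $i$, checked entry by entry.

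The main obstacle is confirming these polynomial identities. The key input is the quartic factorization $\varphi_i=a^{-1}b^{-1}q^{d+1}(q^i-q^{-i})(q^{i-d-1}-q^{d-i+1})(q^{-i}-abcq^{i-d-1})(q^{-i}-abc^{-1}q^{i-d-1})$ in (i): the factors $(q^{-i}-abc^{\pm 1}q^{i-d-1})$ are engineered precisely so that the off-diagonal contributions to $(qBC-q^{-1}CB)$ and $(qCA-q^{-1}AC)$ telescope, leaving only a constant diagonal action after adding $A$ or $B$. Alternatively, one can bypass the direct telescoping by recognizing $(A,B)$ as a Leonard pair of $q$-Racah type, since the given matrices match the standard Leonard-pair shape of \cite{lp2001}; Zhedanov's observation that every Leonard pair satisfies the Askey--Wilson relations then yields the two remaining centrality conditions at once, and the central scalars $\alpha_0,\beta_0,\gamma_0$ fall out of the standard parameter formulas for a $q$-Racah Leonard pair.
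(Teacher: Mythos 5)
The paper itself does not prove this proposition; it is imported wholesale from \cite{Huang:2015}, so there is no in-text argument to compare against. Your outline is the natural one: define $A$ and $B$ by the prescribed bidiagonal matrices, let the $\gamma$-relation \emph{define} $C$ as $\gamma_0/(q+q^{-1})-(qAB-q^{-1}BA)/(q^2-q^{-2})$, observe that the $\gamma$-relation then holds for free and that $\gamma$ acts by $\gamma_0$, and then check the remaining two centrality relations as identities of banded matrices. That strategy is correct and, if carried through, proves the proposition.

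The genuine gap is that the decisive step is only asserted, not performed. Once $C$ is computed as a tridiagonal matrix, each of $qBC-q^{-1}CB$ and $qCA-q^{-1}AC$ lives on four diagonals, and one must verify that three of those diagonals cancel identically and the fourth produces the constant $\alpha_0/(q+q^{-1})$ (respectively $\beta_0/(q+q^{-1})$) after adding $A$ (respectively $B$). These are Laurent-polynomial identities in $q^i,q^d,a,b,c$, and the factored form of $\varphi_i$ does make them tractable, but nothing in the proposal actually establishes them; the sentence ``the factors are engineered precisely so that the off-diagonal contributions telescope'' is a heuristic, not a verification. For a statement whose entire content is a list of matrix entries, the computation \emph{is} the proof.

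The suggested shortcut via Leonard pairs and Zhedanov's theorem is also not airtight as stated. It requires the pair $(A,B)$ to actually be a Leonard pair, which in turn requires every $\varphi_i$ to be nonzero; the proposition, however, is claimed for \emph{all} nonzero $a,b,c$, including parameters for which some $\varphi_i=0$ (and the module is reducible). You would need an additional Zariski-density/specialization argument: the relations to be verified are polynomial identities in the parameters, they hold on the dense open locus where all $\varphi_i\neq 0$, hence they hold identically. If you add that sentence, the alternative route becomes a legitimate second proof; without it, it proves strictly less than what the proposition asserts.
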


The sufficient and necessary condition for the $\triangle_q$-module $V_d(a,b,c)$ as irreducible was given in \cite[Theorem 4.4]{Huang:2015}. Moreover, those irreducible $\triangle_q$-modules are all $(d+1)$-dimensional irreducible $\triangle_q$-modules up to isomorphism \cite[Theorem 4.7]{Huang:2015}. The statements are as follows:

\begin{thm}
[Theorem 4.4, \cite{Huang:2015}]
\label{thm:irr_UAW}
%Assume that $\F$ is of characteristic zero. 
For any nonzero scalars $a,b,c\in \F$ and any integer $d\geq 0$, the $\triangle_q$-module $V_d(a,b,c)$ is irreducible if and only if $$
abc, 
a^{-1}bc, 
ab^{-1}c, 
abc^{-1}
\not\in 
\{q^{2i-d-1}\,|\, i=1,2,\ldots,d\}.
$$
\end{thm}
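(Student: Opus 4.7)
My plan is to prove both implications separately, exploiting the lower/upper bidiagonal shape of $A$ and $B$ on the split basis $v_0, \ldots, v_d$ together with the $\mathbb{Z}/3$ cyclic symmetry of $\triangle_q$. This cyclic symmetry is the algebra automorphism sending $A \mapsto B \mapsto C \mapsto A$ (and hence $\alpha \mapsto \beta \mapsto \gamma \mapsto \alpha$); it is well-defined because the three central expressions in Definition \ref{defn:UAW} are cyclically permuted among themselves by the substitution, and it allows the whole analysis to be rerun with the roles of $(A, B, C)$ and of the parameters $(a, b, c)$ cyclically permuted.

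For the forward direction (irreducibility implies all four forbidden conditions), the key observation is that the subspace $W_i := \mathrm{span}\{v_i, v_{i+1}, \ldots, v_d\}$ is automatically $A$-invariant, and it is $B$-invariant precisely when the off-diagonal contribution in $B v_i = \varphi_i v_{i-1} + \theta_i^* v_i$ vanishes, i.e., precisely when $\varphi_i = 0$. Because $\gamma$ is central and acts as a scalar on $V_d(a, b, c)$, the operator $C$ is expressible as a polynomial in $A$ and $B$ (shifted by a scalar), so $A, B$-invariance upgrades automatically to $\triangle_q$-invariance. Hence $\varphi_i = 0$ for some $i \in \{1, \ldots, d\}$ produces a proper nonzero $\triangle_q$-submodule $W_i$, contradicting irreducibility. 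Inspecting $\varphi_i$, the only two factors that can vanish for $i \in \{1, \ldots, d\}$ are $q^{-i} - abc\, q^{i-d-1}$ and $q^{-i} - abc^{-1} q^{i-d-1}$, yielding the forbidden sets for $abc$ and $abc^{-1}$. The remaining obstructions on $a^{-1}bc$ and $ab^{-1}c$ then follow by running the same submodule construction in the cyclically shifted split bases corresponding to the pairs $(B, C)$ and $(C, A)$.

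For the converse (all four conditions imply irreducibility), suppose $W$ is a proper nonzero $\triangle_q$-submodule. Since every $\varphi_i \neq 0$, the ascending recursion $v_{i+1} = (A - \theta_i) v_i$ shows that any submodule containing $v_0$ must equal $V$, and the descending recursion $v_{i-1} = \varphi_i^{-1}(B - \theta_i^*) v_i$ does the same for $v_d$; so $W$ is forced into the common kernel of the $v_0$- and $v_d$-coordinate functionals. Invoking cyclic symmetry, the analogous extremal-coordinate functionals in the two other split bases must also annihilate $W$. I would then combine these constraints with the explicit tridiagonal action of $C$ on $v_0, \ldots, v_d$ (whose subdiagonal entries $-b^{-1} q^{d-2i-1}$ are always nonzero and whose superdiagonal entries are nonzero scalar multiples of $\varphi_i$) to propagate the vanishing from the extremal coordinates into the interior coordinates and conclude $W = 0$.

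The main obstacle I expect is this converse in the degenerate regime: the four hypotheses do not force the eigenvalues $\theta_i$ or $\theta_i^*$ to be distinct, so $(A, B)$ need not constitute a Leonard pair and one cannot directly conclude irreducibility from Leonard-pair theory. Careful bookkeeping of invariant flags in this regime—exploiting the rigidity coming simultaneously from all three split bases together with the tridiagonal action of $C$—will be the technical heart of the proof, rather than just trapping $W$ between two tail subspaces of the form $W_i$.
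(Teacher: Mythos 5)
This theorem is only cited in the paper from \cite{Huang:2015}; no proof is given here, so I cannot compare against the paper's own argument and will assess your attempt on its merits. Your overall framework is sensible: the bidiagonal shapes of $A,B$ on the split basis, the observation that $\gamma$ central lets $C$ be written as a polynomial in $A,B$ plus a scalar (so $(A,B)$-invariance upgrades to $\triangle_q$-invariance), and the $\Z/3\Z$ cyclic symmetry are all the right ingredients. In the forward direction your argument detects $\varphi_i=0$, which controls exactly the conditions on $abc$ and $abc^{-1}$; getting the conditions on $a^{-1}bc$ and $ab^{-1}c$ from cyclic symmetry is plausible but requires a lemma you invoke without proving: that the pullback of $V_d(a,b,c)$ along the cyclic automorphism $A\mapsto B\mapsto C\mapsto A$ is isomorphic to some $V_d(a',b',c')$ with $(a',b',c')$ a cyclic permutation of $(a^{\pm1},b^{\pm1},c^{\pm1})$. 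Theorem \ref{thm:onto_UAW} gives existence of parameters only for irreducible modules and does not by itself identify them; that identification needs the central scalars together with an isomorphism criterion for the $V_d(\cdot,\cdot,\cdot)$, which you have not supplied.

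The converse direction contains a genuine error, not merely an omission. From $v_0\in W\Rightarrow W=V$ and $v_d\in W\Rightarrow W=V$ you conclude that a proper submodule $W$ lies in the common kernel of the $v_0$- and $v_d$-coordinate functionals. This inference does not follow, and the conclusion is in fact false. Since $A$ is lower bidiagonal with all subdiagonal entries equal to $1$, any eigenvector $u=\sum_i c_iv_i$ of $A$ satisfies the recursion $c_{i-1}=(\lambda-\theta_i)c_i$; if $c_d=0$ this forces $u=0$, so every nonzero $A$-eigenvector has nonzero $v_d$-component. Any nonzero $A$-invariant $W$ contains an eigenvector of $A\vert_W$, hence a vector with nonzero $v_d$-coordinate, so $W$ is never contained in the kernel of the $v_d$-coordinate functional. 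Dually, when all $\varphi_i\neq0$, every nonzero $B$-eigenvector has nonzero $v_0$-component, so $W$ is never contained in the kernel of the $v_0$-coordinate functional either. Your proposed ``propagation from the extremal coordinates'' therefore has no valid starting point. What $\varphi_i\neq0$ actually buys is that $A$ and $B$ are both cyclic operators (with cyclic vectors $v_0$ and $v_d$ respectively), so a proper nonzero submodule would satisfy $W=p(A)V=r(B)V$ for nontrivial divisors $p,r$ of the respective characteristic polynomials; the real argument must show this coincidence is impossible under the hypotheses, and this is precisely where the conditions on $a^{-1}bc$ and $ab^{-1}c$ must enter. Your outline does not reach that step.
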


\begin{thm}
[Theorem 4.7, \cite{Huang:2015}]
\label{thm:onto_UAW}
Let $d\geq 0$ denote an integer. If $V$ is a $(d+1)$-dimensional irreducible $\triangle_q$-module, then there are nonzero $a,b,c\in \F$ such that the $\triangle_q$-module $V_d(a,b,c)$ is isomorphic to $V$.
\end{thm}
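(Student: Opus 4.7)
The plan is to extract scalars $a,b,c\in \F^\times$ from the irreducible module $V$ and then construct an explicit $\triangle_q$-module isomorphism $V_d(a,b,c)\to V$.

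First, I would use Schur's lemma: since $V$ is irreducible and finite-dimensional over the algebraically closed field $\F$, the central elements $\alpha,\beta,\gamma$ act as scalars $\alpha_V,\beta_V,\gamma_V\in \F$. Next, I would fix a nonzero eigenvector $v_0$ of $B$ on $V$ (which exists by algebraic closure) with eigenvalue $\mu$, and solve the quadratic $\mu=bq^{-d}+b^{-1}q^{d}$ for $b\in \F^\times$, so that $\mu$ plays the role of $\theta_0^*$ in the parametrization of $V_d(a,b,c)$. Setting $Y:=b+b^{-1}$, the three identities in Proposition \ref{prop:UAWd}(ii) form a linear system in $X:=a+a^{-1}$ and $Z:=c+c^{-1}$ that is generically nondegenerate; after solving for $X,Z$, I would extract $a,c\in \F^\times$ from the quadratics $a^2-Xa+1=0$ and $c^2-Zc+1=0$. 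Any degenerate choice of $v_0$ could be avoided by replacing it with an eigenvector for a different eigenvalue of $B$, or by swapping the symmetric roles of $A,B,C$.

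Second, with $\theta_i:=aq^{2i-d}+a^{-1}q^{d-2i}$ determined by the extracted $a$, I would inductively define
\begin{equation*}
v_{i+1}:=(A-\theta_i)v_i \qquad (i=0,1,\ldots,d-1),
\end{equation*}
so that $Av_i=\theta_i v_i+v_{i+1}$ automatically. The span $W=\mathrm{span}(v_0,\ldots,v_d)$ is then $A$-invariant by construction. To conclude $W=V$ and that $\{v_0,\ldots,v_d\}$ is a basis (forcing in particular $(A-\theta_d)v_d=0$), it suffices to show $W$ is also $B$-invariant, since the defining relations of $\triangle_q$ express $C$ as an $\F$-linear combination of $AB,BA$ and the central scalar $\gamma_V$.

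The hard part, and the technical heart of the proof, will be verifying
\begin{equation*}
Bv_i=\varphi_i v_{i-1}+\theta_i^* v_i \qquad (i=0,1,\ldots,d),
\end{equation*}
with $\theta_i^*:=bq^{2i-d}+b^{-1}q^{d-2i}$ and $\varphi_i$ the four-factor product of Proposition \ref{prop:UAWd}(i). The base case $i=0$ holds by the choice of $v_0$. Inductively, I would compute $Bv_{i+1}=BAv_i-\theta_i Bv_i$ by moving $B$ past $A$ via the identity
\begin{equation*}
qAB-q^{-1}BA=(q-q^{-1})\gamma-(q^2-q^{-2})C,
\end{equation*}
derived from Definition \ref{defn:UAW}, and then eliminate $C$ using the analogous relation defining $\beta$. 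After substituting the scalar actions of $\alpha,\beta,\gamma$ and the chosen parameters $a,b,c$, the coefficient of $v_{i+1}$ in the result must vanish and the coefficient of $v_i$ must equal the prescribed $\varphi_{i+1}$. The technical crux is showing that the resulting rational expression in $a,b,c,q,i$ factors exactly as the product of four $q$-differences in Proposition \ref{prop:UAWd}(i); it is precisely this factorization that dictates the parameter identification made in the first paragraph. Once this is established for all $i$, the $\F$-linear map sending the standard basis of $V_d(a,b,c)$ to $(v_0,\ldots,v_d)$ is the desired $\triangle_q$-module isomorphism.
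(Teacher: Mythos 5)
Your overall plan --- extract $a,b,c$ from the central character and a $B$-eigenvector, build a bidiagonal basis inductively, and verify the $B$-action --- is the natural shape of such a classification argument, but as written it has two genuine gaps.

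First, the choice of eigenvector $v_0$ is ill-posed. In $V_d(a,b,c)$ the operator $B$ is upper bidiagonal in the standard basis $\{e_i\}$, so $e_0$ is the $B$-eigenvector with eigenvalue $\theta_0^*=bq^{-d}+b^{-1}q^d$; but $B$ typically has several eigenvectors, and it need not be diagonalizable (Lemma \ref{lem:dia_UAW}). If you pick an eigenvector associated to $\theta_j^*$ for some $j>0$ and then solve $\mu=b'q^{-d}+b'^{-1}q^{d}$, you obtain a shifted parameter $b'=bq^{2j}$ (or $b'=b^{-1}q^{2j-2d}$), after which the tower $v_{i+1}=(A-\theta_i)v_i$ will neither terminate at step $d$ nor stay independent. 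Saying the degenerate choices "can be avoided by replacing $v_0$" is not a proof; you must identify which eigenvalue works and why, typically by showing it is the one for which the tower collapses after exactly $d+1$ steps.

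Second, the assertion that $W=\mathrm{span}(v_0,\ldots,v_d)$ is ``$A$-invariant by construction'' is not true as stated: $Av_d=\theta_dv_d+(A-\theta_d)v_d$, and $(A-\theta_d)v_d$ has no reason to lie in $W$ until you have already proved $W=V$ (or proved $(A-\theta_d)v_d=0$ directly). So the claimed chain ``$B$-invariance $\Rightarrow$ $W=V$ $\Rightarrow$ $A$-invariance'' is circular; one needs either a linear-independence argument making $W=V$ by dimension count, or a direct vanishing argument for $(A-\theta_d)v_d$ built into the induction. Two further loose ends compound this: (a) once $Y=b+b^{-1}$ is fixed, the three identities of Proposition \ref{prop:UAWd}(ii) give three equations in the two unknowns $X=a+a^{-1}$, $Z=c+c^{-1}$, so a compatibility relation among $\alpha_V,\beta_V,\gamma_V$ must be established before solving; and (b) the entire ``technical crux'' --- vanishing of the $v_{i+1}$-coefficient and factorization of the $v_i$-coefficient into $\varphi_{i+1}$ --- is what validates the parameter extraction, and it is asserted rather than carried out.
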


It was given the following sufficient and necessary conditions for $A,B,C$ as diagonalizable on $V_d(a,b,c)$.

\begin{lem}
[Lemma 5.1, \cite{Huang:2015}]
\label{lem:dia_UAW}
%Assume that $\F$ is of characteristic zero. 
Let $d\geq 0$ denote an integer and let $a,b,c$ denote nonzero scalars in $\F$. If the $\triangle_q$-module $V_d(a,b,c)$ is irreducible then the following are equivalent:
\begin{enumerate}
\item $A$ {\rm (}resp. $B${\rm )}  {\rm (}resp. $C${\rm )} is diagonalizable on $V_d(a,b,c)$.

\item $A$ {\rm (}resp. $B${\rm )}  {\rm (}resp. $C${\rm )} is multiplicity-free on $V_d(a,b,c)$.

\item $a^2$  {\rm (}resp. $b^2${\rm )}  {\rm (}resp. $c^2${\rm )} is not among $q^{2d-2},q^{2d-4},\ldots,q^{2-2d}$.
\end{enumerate}
\end{lem}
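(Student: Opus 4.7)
I would prove the three symmetric statements separately: handle the $A$ case by a direct eigenvector analysis on the explicit basis of Proposition \ref{prop:UAWd}(i), mimic the argument for $B$ after noting that irreducibility forces the $\varphi_i$'s to be nonzero, and reduce the $C$ case to the $A$ case via the cyclic automorphism of $\triangle_q$.

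$A$ case. Let $v_0,\ldots,v_d$ denote the basis from Proposition \ref{prop:UAWd}(i), so $Av_i=\theta_i v_i+v_{i+1}$ for $0\le i<d$ and $Av_d=\theta_d v_d$. The equation $Aw=\theta w$ on $w=\sum_{j=0}^d c_j v_j$ is equivalent to the downward recurrence $c_{j-1}=(\theta-\theta_j)c_j$ for $1\le j\le d$ together with the boundary relation $(\theta-\theta_0)c_0=0$. Any nonzero solution must have $c_d\ne 0$, and iterating the recurrence converts the boundary relation into $c_d\prod_{k=0}^d(\theta-\theta_k)=0$. Hence the eigenvalues of $A$ are exactly $\theta_0,\ldots,\theta_d$ and each eigenspace is one-dimensional. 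Since the characteristic polynomial of the triangular matrix is $\prod_{k=0}^d(X-\theta_k)$, the algebraic multiplicity of an eigenvalue $\theta$ equals the number of indices $i$ with $\theta_i=\theta$. Combining these, $A$ is diagonalizable iff every $\theta_i$ has algebraic multiplicity one, iff the $\theta_i$'s are pairwise distinct, iff $A$ is multiplicity-free. For distinct $i,j\in\{0,\ldots,d\}$, the equation $\theta_i=\theta_j$ simplifies, after dividing by the nonzero factor $q^{2i}-q^{2j}$, to $a^2=q^{2d-2(i+j)}$; as $i+j$ runs over $\{1,2,\ldots,2d-1\}$ the right-hand side traverses $\{q^{2d-2},q^{2d-4},\ldots,q^{2-2d}\}$. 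This establishes (i) $\iff$ (ii) $\iff$ (iii) for $A$.

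$B$ case. The matrix of $B$ in the same basis is upper bidiagonal with diagonal entries $\theta_0^*,\ldots,\theta_d^*$ and superdiagonal entries $\varphi_1,\ldots,\varphi_d$. The factorization of $\varphi_i$ displayed in Proposition \ref{prop:UAWd}(i) combined with the irreducibility criterion of Theorem \ref{thm:irr_UAW} (which forces $abc,abc^{-1}\notin\{q^{2i-d-1}\,|\,i=1,\ldots,d\}$) shows $\varphi_i\ne 0$ for $1\le i\le d$. The analogous upward recurrence $c_{j+1}=\varphi_{j+1}^{-1}(\theta-\theta_j^*)c_j$ then shows each eigenspace of $B$ is one-dimensional, and the same bookkeeping as for $A$ yields the $b^2$-version of (iii).

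$C$ case. The defining relations of $\triangle_q$ in Definition \ref{defn:UAW} are invariant under the cyclic permutation $A\mapsto B$, $B\mapsto C$, $C\mapsto A$, so this extends to an $\F$-algebra automorphism $\sigma$ of $\triangle_q$ that cycles $\alpha,\beta,\gamma$ correspondingly. Twisting $V_d(a,b,c)$ by $\sigma^{-1}$ produces an irreducible $(d+1)$-dimensional $\triangle_q$-module on which the new action of $A$ coincides with the original action of $C$. Matching the scalar actions of $\alpha,\beta,\gamma$ and invoking Theorem \ref{thm:onto_UAW} identifies this twisted module with $V_d(c,a,b)$, so the already-established $A$ case applied to $V_d(c,a,b)$ gives the $C$ case for $V_d(a,b,c)$.

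The main technical point is the geometric-multiplicity step in the $A$ case: even when several diagonal entries $\theta_i$ collide, the rigidity enforced by the $1$'s on the subdiagonal keeps every eigenspace one-dimensional, and this is the only non-automatic input that makes diagonalizability equivalent to multiplicity-freeness.
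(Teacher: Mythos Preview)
The paper does not actually prove this lemma; it is quoted from \cite{Huang:2015} with only a citation, so there is no in-paper argument to compare against. Your $A$ and $B$ cases are correct and self-contained, and the cyclic-automorphism idea for $C$ is sound: twisting by $\sigma^{-1}$ gives another irreducible $(d{+}1)$-dimensional $\triangle_q$-module, and applying the already-proved $A$ case to it immediately yields (i) $\Leftrightarrow$ (ii) for $C$.

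The one soft spot is the sentence ``Matching the scalar actions of $\alpha,\beta,\gamma$ and invoking Theorem~\ref{thm:onto_UAW} identifies this twisted module with $V_d(c,a,b)$.'' Theorem~\ref{thm:onto_UAW} only says the twist is isomorphic to \emph{some} $V_d(a',b',c')$; the three central scalars are symmetric under $a\mapsto a^{-1}$ etc.\ and, more seriously, the system $y'z'+x'w=\gamma_0$, $z'x'+y'w=\alpha_0$, $x'y'+z'w=\beta_0$ (with $x=a+a^{-1}$ and so on) need not have a unique solution, so the central character by itself does not pin down $a'+a'^{-1}=c+c^{-1}$, which is what you need for (iii). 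An easy fix: in the twist the new $B$ is the old $A$, whose characteristic polynomial you already know, and the trace of $B$ on $V_d(a',b',c')$ is $(b'+b'^{-1})\sum_{i=0}^d q^{2i-d}$; this forces $b'+b'^{-1}=a+a^{-1}$, and feeding that into the central-character equations then does yield $a'+a'^{-1}=c+c^{-1}$ (away from a couple of degenerate parameter values that one checks separately). Alternatively, bypass the identification entirely by computing the diagonal entries of $C=\frac{\gamma}{q+q^{-1}}-\frac{qAB-q^{-1}BA}{q^2-q^{-2}}$ in the basis of Proposition~\ref{prop:UAWd}(i) and running the same triangular-matrix argument as for $A$.
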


\section{The conditions for $A,B,C$ as diagonalizable on even-dimensional irreducible $\H_q$-modules}\label{s:even}

In this section we will prove Theorem \ref{thm:diagonal} in the even-dimensional case. We begin by some facts concerning the even-dimensional irreducible $\H_q$-modules.

Define
\begin{align*}
c_i&=t_i+t_i^{-1}
\qquad 
\hbox{for all $i=0,1,2,3$}.
\end{align*}
By Definition \ref{defn:H} the element $c_i$ is central in $\H_q$ for $i=0,1,2,3$.

\begin{prop}
[Proposition 2.3, \cite{Huang:DAHAmodule}]
\label{prop:E}
Let $d\geq 1$ denote an odd integer. Assume that $k_0,k_1,k_2,k_3$ are nonzero scalars in $\F$ with 
$$
k_0^2=q^{-d-1}.
$$ 
Then there exists a $(d+1)$-dimensional $\H_q$-module $E(k_0,k_1,k_2,k_3)$ satisfying the following conditions:
\begin{enumerate}
\item There exists an $\F$-basis $\{v_i\}_{i=0}^d$ for $E(k_0,k_1,k_2,k_3)$ such that 
\begin{align*}
t_0 v_i 
&=
\left\{
\begin{array}{ll}
\textstyle
%(k_0 q^i+k_0^{-1} q^{-i}-k_0-k_0^{-1}) 
k_0^{-1} q^{-i} (1-q^i) (1-k_0^2 q^i) 
v_{i-1}
+
(
k_0+k_0^{-1}-k_0^{-1}q^{-i}
) 
v_i
\qquad
&\hbox{for $i=2,4,\ldots,d-1$},
\\
k_0^{-1} q^{-i-1}
(v_i-v_{i+1})
\qquad
&\hbox{for $i=1,3,\ldots,d-2$},
\end{array}
\right.
\\
t_0 v_0&=k_0 v_0,
\qquad
t_0 v_d=k_0v_d,
\\
t_1 v_i
&=
\left\{
\begin{array}{ll}
-k_1(1-q^i)(1-k_0^2 q^i)v_{i-1}
+k_1 v_i
+k_1^{-1} v_{i+1}
\qquad
&\hbox{for $i=2,4,\ldots,d-1$},
\\
k_1^{-1} v_i
\qquad
&\hbox{for $i=1,3,\ldots,d$},
\end{array}
\right.
\\
t_1 v_0 &=k_1 v_0+k_1^{-1} v_1,
\\
t_2 v_i 
&=
\left\{
\begin{array}{ll}
k_0^{-1} k_1^{-1} k_3^{-1} q^{-i-1}
(v_i-v_{i+1})
\qquad
&\hbox{for $i=0,2,\ldots,d-1$},
\\
\textstyle
\frac{(k_0 k_1 k_3 q^i-k_2)
(k_0 k_1 k_3 q^i- k_2^{-1})}
{k_0 k_1 k_3 q^i  }
v_{i-1}
+
(k_2+k_2^{-1}-
k_0^{-1} k_1^{-1} k_3^{-1} q^{-i}
) v_i
\qquad
&\hbox{for $i=1,3,\ldots,d$},
\end{array}
\right.
\\
t_3 v_i 
&=
\left\{
\begin{array}{ll}
k_3 v_i
\qquad
&\hbox{for $i=0,2,\ldots,d-1$},
\\
-k_3^{-1}(k_0k_1k_3q^i-k_2)
(k_0k_1k_3 q^i-k_2^{-1})
v_{i-1}
+k_3^{-1} v_i
+k_3 v_{i+1}
\qquad
&\hbox{for $i=1,3,\ldots,d-2$}.
\end{array}
\right.
\\
t_3 v_d &=
-k_3^{-1}(k_0k_1k_3q^d-k_2)
(k_0k_1k_3 q^d-k_2^{-1})
v_{d-1}
+k_3^{-1} v_d.
\end{align*}

\item The elements $c_0, c_1,c_2,c_3$ act on $E(k_0,k_1,k_2,k_3)$ as scalar multiplication by 
$$
k_0+k_0^{-1},
\quad 
k_1+k_1^{-1},
\quad 
k_2+k_2^{-1},
\quad 
k_3+k_3^{-1},
$$
respectively.
\end{enumerate}
\end{prop}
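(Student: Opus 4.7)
The plan is to verify that the formulas in the statement define linear operators on the $(d+1)$-dimensional $\F$-vector space with basis $\{v_i\}_{i=0}^d$ that jointly satisfy the defining relations of $\H_q$. By Definition \ref{defn:H} there are three families of relations to check: each $t_i$ must be invertible (so that $t_i^{-1}$ is meaningful), each $t_i+t_i^{-1}$ must be central, and $t_0t_1t_2t_3$ must equal $q^{-1}$. A convenient reduction is to show that each $t_i$ satisfies the quadratic identity $(t_i-k_i)(t_i-k_i^{-1})=0$; this simultaneously forces $t_i$ invertible with $t_i^{-1}=(k_i+k_i^{-1})I-t_i$ and yields $t_i+t_i^{-1}=(k_i+k_i^{-1})I$, which is a scalar operator and hence automatically central. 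In particular the scalar action of $c_0,c_1,c_2,c_3$ claimed in part (ii) will be extracted at the same time.

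First I would verify the quadratic identity by a direct matrix computation: for each $i$ and each $j$, compute $t_i^2v_j$ from the prescribed formula and compare with $(k_i+k_i^{-1})t_iv_j-v_j$. The operators $t_0$ and $t_2$ are block-diagonal in the basis $\{v_i\}$ with $2\times 2$ blocks (for $t_0$ on the pairs $(1,2),(3,4),\ldots,(d-2,d-1)$, with $1\times 1$ blocks at indices $0$ and $d$; for $t_2$ on the pairs $(0,1),(2,3),\ldots,(d-1,d)$), so the identity reduces on each block to a routine trace/determinant check. The operators $t_1$ and $t_3$ are not block-diagonal but triangular in a suitable parity filtration, and the quadratic identity is verified basis vector by basis vector.

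The main work — and the step I expect to be the principal obstacle — is the relation $t_0t_1t_2t_3=q^{-1}I$. My strategy is to compute $t_0t_1t_2t_3v_i$ directly for each $i$, applying the operators from right to left. At each stage the image remains supported in a narrow window around $v_i$, because each $t_j$ shifts the support by at most one index in either direction. The computation branches according to the parity of $i$, with separate treatment of the boundary indices $i=0$ and $i=d$ where the formulas for $t_0$ and $t_3$ degenerate to singleton blocks. The hypothesis $k_0^2=q^{-d-1}$ is essential here: it is precisely the condition that forces the off-diagonal contributions to cancel, so that after rational simplification of the factors $(1-q^i)$, $(1-k_0^2q^i)$, and $(k_0k_1k_3q^i-k_2^{\pm 1})$ one is left with exactly $q^{-1}v_i$.

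Once both steps are in place, the prescribed formulas consistently endow $E(k_0,k_1,k_2,k_3)$ with the structure of an $\H_q$-module of dimension $d+1$, and part (ii) has been established along the way as a consequence of the quadratic identities.
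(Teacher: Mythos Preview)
Your approach is correct and is precisely the natural way to establish this proposition: verify the quadratic relations $(t_i-k_i)(t_i-k_i^{-1})=0$ (which, as you observe, simultaneously gives invertibility, the formula $t_i^{-1}=(k_i+k_i^{-1})I-t_i$, and the scalar action of $c_i$), and then verify $t_0t_1t_2t_3=q^{-1}I$ by a direct parity-split computation on basis vectors. Your block decomposition for $t_0$ and $t_2$ is accurate, and the observation that each $t_j$ moves support by at most one index keeps the product computation manageable.

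There is no proof in the present paper to compare against: Proposition~\ref{prop:E} is quoted from \cite{Huang:DAHAmodule} (Proposition~2.3 there) and is stated here without argument. In the original reference the proof is exactly the kind of direct verification you describe. One small refinement worth recording: the hypothesis $k_0^2=q^{-d-1}$ is already visible at the boundary before one reaches the product relation. The prescribed action $t_0v_d=k_0v_d$ is the specialization of the generic odd-index formula $t_0v_i=k_0^{-1}q^{-i-1}(v_i-v_{i+1})$ at $i=d$ with $v_{d+1}=0$, since $k_0^{-1}q^{-d-1}=k_0$; without this the boundary action would not be compatible with the pattern used in the product computation. Otherwise your outline is complete.
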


For convenience, we adopt the following conventions in the rest of this section: Let $d\geq 1$ denote an odd integer. Let $k_0,k_1,k_2,k_3$ denote nonzero scalars in $\F$ with $k_0^2=q^{-d-1}$. Let $\{v_i\}_{i=0}^d$ denote the 
$\F$-basis for $E(k_0,k_1,k_2,k_3)$ from Proposition \ref{prop:E}(i).

The sufficient and necessary condition for $E(k_0,k_1,k_2,k_3)$ as irreducible was given in \cite[Theorem 5.8]{Huang:DAHAmodule}. 
Moreover, all $(d+1)$-dimensional irreducible $\H_q$-modules are obtained by twisting those irreducible $\H_q$-modules $E(k_0,k_1,k_2,k_3)$ up to isomorphism. The statements are as follows:

\begin{thm}
[Theorem 5.8, \cite{Huang:DAHAmodule}]
\label{thm:irr_E}
The $\H_q$-module $E(k_0,k_1,k_2,k_3)$ is irreducible if and only if 
$$
k_0k_1k_2k_3, k_0k_1^{-1}k_2k_3, k_0k_1k_2^{-1}k_3, k_0k_1k_2 k_3^{-1}\not=q^{-i}
\qquad \hbox{for all $i=1,3,\ldots,d$}.
$$
\end{thm}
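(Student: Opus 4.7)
The plan is to prove both directions by a direct analysis of the matrix entries of the generators $t_i$ on the basis $\{v_j\}_{j=0}^d$ from Proposition~\ref{prop:E}. Each $t_i$ acts tridiagonally or bidiagonally on this basis, so the irreducibility question reduces to understanding which "ladder" transitions between consecutive basis vectors can vanish. A first pass through the formulas shows the following. The upward transitions $v_i\to v_{i+1}$ carry coefficients $k_1^{-1}$ (in $t_1 v_i$ for even $i$) or $k_3$ (in $t_3 v_i$ for odd $i$), and are therefore never zero. The downward transitions $v_{2k}\to v_{2k-1}$ carry the common factor $(1-q^{2k})(1-q^{2k-d-1})$, which is nonzero since $q$ is not a root of unity and $0<2k\le d-1$. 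The only transitions that can vanish are the downward arrows $v_i\to v_{i-1}$ for odd $i$, which in both $t_2 v_i$ and $t_3 v_i$ carry the common factor $(k_0 k_1 k_3 q^i - k_2)(k_0 k_1 k_3 q^i - k_2^{-1})$, vanishing precisely when $k_0 k_1 k_2 k_3=q^{-i}$ or $k_0 k_1 k_2^{-1} k_3=q^{-i}$; this already produces two of the four excluded cases.

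For the "only if" direction I would first observe that if $k_0 k_1 k_2 k_3=q^{-j}$ or $k_0 k_1 k_2^{-1} k_3=q^{-j}$ for some odd $j\in\{1,3,\ldots,d\}$, then direct case-by-case verification on each generator shows that $\operatorname{span}(v_j,v_{j+1},\ldots,v_d)$ is a proper $\H_q$-submodule, so $E(k_0,k_1,k_2,k_3)$ is reducible. The other two cases $k_0 k_1^{-1} k_2 k_3=q^{-j}$ and $k_0 k_1 k_2 k_3^{-1}=q^{-j}$ cannot be witnessed by such an initial-segment submodule; I would handle them by constructing an alternative basis $\{w_j\}_{j=0}^d$ for $E$ via triangular changes of basis that interchange the roles of $k_1\leftrightarrow k_1^{-1}$ as eigenvalues of $t_1$ on the even-indexed vectors (respectively $k_3\leftrightarrow k_3^{-1}$ as eigenvalues of $t_3$ on the odd-indexed vectors). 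Repeating the ladder analysis in $\{w_j\}$ yields downward factors of the form $(k_0 k_1^{-1} k_3 q^j - k_2)(k_0 k_1^{-1} k_3 q^j - k_2^{-1})$ (respectively with $k_3$ replaced by $k_3^{-1}$), whose vanishing is exactly the remaining two excluded hypotheses and produces the corresponding proper submodules.

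For the "if" direction, assume all four conditions hold and let $W\subseteq E$ be a nonzero submodule. By the ladder analysis in either basis, once $W$ contains one $v_j$ (respectively one $w_j$) it contains all of them, and hence equals $E$. To produce such a basis vector inside $W$, I would pick nonzero $u\in W$, decompose it using the generalized eigenspaces of $t_0$ (which acts with eigenvalues $k_0, k_0^{-1}$ and has explicit eigenvectors in each $(v_{2k-1},v_{2k})$-block), and then iteratively apply polynomials in the generators, built from the central elements $c_i=t_i+t_i^{-1}$ and the $t_0$-eigenspace projectors, to peel off coordinates until a single basis vector remains; the parallel argument in $\{w_j\}$ rules out any "twisted" submodule aligned with that secondary basis. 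The main obstacle is the explicit construction of $\{w_j\}$ and the verification that its ladder analysis reproduces precisely the remaining two conditions; because the definition of $E(k_0,k_1,k_2,k_3)$ is asymmetric in $k_1$ versus $k_1^{-1}$ and in $k_3$ versus $k_3^{-1}$, locating the correct triangular change of basis requires a nontrivial calculation, and every other step reduces to a routine matrix check.
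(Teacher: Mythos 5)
The paper cites this theorem from \cite{Huang:DAHAmodule} (Theorem 5.8 there) and supplies no proof of its own, so there is no in-paper argument to compare against; I therefore evaluate your proposal on its own terms. Your first paragraph is correct and checkable directly from Proposition~\ref{prop:E}: in the basis $\{v_i\}$ the upward transition coefficients are always nonzero, the even-indexed downward coefficients carry the nonvanishing factor $(1-q^{2k})(1-q^{2k-d-1})$, and the odd-indexed downward coefficients in $t_2v_j$ and $t_3v_j$ share the factor $(k_0k_1k_3q^j-k_2)(k_0k_1k_3q^j-k_2^{-1})$, whose vanishing is equivalent to $k_0k_1k_2k_3=q^{-j}$ or $k_0k_1k_2^{-1}k_3=q^{-j}$; in that case $\operatorname{span}(v_j,\dots,v_d)$ is indeed invariant under all four $t_i$, which gives reducibility for two of the four conditions.

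The remainder, however, is a plan with two substantive gaps. First, the necessity of the two remaining conditions $k_0k_1^{-1}k_2k_3\neq q^{-i}$ and $k_0k_1k_2k_3^{-1}\neq q^{-i}$ rests entirely on an auxiliary basis $\{w_j\}$ that you describe only by a slogan (``interchange $k_1\leftrightarrow k_1^{-1}$'') and whose construction you yourself call a ``nontrivial calculation'' that you have not carried out. That construction is the content of this half of the theorem, not a bookkeeping step: one must actually exhibit the triangular substitution, recompute the action of all four generators on $\{w_j\}$, and verify that the odd-indexed downward factor becomes $(k_0k_1^{-1}k_3q^j-k_2)(k_0k_1^{-1}k_3q^j-k_2^{-1})$ (and similarly for $k_3\mapsto k_3^{-1}$); until then nothing is proved. (One should also check that the resulting ``extra'' vanishing conditions $k_0k_1^{-1}k_2^{-1}k_3=q^{-j}$ and $k_0k_1k_2^{-1}k_3^{-1}=q^{-j}$ reduce, via $k_0^2=q^{-d-1}$, to the stated ones with $i\mapsto d+1-i$; you do not address this.) Second, the sufficiency direction is an outline, not a proof: ``iteratively apply polynomials in the generators \dots to peel off coordinates until a single basis vector remains'' names neither the polynomials nor a reason the process terminates on a coordinate vector, and the hypotheses of the theorem do not force simple spectrum for $t_0t_1$ or $t_3t_0$ (cf.\ Lemma~\ref{lem:t1t0_diag} and Example~\ref{exam:b_E}), so one cannot simply project a nonzero element of a submodule onto a one-dimensional joint eigenspace. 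The closing parenthetical that ``the parallel argument in $\{w_j\}$ rules out any twisted submodule'' is an assertion without an argument, since a proper submodule need not be a coordinate subspace in either basis. Both gaps would need concrete arguments before the proposal could stand as a proof.
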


Let $\Z$ denote the additive group of integers. Recall that $\Z/4\Z$ is isomorphic to the cyclic group of order four.
Observe that there exists a unique $\Z/4\Z$-action on $\H_q$ such that each element of $\Z/4\Z$ acts on $\H_q$ as an $\F$-algebra automorphism in the following way:

\begin{table}[H]
\centering
\extrarowheight=3pt
\begin{tabular}{c|rrrr}
$\e\in \Z/4\Z$  &$t_0$ &$t_1$ &$t_2$ &$t_3$ 
\\

\midrule[1pt]

${0\pmod 4}$ &$t_0$  &$t_1$ &$t_2$ &$t_3$ 
\\
${1\pmod 4}$ &$t_1$ &$t_2$ &$t_3$ &$t_0$ 
\\
${2\pmod 4}$ &$t_2$ &$t_3$ &$t_0$ &$t_1$
\\
${3\pmod 4}$ &$t_3$ &$t_0$ &$t_1$ &$t_2$
\end{tabular}
\caption{The $\Z/4\Z$-action on $\H_q$}\label{Z/4Z-action}
\end{table}

Let $V$ denote an $\H_q$-module. For any $\F$-algebra automorphism $\e$ of $\H_q$ the notation 
$
V^\e
$ 
stands for the $\H_q$-module obtained by twisting the $\H_q$-module $V$ via $\e$.

\begin{thm}
[Theorem 6.1, \cite{Huang:DAHAmodule}]
\label{thm:onto_E}
If $V$ is a $(d+1)$-dimensional irreducible $\H_q$-module, then there exist an element $\e\in \Z/4\Z$ and nonzero scalars $k_0,k_1,k_2,k_3\in \F$ with $k_0^2=q^{-d-1}$ such that the $\H_q$-module $E(k_0,k_1,k_2,k_3)^\e$ is isomorphic to $V$.
\end{thm}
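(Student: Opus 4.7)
The plan is to extract the central character of $V$, use the $\Z/4\Z$-twist to put it into a canonical form, and then construct an explicit isomorphism with a module $E(k_0,k_1,k_2,k_3)$ from Proposition \ref{prop:E}.

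First, since $V$ is irreducible and each $c_i=t_i+t_i^{-1}$ lies in the center of $\H_q$, each $c_i$ acts on $V$ as a scalar $\chi_i\in\F$. Because $\F$ is algebraically closed, I can solve $\chi_i=k_i+k_i^{-1}$ for some $k_i\in\F^\times$, unique up to the involution $k_i\mapsto k_i^{-1}$. The quadratic relation $(t_i-k_i)(t_i-k_i^{-1})=0$ then forces every eigenvalue of $t_i$ on $V$ to lie in $\{k_i,k_i^{-1}\}$, and the defining relation $t_0t_1t_2t_3=q^{-1}$ couples the four scalars $k_0,k_1,k_2,k_3$.

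Second, I would show that one of $k_0^2,k_1^2,k_2^2,k_3^2$ equals $q^{-d-1}$. The $\Z/4\Z$-action of Table \ref{Z/4Z-action} cyclically permutes the generators $t_0,t_1,t_2,t_3$ and preserves both dimension and irreducibility, so passing from $V$ to a suitable twist $V^\e$ I may assume the distinguished index is $0$. To prove that such an index exists, I would analyze how the quadratic constraint on each $t_i$ combines with the product relation inside an irreducible module of fixed dimension $d+1$; the length of the eigenvalue chain produced by iterating $t_1t_0$ (or $t_3t_0$) on a generator is governed by $q^{d+1}$, and finiteness of this chain forces some $k_j^2=q^{-d-1}$.

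Third, after normalizing so that $k_0^2=q^{-d-1}$, I pick a nonzero eigenvector $v_0\in V$ with $t_0v_0=k_0v_0$ and inductively define $v_1,\ldots,v_d$ by alternating applications of $t_1$ and $t_3$ on the previously constructed vector, using the formulas of Proposition \ref{prop:E}(i) as a template. The quadratic relations for $t_1$ and $t_3$ guarantee that the recursion closes, and the relation $t_0t_1t_2t_3=q^{-1}$ together with the central characters determine the action of $t_0$ and $t_2$ on each $v_i$. By construction the $\F$-span of $\{v_i\}_{i=0}^d$ is $\H_q$-invariant, so irreducibility of $V$ identifies it with $V$; comparing against Proposition \ref{prop:E}(i) yields the desired isomorphism $V\cong E(k_0,k_1,k_2,k_3)^\e$.

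The chief difficulty lies in the second step: proving that the dimension $d+1$ forces $k_j^2=q^{-d-1}$ for some $j\in\{0,1,2,3\}$. The other parts are essentially formal manipulations, but this step is where the interplay between the quadratic relations, the product relation, the irreducibility of $V$, and the prescribed dimension must be tracked simultaneously, and it demands a delicate case analysis. A secondary bookkeeping issue is the ambiguity $k_i\leftrightarrow k_i^{-1}$, which has to be resolved coherently with $t_0t_1t_2t_3=q^{-1}$ so that the chosen parameters actually label the correct module in the family.
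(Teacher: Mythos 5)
This theorem is cited from \cite{Huang:DAHAmodule} (it is Theorem 6.1 there) and the present paper contains no proof of it, so there is no ``paper's own proof'' to compare against; I will therefore evaluate your sketch on its own merits.

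Your outline identifies the right ingredients, but the centerpiece is missing. You yourself flag the crucial claim --- that after a cyclic twist, one of the central scalars $\chi_j$ can be written as $k_j + k_j^{-1}$ with $k_j^2 = q^{-d-1}$ --- as ``the chief difficulty,'' and then gesture at it with a single vague sentence about ``the length of the eigenvalue chain produced by iterating $t_1t_0$.'' That is not a proof; it is a statement of intent. Any scalar $\chi$ factors as $k + k^{-1}$, but the constraint $k^2 = q^{-d-1}$ is highly nontrivial and is exactly where the DAHA structure (a raising/lowering chain for $t_1t_0$, or equivalently a nilpotency argument on $t_0t_1 - t_1t_0$ type commutators, together with $\dim V = d+1$) has to be deployed. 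Without this, the entire argument collapses, since the existence of the distinguished index is what lets you set up an $E$-model at all.

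Your step 3 also has gaps you do not acknowledge. Before you can ``pick a nonzero eigenvector $v_0$ with $t_0 v_0 = k_0 v_0$,'' you must show such an eigenvector exists \emph{and} that it can be chosen to be a simultaneous $k_3$-eigenvector of $t_3$ (compare Proposition \ref{prop:E}(i), where $t_0 v_0 = k_0 v_0$ \emph{and} $t_3 v_0 = k_3 v_0$). Since $t_0$ and $t_3$ do not commute, a common eigenvector is not automatic; it has to come from a highest-weight-type argument using the same iterated-operator analysis you are postponing. Likewise, once the chain $\{v_i\}$ is built, you must verify linear independence, that it spans $V$, and that \emph{all four} generators act by the formulas of Proposition \ref{prop:E}(i) on \emph{all} basis vectors --- this is substantial bookkeeping, not ``essentially formal manipulation.'' Finally, the $k_i \leftrightarrow k_i^{-1}$ ambiguity you raise is actually harmless for $i=1,2,3$ by Theorem \ref{thm:iso_E}, and for $i=0$ the constraint $k_0^2 = q^{-d-1}$ (with $q$ not a root of unity) already pins down $k_0$ uniquely between the two roots; so that is a non-issue, but the others are genuine.

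In short: the skeleton is plausible, but the load-bearing step is left entirely unproved, and several secondary claims in step 3 are asserted without justification. As it stands this is a plan for a proof, not a proof.
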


We recall a result concerning the isomorphism class of $E(k_0,k_1,k_2,k_3)$.

\begin{thm}
[Theorem 2.2, \cite{Huang:DAHAmodule}]
\label{thm:iso_E}
If the $\H_q$-module $E(k_0,k_1,k_2,k_3)$ is irreducible then the following hold:
\begin{enumerate}
\item the $\H_q$-module $E(k_0,k_1,k_2,k_3)$ is isomorphic to $E(k_0,k_1^{-1},k_2,k_3)$.

\item the $\H_q$-module $E(k_0,k_1,k_2,k_3)$ is isomorphic to $E(k_0,k_1,k_2^{-1},k_3)$.

\item the $\H_q$-module $E(k_0,k_1,k_2,k_3)$ is isomorphic to $E(k_0,k_1,k_2,k_3^{-1})$.
\end{enumerate}
\end{thm}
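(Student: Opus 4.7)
The plan is to construct, in each of the three cases, an explicit $\H_q$-module isomorphism. Note first that inverting any of $k_1, k_2, k_3$ preserves the central character, since $c_i = t_i+t_i^{-1}$ acts as $k_i+k_i^{-1} = k_i^{-1}+(k_i^{-1})^{-1}$, and by Theorem \ref{thm:irr_E} it also preserves the irreducibility hypothesis, because inverting a single $k_i$ just permutes the four forbidden products. Hence the modules on the right-hand sides of (i)--(iii) are themselves irreducible $(d+1)$-dimensional $\H_q$-modules with the same central character as $V = E(k_0,k_1,k_2,k_3)$.

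Focus on (i); the other two cases will be handled in parallel. Write $V = E(k_0,k_1,k_2,k_3)$ with basis $\{v_i\}_{i=0}^d$ and $V' = E(k_0,k_1^{-1},k_2,k_3)$ with basis $\{v_i'\}_{i=0}^d$, both supplied by Proposition \ref{prop:E}. The strategy is to build a new basis $\{w_i\}_{i=0}^d$ of $V$ in which the generators $t_0,t_1,t_2,t_3$ act by exactly the matrices describing their action on $\{v_i'\}$; the linear map $\phi: V \to V'$ determined by $\phi(w_i) = v_i'$ will then be an $\H_q$-module isomorphism.

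To construct $\{w_i\}$ I exploit that $t_1$ satisfies $(t_1-k_1)(t_1-k_1^{-1}) = 0$ on $V$, so $t_1$ is diagonalizable with eigenvalues $k_1$ and $k_1^{-1}$. In the basis $\{v_i\}$ of $V$ the odd-indexed vectors $v_1, v_3, \ldots, v_d$ are $k_1^{-1}$-eigenvectors of $t_1$, while in $\{v_i'\}$ the odd-indexed $v_i'$'s are $k_1$-eigenvectors. I would accordingly take $w_i = v_{i-1} + \mu_i v_i$ for odd $i$ with $\mu_i$ chosen so that $t_1 w_i = k_1 w_i$, and pin down the even-indexed $w_i$ by requiring that the $t_1$-matrix in the new basis replicates the shape of $t_1$ on $\{v_i'\}$, namely the formulas of Proposition \ref{prop:E}(i) with $k_1$ and $k_1^{-1}$ interchanged.

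The main obstacle is to verify that this change of basis, tailored to $t_1$, simultaneously brings the matrices of $t_0, t_2, t_3$ into the required form. This reduces to direct computation with the explicit formulas of Proposition \ref{prop:E}, and the relation $t_0 t_1 t_2 t_3 = q^{-1}$ together with the centrality of each $t_i+t_i^{-1}$ can be leveraged to cut down the number of independent matrix entries that need to be checked. Once (i) is established, the same recipe, with the role of $t_1$ replaced by $t_2$ or $t_3$ and using the corresponding diagonalization, yields (ii) and (iii) respectively.
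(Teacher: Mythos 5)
The theorem is cited from \cite{Huang:DAHAmodule} (Theorem 2.2 there) rather than proved in the present paper, so there is no ``paper's own proof'' to match against; the comparison below is on internal correctness.

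Your opening paragraph --- that inverting a single $k_i$ preserves both the central character and, by Theorem~\ref{thm:irr_E}, the irreducibility condition --- is correct and is the right preliminary observation. The overall strategy of producing an explicit basis $\{w_i\}$ of $E(k_0,k_1,k_2,k_3)$ on which $t_0,\dots,t_3$ act via the matrices of Proposition~\ref{prop:E}(i) with $k_1$ replaced by $k_1^{-1}$ is also a sound plan in principle.

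The concrete ansatz, however, has a genuine gap. You take $w_i = v_{i-1}+\mu_i v_i$ for odd $i$ and want $t_1 w_i = k_1 w_i$. For $i=1$ this works, but for odd $i\ge 3$ it cannot: from Proposition~\ref{prop:E}(i), for even $i-1\ge 2$ one has
\[
t_1 v_{i-1} \;=\; -k_1\,(1-q^{\,i-1})(1-k_0^2 q^{\,i-1})\,v_{i-2}\;+\;k_1 v_{i-1}\;+\;k_1^{-1} v_i,
\]
so $(t_1 - k_1)(v_{i-1}+\mu_i v_i)$ always contains the term $-k_1(1-q^{\,i-1})(1-k_0^2 q^{\,i-1})\,v_{i-2}$, and this coefficient is nonzero precisely because the irreducibility hypothesis (via $k_0^2 = q^{-d-1}$ and $q$ not a root of unity) forces $1-q^{\,i-1}\ne 0$ and $1-k_0^2 q^{\,i-1}\ne 0$ for $1\le i-1\le d$. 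A two-term combination $v_{i-1}+\mu_i v_i$ therefore is never a $k_1$-eigenvector when $i\ge 3$; the actual $k_1$-eigenvectors are full linear combinations of $v_0,\dots,v_i$ of both parities, obtained from a nontrivial recursion. Beyond this, even after the $k_1$-eigenbasis is fixed, the assertion that the matrices of $t_0,t_2,t_3$ ``simultaneously'' take the required form is left as an unverified claim; invoking $t_0t_1t_2t_3=q^{-1}$ and the centrality of $c_i$ does not by itself reduce that to a small check, since these relations constrain but do not determine the off-diagonal entries. Finally, because the formulas in Proposition~\ref{prop:E}(i) are not symmetric across $t_1,t_2,t_3$ (for instance $t_3$ acts as the scalar $k_3$ on even-indexed $v_i$, whereas $t_2$ is tridiagonal there), the closing remark that the ``same recipe'' gives (ii) and (iii) needs an independent argument, not a symmetry appeal.
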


Using Proposition \ref{prop:E}(i) a routine calculation yields the following lemma:

\begin{lem}
\label{lem:det_E}
The determinants of $t_0,t_1,t_2,t_3$ on $E(k_0,k_1,k_2,k_3)$ are $q^{-d-1},1,1,1$, respectively.
\end{lem}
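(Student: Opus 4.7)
The plan is to exploit the sparse structure of the matrices in Proposition~\ref{prop:E}(i) and compute each $\det(t_j)$ block by block, using either a short exact sequence or a direct-sum decomposition of $E(k_0,k_1,k_2,k_3)$ into invariant pieces of dimension one or two.

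For $t_1$, the odd-indexed basis vectors $v_1,v_3,\ldots,v_d$ are eigenvectors with eigenvalue $k_1^{-1}$, so $W_1:=\mathrm{span}\{v_1,v_3,\ldots,v_d\}$ is $t_1$-invariant with $t_1|_{W_1}=k_1^{-1}\,\mathrm{Id}$. Reading the action of $t_1$ on each even-indexed basis vector modulo $W_1$ (the off-diagonal terms land in odd-indexed vectors and thus drop out), one finds $t_1\bar{v}_i=k_1\bar{v}_i$ for $i=0,2,\ldots,d-1$, so $t_1$ acts on $V/W_1$ as $k_1\,\mathrm{Id}$. By multiplicativity of determinants along the short exact sequence,
\[
\det(t_1)=k_1^{-(d+1)/2}\cdot k_1^{(d+1)/2}=1.
\]
The analogous argument with $W_3:=\mathrm{span}\{v_0,v_2,\ldots,v_{d-1}\}$ (the $k_3$-eigenspace of $t_3$) yields $\det(t_3)=1$.

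For $t_2$ and $t_0$ I use instead a direct-sum decomposition into $2\times 2$ invariant blocks. For $t_2$, the subspaces $\mathrm{span}\{v_{2k},v_{2k+1}\}$ for $k=0,1,\ldots,(d-1)/2$ are $t_2$-invariant by inspection. For $t_0$, the vectors $v_0$ and $v_d$ are eigenvectors with eigenvalue $k_0$, contributing a factor $k_0^2$, and for each odd $i$ with $1\le i\le d-2$ the subspace $\mathrm{span}\{v_i,v_{i+1}\}$ is $t_0$-invariant. In both cases each $2\times 2$ block takes the form
\[
\begin{pmatrix}\alpha & u-\beta\\ -\alpha & \beta\end{pmatrix},
\]
where $u=k_0k_1k_3q^{2k+1}$ with $\alpha=u^{-1}$ and $\beta=k_2+k_2^{-1}-\alpha$ in the $t_2$ case, and $u=k_0q^{i+1}$ with $\alpha=u^{-1}$ and $\beta=k_0+k_0^{-1}-\alpha$ in the $t_0$ case. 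The $(1,2)$-entry is put into this shape using the elementary identity
\[
\frac{(u-k)(u-k^{-1})}{u}=u+u^{-1}-(k+k^{-1}),
\]
applied with $k=k_2$ (resp.\ $k=k_0$). Since the determinant of such a block equals $\alpha\beta+\alpha(u-\beta)=\alpha u=1$, every $2\times 2$ block contributes a factor $1$, and multiplying gives $\det(t_2)=1$ and $\det(t_0)=k_0^{2}=q^{-d-1}$.

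The only real work is the cancellation provided by the displayed identity, which collapses each $2\times 2$ block determinant to $1$; everything else is bookkeeping against the parities in Proposition~\ref{prop:E}(i). The main obstacle, then, is simply recognizing and carrying out that cancellation uniformly in the two cases $t_0$ and $t_2$.
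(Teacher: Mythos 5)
Your proof is correct, and it carries out in full what the paper dismisses as ``a routine calculation'' following Proposition~\ref{prop:E}(i), so there is no alternative argument in the paper to compare against. A few remarks on the verification. For $t_1$ and $t_3$ the invariant-subspace/quotient argument is sound: the odd-indexed $v_i$ span the $k_1^{-1}$-eigenspace of $t_1$ (of dimension $(d+1)/2$ since $d$ is odd), and the off-diagonal terms in $t_1 v_i$ for even $i$ land in odd indices, so the quotient is scalar $k_1$; the two contributions cancel. The mirror argument for $t_3$ works identically with the roles of even and odd swapped. For $t_0$ and $t_2$, the claimed $2\times 2$-block decomposition is correct: for $t_2$ the blocks $\mathrm{span}\{v_{2k},v_{2k+1}\}$ are invariant by direct inspection of the parities in Proposition~\ref{prop:E}(i), and for $t_0$ the same is true for $\mathrm{span}\{v_i,v_{i+1}\}$ with $i$ odd, leaving $v_0$ and $v_d$ as $k_0$-eigenvectors. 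The identity $(u-k)(u-k^{-1})/u = u+u^{-1}-(k+k^{-1})$ does indeed put the off-diagonal entry in the form $u-\beta$, and the block determinant then collapses to $\alpha u = 1$ as you compute. The count of blocks (two $1\times 1$ blocks plus $(d-1)/2$ two-by-two blocks for $t_0$; $(d+1)/2$ two-by-two blocks for $t_2$) accounts for all $d+1$ dimensions, so the conclusions $\det t_0 = k_0^2 = q^{-d-1}$ and $\det t_1 = \det t_2 = \det t_3 = 1$ follow.
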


We develop the following discriminant to determine the element $\e\in \Z/4\Z$ and the nonzero scalars $k_0,k_1,k_2,k_3\in \F$ in Theorem \ref{thm:onto_E}.

\begin{thm}
\label{thm:onto2_E}
Suppose that $V$ is a $(d+1)$-dimensional irreducible $\H_q$-module. For any $\e\in \Z/4\Z$ and any nonzero scalars $k_0,k_1,k_2,k_3$ with $k_0^2=q^{-d-1}$, the following are equivalent:
\begin{enumerate}
\item The $\H_q$-module $E(k_0,k_1,k_2,k_3)^\e$ is isomorphic to $V$. 

\item The determinant of $t_0$  on $V^{-\e}$ is equal to $q^{-d-1}$ and $c_0,c_1,c_2,c_3$ act on $V^{-\e}$ as scalar multiplication by $k_0+k_0^{-1},k_1+k_1^{-1},k_2+k_2^{-1},k_3+k_3^{-1}$, respectively.
\end{enumerate}
\end{thm}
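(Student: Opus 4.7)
The plan is to recast both conditions in terms of the twisted module $V^{-\e}$, using that $(V^{-\e})^\e \cong V$: condition (i) is equivalent to $E(k_0,k_1,k_2,k_3) \cong V^{-\e}$ as $\H_q$-modules. With this reformulation, the direction (i) $\Rightarrow$ (ii) follows immediately from Proposition \ref{prop:E}(ii), which gives the central scalars $c_i \mapsto k_i + k_i^{-1}$ on $E(k_0,k_1,k_2,k_3)$, together with Lemma \ref{lem:det_E}, which gives $\det t_0 = q^{-d-1}$ on $E(k_0,k_1,k_2,k_3)$.

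For (ii) $\Rightarrow$ (i), I would first apply Theorem \ref{thm:onto_E} to the $(d+1)$-dimensional irreducible $\H_q$-module $V^{-\e}$ to obtain $\e' \in \Z/4\Z$ and nonzero scalars $k_0',k_1',k_2',k_3' \in \F$ with $(k_0')^2 = q^{-d-1}$ such that $E(k_0',k_1',k_2',k_3')^{\e'} \cong V^{-\e}$. By Table \ref{Z/4Z-action} the operator $t_0$ on $E(k_0',k_1',k_2',k_3')^{\e'}$ acts as $t_{\e'}$ does on $E(k_0',k_1',k_2',k_3')$, so by Lemma \ref{lem:det_E} its determinant on $V^{-\e}$ equals $q^{-d-1}$ when $\e' \equiv 0 \pmod{4}$ and equals $1$ otherwise. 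Since $q$ is not a root of unity, $q^{-d-1}\neq 1$, so the determinant hypothesis in (ii) forces $\e' \equiv 0 \pmod{4}$, yielding $V^{-\e} \cong E(k_0',k_1',k_2',k_3')$. Matching central characters via Proposition \ref{prop:E}(ii) then gives $k_i' \in \{k_i,k_i^{-1}\}$ for each $i=0,1,2,3$; the possibility $k_0' = k_0^{-1}$ would combine with $(k_0')^2 = q^{-d-1} = k_0^2$ to force $k_0^4 = 1$, hence $q^{-2d-2}=1$, again contradicting the hypothesis on $q$, so $k_0' = k_0$. The remaining sign ambiguities in $k_1',k_2',k_3'$ are absorbed by Theorem \ref{thm:iso_E} (which applies because $E(k_0',k_1',k_2',k_3')$ is irreducible), yielding $E(k_0',k_1',k_2',k_3') \cong E(k_0,k_1,k_2,k_3)$, and hence (i).

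The main obstacle lies in the determinant step ruling out $\e' \not\equiv 0 \pmod{4}$: the argument exploits that among the four generators $t_i$, only $t_0$ has nontrivial determinant $q^{-d-1}$ on $E(k_0',k_1',k_2',k_3')$, and it is essential here that this scalar is distinct from $1$, which is exactly where the standing hypothesis on $q$ is used. Everything else is routine bookkeeping: matching central characters via Proposition \ref{prop:E}(ii) and invoking Theorem \ref{thm:iso_E} to eliminate the sign ambiguities in $k_1',k_2',k_3'$.
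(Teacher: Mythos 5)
Your proposal is correct and follows essentially the same path as the paper's proof: the easy direction from Proposition \ref{prop:E}(ii) and Lemma \ref{lem:det_E}, then Theorem \ref{thm:onto_E} plus the determinant discriminant (exploiting $q^{-d-1}\neq 1$) to pin down the twist, central-character matching to get $k_i'\in\{k_i,k_i^{-1}\}$ with $k_0'=k_0$, and Theorem \ref{thm:iso_E} to absorb the remaining inversions. The only cosmetic difference is that you apply Theorem \ref{thm:onto_E} to $V^{-\e}$ (forcing $\e'\equiv 0$) whereas the paper applies it to $V$ (forcing $\e'=\e$); these are the same argument up to composing twists.
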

\begin{proof}
(i) $\Rightarrow$ (ii): By (i) the $\H_q$-module $E(k_0,k_1,k_2,k_3)$ is isomorphic to $V^{-\e}$. Hence (ii) follows by Proposition \ref{prop:E}(ii) and Lemma \ref{lem:det_E}.

(ii) $\Rightarrow$ (i): By Theorem \ref{thm:onto_E} there are an $\e'\in \Z/4\Z$ and nonzero $k_0',k_1',k_2',k_3'\in \F$ with $k_0'^{2}=q^{-d-1}$ such that the $\H_q$-module $E(k_0',k_1',k_2',k_3')$ is isomorphic to $V^{-\e'}$. By Lemma \ref{lem:det_E} and since $q$ is not a root of unity, it follows that $\e=\e'$.
It follows from Proposition \ref{prop:E}(ii) that $k_0'=k_0$ and $k_i'=k_i^{\pm 1}$ for all $i=1,2,3$. 
 Hence (i) follows by Theorem \ref{thm:iso_E}.
\end{proof}

\begin{lem}
\label{lem:t0t1_E}
The action of $t_0t_1$ on $E(k_0,k_1,k_2,k_3)$ is as follows:
\begin{align*}
(1-k_0 k_1 q^{2\lceil \frac{i}{2}\rceil} (t_0 t_1)^{(-1)^{i-1}})
v_i
=\left\{
\begin{array}{ll}
v_{i+1} 
\qquad 
&\hbox{for $i=0,1,\ldots,d-1$},
\\
0
\qquad 
&\hbox{for $i=d$}.
\end{array}
\right.
\end{align*}
\end{lem}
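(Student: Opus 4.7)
The plan is to verify the claimed identities by direct computation using the action of $t_0$ and $t_1$ on the basis $\{v_i\}_{i=0}^d$ given in Proposition \ref{prop:E}(i). Since the exponent $(-1)^{i-1}$ equals $-1$ when $i$ is even and $+1$ when $i$ is odd, the argument naturally splits into two parity cases. A key preliminary is that by Proposition \ref{prop:E}(ii) we have $t_0+t_0^{-1}=k_0+k_0^{-1}$ and $t_1+t_1^{-1}=k_1+k_1^{-1}$ on $E(k_0,k_1,k_2,k_3)$, which allows us to express $t_0^{-1}$ and $t_1^{-1}$ in terms of $t_0, t_1$ and the central scalars.

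First I would handle the odd case, which is the easier one. For odd $i$ with $1\le i\le d-2$, the formulas give $t_1v_i=k_1^{-1}v_i$ and $t_0v_i=k_0^{-1}q^{-i-1}(v_i-v_{i+1})$, so $t_0t_1v_i=k_0^{-1}k_1^{-1}q^{-i-1}(v_i-v_{i+1})$. Since $\lceil i/2\rceil=(i+1)/2$ for odd $i$, multiplying by $k_0k_1q^{i+1}$ returns $v_i-v_{i+1}$, and subtracting from $v_i$ yields $v_{i+1}$. For the boundary case $i=d$ (which is odd), $t_0v_d=k_0v_d$ and $t_1v_d=k_1^{-1}v_d$ give $t_0t_1v_d=k_0k_1^{-1}v_d$; the hypothesis $k_0^2=q^{-d-1}$ then forces $k_0k_1q^{d+1}t_0t_1v_d=k_0^2q^{d+1}v_d=v_d$, so $(1-k_0k_1q^{d+1}t_0t_1)v_d=0$, as required.

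For the even case, one computes $(t_0t_1)^{-1}v_i=t_1^{-1}t_0^{-1}v_i$ by using the relations above. For $2\le i\le d-1$ even, the formulas of Proposition \ref{prop:E}(i) and the centrality of $t_0+t_0^{-1}$, $t_1+t_1^{-1}$ give $t_0^{-1}v_i=k_0^{-1}q^{-i}v_i-k_0^{-1}q^{-i}(1-q^i)(1-k_0^2q^i)v_{i-1}$ and $t_1^{-1}v_i=k_1(1-q^i)(1-k_0^2q^i)v_{i-1}+k_1^{-1}v_i-k_1^{-1}v_{i+1}$. Applying $t_1^{-1}$ to $t_0^{-1}v_i$ and using $t_1^{-1}v_{i-1}=k_1v_{i-1}$ (since $i-1$ is odd), the two contributions to $v_{i-1}$ cancel exactly, leaving $(t_0t_1)^{-1}v_i=k_0^{-1}k_1^{-1}q^{-i}(v_i-v_{i+1})$. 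Since $\lceil i/2\rceil=i/2$, multiplying by $k_0k_1q^i$ and subtracting from $v_i$ gives $v_{i+1}$. The base case $i=0$ is treated similarly: from $t_0v_0=k_0v_0$ and $t_1v_0=k_1v_0+k_1^{-1}v_1$ one gets $(t_0t_1)^{-1}v_0=k_0^{-1}k_1^{-1}(v_0-v_1)$, and multiplying by $k_0k_1$ recovers $v_1$.

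The argument is almost entirely mechanical; the only subtle point is verifying that the off-diagonal contributions to $v_{i-1}$ cancel when forming $(t_0t_1)^{-1}v_i$ for even $i$. This cancellation hinges on the common factor $(1-q^i)(1-k_0^2q^i)$ appearing in both the subdiagonal entry of $t_0$ and the subdiagonal entry of $t_1$ on even-indexed basis vectors, so the match is structural rather than accidental. Beyond this bookkeeping, no conceptual obstacle is anticipated.
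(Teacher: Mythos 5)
Your proof is correct and follows exactly the same route as the paper, which disposes of the lemma with the one-line instruction ``Apply Proposition~\ref{prop:E} to evaluate the action of $t_0t_1$'' — a direct computation on the basis $\{v_i\}_{i=0}^d$, split by parity, using the central relations $t_j+t_j^{-1}=k_j+k_j^{-1}$ to handle the inverses. Your explicit verification of the cancellation of the $v_{i-1}$ contributions in the even case, and of the $i=d$ boundary via $k_0^2=q^{-d-1}$, is precisely the bookkeeping the paper leaves to the reader.
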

\begin{proof}
Apply Proposition \ref{prop:E} to evaluate the action of $t_0t_1$ on $E(k_0,k_1,k_2,k_3)$. 
\end{proof}

\begin{lem}\label{lem:t0t2_E}
If the $\H_q$-module $E(k_0,k_1,k_2,k_3)$ is irreducible then the following hold:
\begin{enumerate}
\item There exists an $\F$-basis $\{w_i\}_{i=0}^d$ for $E(k_0,k_1,k_2,k_3)$ such that 
\begin{align*}
(1-k_0 k_2 q^{2\lceil \frac{i}{2}\rceil} (t_2 t_0)^{(-1)^{i-1}})
w_i
=\left\{
\begin{array}{ll}
w_{i+1} 
\qquad 
&\hbox{for $i=0,1,\ldots,d-1$},
\\
0
\qquad 
&\hbox{for $i=d$}.
\end{array}
\right.
\end{align*}

\item There exists an $\F$-basis $\{w_i\}_{i=0}^d$ for  $E(k_0,k_1,k_2,k_3)$ such that 
\begin{align*}
(1-k_0 k_3 q^{2\lceil \frac{i}{2}\rceil} (t_3 t_0)^{(-1)^{i-1}})
w_i
=\left\{
\begin{array}{ll}
w_{i+1} 
\qquad 
&\hbox{for $i=0,1,\ldots,d-1$},
\\
0
\qquad 
&\hbox{for $i=d$}.
\end{array}
\right.
\end{align*}

\item There exists an $\F$-basis $\{w_i\}_{i=0}^d$ for  $E(k_0,k_1,k_2,k_3)$ such that 
\begin{align*}
(1-k_0 k_2 q^{2\lfloor \frac{i}{2}\rfloor+1} (t_1 t_3)^{(-1)^{i}})
w_i
=\left\{
\begin{array}{ll}
w_{i+1} 
\qquad 
&\hbox{for $i=0,1,\ldots,d-1$},
\\
0
\qquad 
&\hbox{for $i=d$}.
\end{array}
\right.
\end{align*}
\end{enumerate}
\end{lem}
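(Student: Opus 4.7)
The plan is to construct, for each part (i), (ii), (iii), an explicit basis $\{w_i\}_{i=0}^d$ of $E(k_0,k_1,k_2,k_3)$ adapted to the relevant product operator ($t_2 t_0$, $t_3 t_0$, or $t_1 t_3$), and to verify the stated recurrence by direct computation with Proposition \ref{prop:E}, in parallel with the one-line proof of Lemma \ref{lem:t0t1_E}.

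A tempting first attempt is to reduce the lemma to Lemma \ref{lem:t0t1_E} via a twist by one of the $\Z/4\Z$-automorphisms of Table \ref{Z/4Z-action}. Set $\e=3\in \Z/4\Z$ and abbreviate $E=E(k_0,k_1,k_2,k_3)$; then $\e$ formally converts $t_0 t_1$ into $t_3 t_0$, so one would hope that Lemma \ref{lem:t0t1_E} applied to $E^{\e}$ produces part (ii). However, by Lemma \ref{lem:det_E} the twisted module satisfies $\det(t_0\,|\,E^{\e})=\det(t_3\,|\,E)=1$, which differs from $q^{-d-1}$ since $q$ is not a root of unity. By Theorem \ref{thm:onto2_E}, $E^{\e}$ is therefore not isomorphic to any $E(k_0',k_1',k_2',k_3')$ with $k_0'^2=q^{-d-1}$, so Lemma \ref{lem:t0t1_E} does not apply to $E^{\e}$ directly. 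A direct construction of $\{w_i\}$ therefore seems unavoidable.

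For part (ii): the boundary condition ``$=0$ for $i=d$'' (using that $d$ is odd together with $k_0^2=q^{-d-1}$) forces $w_d$ to be an eigenvector of $t_3 t_0$ with eigenvalue $k_0 k_3^{-1}$. Note that the canonical basis vector $v_0$ is already an eigenvector of $t_3 t_0$ with eigenvalue $k_0 k_3$ (since $t_0 v_0=k_0 v_0$ and $t_3 v_0=k_3 v_0$), which is precisely the eigenvalue annihilated by the lowering operator $(1-k_0 k_3(t_3 t_0)^{-1})$ at $i=0$; so setting $w_0:=v_0$ would give $w_1=0$. Hence $\{w_i\}$ must genuinely differ from $\{v_i\}$. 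I would use Proposition \ref{prop:E} to write an eigenvector of $t_3 t_0$ with eigenvalue $k_0 k_3^{-1}$ explicitly as a linear combination of a small number of $v_j$'s, define $w_d$ to be this vector, and then construct $w_{d-1},w_{d-2},\ldots,w_0$ by successively inverting the forward recurrence. The irreducibility of $E$ (Theorem \ref{thm:irr_E}) should guarantee that each inversion is well-defined and produces a nonzero vector, so that $\{w_i\}$ is a basis; the forward recurrence then holds by construction.

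Parts (i) and (iii) follow in the same spirit. The main obstacle is the explicit identification of $w_d$ (or any appropriate ``seed'') and the bookkeeping required to verify the recurrence in each case. I expect part (iii) to be the most delicate: $t_1 t_3$ is paired with the coefficient $k_0 k_2$, whose indices do not match those of the operator, suggesting that the DAHA relation $t_0 t_1 t_2 t_3=q^{-1}$ must be invoked crucially in the calculation.
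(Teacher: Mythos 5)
Your observation that the cyclic $\Z/4\Z$ twist cannot be combined directly with Lemma \ref{lem:t0t1_E} --- because it destroys the condition $\det(t_0)=q^{-d-1}$ --- is correct and is indeed the crucial obstacle. However, your conclusion that a direct construction of $\{w_i\}$ is therefore unavoidable is where you go wrong. The paper sidesteps the obstacle by using inner-twisted permutation automorphisms of $\H_q$ that are not among the four cyclic ones in Table \ref{Z/4Z-action}. For parts (i) and (ii) it takes automorphisms $\rho$ that \emph{fix} $t_0$; for example, for part (ii), $(t_0,t_1,t_2,t_3)\mapsto(t_0,\,t_0^{-1}t_3t_0,\,t_1t_2t_1^{-1},\,t_1)$, which one checks preserves $t_0t_1t_2t_3=q^{-1}$. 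Because $\rho(t_0)=t_0$, the twisted module $V^\rho$ still satisfies $\det(t_0)=q^{-d-1}$, and by Theorem \ref{thm:onto2_E} it is isomorphic to $E(k_0,k_3,k_2,k_1)$, so Lemma \ref{lem:t0t1_E} applies to $V^\rho$; moreover the action of $t_0t_1$ on $V^\rho$ equals the action of $\rho(t_0t_1)=t_3t_0$ on $V$, giving the stated recurrence. Part (i) is the same with $(t_0,t_1,t_2,t_3)\mapsto(t_0,\,t_0^{-1}t_2t_0,\,t_0^{-1}t_3t_0,\,t_1)$. For part (iii) the chosen automorphism sends $t_3\mapsto t_0$ rather than $t_0\mapsto t_0$, so $V^\rho$ is a $1\bmod 4$ twist of an $E(\cdot)$, and the relation $t_0t_1=(qt_2t_3)^{-1}$ is then invoked exactly as you anticipated.

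Beyond missing this route, your fallback direct construction has a concrete gap. Inverting the forward recurrence $(1-k_0k_3q^{2\lceil i/2\rceil}(t_3t_0)^{(-1)^{i-1}})\,w_i=w_{i+1}$ to recover $w_i$ from $w_{i+1}$ is not a well-defined step: for each $i\in\{0,\ldots,d\}$ the scalar $(k_0k_3q^{2\lceil i/2\rceil})^{(-1)^{i}}$ lies in the spectrum of $t_3t_0$ (this is precisely the list of eigenvalues that appears in the proof of Lemma \ref{lem:t3t0_diag}), so the operator $1-k_0k_3q^{2\lceil i/2\rceil}(t_3t_0)^{(-1)^{i-1}}$ has nontrivial kernel and is not surjective. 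You would have to prove solvability at every step, make a coherent choice of preimage, and then still establish linear independence of the resulting vectors --- none of which is addressed. The automorphism argument avoids all of this and is the intended proof.
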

\begin{proof}
Suppose the $\H_q$-module $V=E(k_0,k_1,k_2,k_3)$ is irreducible.

(i): By Definition \ref{defn:H} there exists a unique $\F$-algebra automorphism $\rho:\H_q\to \H_q$ given by 
\begin{eqnarray}\label{auto_t0t2}
(t_0,t_1,t_2,t_3) 
&\mapsto &
(t_0,t_0^{-1} t_2t_0,t_0^{-1} t_3t_0,t_1)
\end{eqnarray}
whose inverse sends 
$(t_0,t_1,t_2,t_3)$ to
$(t_0,t_3 ,t_0 t_1t_0^{-1},t_0 t_2t_0^{-1})$. By Proposition \ref{prop:E}(ii) the elements $c_0,c_1,c_2,c_3$ act on $V^\rho$ as scalar multiplication by 
$
k_0+k_0^{-1},k_2+k_2^{-1},k_3+k_3^{-1},k_1+k_1^{-1}$, 
respectively.
By Lemma \ref{lem:det_E} the determinant of $t_0$ on $V^\rho$ is $q^{-d-1}$. Therefore the $\H_q$-module $V^\rho$ is isomorphic to 
$
E(k_0,k_2,k_3,k_1)
$
by Theorem \ref{thm:onto2_E}. 
It follows from Lemma \ref{lem:t0t1_E} that there exists an $\F$-basis $\{w_i\}_{i=0}^d$ for $V^\rho$ such that 
\begin{align*}
(1-k_0 k_2 q^{2\lceil \frac{i}{2}\rceil} (t_0 t_1)^{(-1)^{i-1}})
w_i
=\left\{
\begin{array}{ll}
w_{i+1} 
\qquad 
&\hbox{for $i=0,1,\ldots,d-1$},
\\
0
\qquad 
&\hbox{for $i=d$}.
\end{array}
\right.
\end{align*}
It follows from (\ref{auto_t0t2}) that the action of $t_0t_1$ on $V^\rho$ is identical to the action of $t_2 t_0$ on $V$. Hence (i) follows.

(ii): By Definition \ref{defn:H} there exists a unique $\F$-algebra automorphism $\rho:\H_q\to \H_q$ given by 
\begin{eqnarray}\label{auto_t3t0}
(t_0,t_1,t_2,t_3) 
&\mapsto &
(t_0,t_0^{-1} t_3 t_0,t_1 t_2 t_1^{-1}, t_1)
\end{eqnarray}
whose inverse sends 
$(t_0,t_1,t_2,t_3)$ to
$(t_0,t_3,t_3^{-1} t_2 t_3,t_0 t_1 t_0^{-1})$. By Proposition \ref{prop:E}(ii) the elements $c_0,c_1,c_2,c_3$ act on $V^\rho$ as scalar multiplication by 
$
k_0+k_0^{-1},k_3+k_3^{-1},k_2+k_2^{-1},k_1+k_1^{-1}$, 
respectively.
By Lemma \ref{lem:det_E} the determinant of $t_0$ on $V^\rho$ is $q^{-d-1}$. Therefore the $\H_q$-module $V^\rho$ is isomorphic to 
$
E(k_0,k_3,k_2,k_1)
$
by Theorem \ref{thm:onto2_E}.
It follows from Lemma \ref{lem:t0t1_E} that there exists an $\F$-basis $\{w_i\}_{i=0}^d$ for $V^\rho$ such that 
\begin{align*}
(1-k_0 k_3 q^{2\lceil \frac{i}{2}\rceil} (t_0 t_1)^{(-1)^{i-1}})
w_i
=\left\{
\begin{array}{ll}
w_{i+1} 
\qquad 
&\hbox{for $i=0,1,\ldots,d-1$},
\\
0
\qquad 
&\hbox{for $i=d$}.
\end{array}
\right.
\end{align*}
It follows from (\ref{auto_t3t0}) that the action of $t_0t_1$ on $V^\rho$ is identical to the action of $t_3 t_0$ on $V$. Hence (ii) follows.

(iii):  By Definition \ref{defn:H} there exists a unique $\F$-algebra automorphism $\rho:\H_q\to \H_q$ given by 
\begin{eqnarray}\label{auto_t1t3}
(t_0,t_1,t_2,t_3) 
&\mapsto &
(t_1 t_2 t_1^{-1},t_1,t_3,t_0)
\end{eqnarray}
whose inverse sends 
$(t_0,t_1,t_2,t_3)$ to
$(t_3,t_1,t_1^{-1} t_0 t_1,t_2)$. By Proposition \ref{prop:E}(ii) the elements $c_0,c_1,c_2,c_3$ act on $V^\rho$ as scalar multiplication by 
$
k_2+k_2^{-1},k_1+k_1^{-1},k_3+k_3^{-1},k_0+k_0^{-1}$, 
respectively.
By Lemma \ref{lem:det_E} the determinant of $t_3$ on $V^\rho$ is $q^{-d-1}$. Therefore the $\H_q$-module $V^\rho$ is isomorphic to 
$
E(k_0,k_2,k_1,k_3)^{1\bmod{4}}
$ 
by Theorem \ref{thm:onto2_E}. 
Let $\{w_i\}_{i=0}^d$ denote the basis for $E(k_0,k_2,k_1,k_3)$ from Proposition \ref{prop:E}(i).
Since $t_0 t_1=(q t_2 t_3)^{-1}$ it follows from Lemma \ref{lem:t0t1_E} that 
\begin{align*}
(1-k_0 k_2 q^{2\lfloor \frac{i}{2}\rfloor+1} (t_2 t_3)^{(-1)^{i}})
w_i
=\left\{
\begin{array}{ll}
w_{i+1} 
\qquad 
&\hbox{for $i=0,1,\ldots,d-1$},
\\
0
\qquad 
&\hbox{for $i=d$}.
\end{array}
\right.
\end{align*}
By Table \ref{Z/4Z-action} the action of $t_1 t_2$ on $E(k_0,k_2,k_1,k_3)^{1\bmod{4}}$ is identical to the action of $t_2 t_3$ on $E(k_0,k_2,k_1,k_3)$. 
By (\ref{auto_t1t3}) the action of $t_1t_2$ on $V^\rho$ is identical to the action of $t_1 t_3$ on $V$. By the above comments the statement (iii) follows.
\end{proof}

Using Lemmas \ref{lem:t0t1_E} and \ref{lem:t0t2_E} yields the following equivalent conditions for 
$t_it_0$ and $t_0 t_i$
as diagonalizable on $E(k_0,k_1,k_2,k_3)$ for $i\in\{1,2,3\}$. Additionally, we obtain the following sufficient conditions for 
$
t_i t_j+(t_j t_i)^{-1}
$
as diagonalizable on $E(k_0,k_1,k_2,k_3)$ for distinct $i,j\in \{1,2,3\}$.

\begin{lem}\label{lem:t1t0_diag}
If the $\H_q$-module $E(k_0,k_1,k_2,k_3)$ is irreducible the the following are equivalent:
\begin{enumerate}
\item $t_1 t_0$ and $t_0 t_1$ are diagonalizable on $E(k_0,k_1,k_2,k_3)$.

\item $t_1 t_0$ and $t_0 t_1$ are multiplicity-free on $E(k_0,k_1,k_2,k_3)$.

\item $k_1^2$ is not among $q^{d-1},q^{d-3},\ldots,q^{1-d}$.
\end{enumerate}
\end{lem}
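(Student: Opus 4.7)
The plan is to reduce to a single operator, compute its matrix explicitly from Lemma \ref{lem:t0t1_E}, read off eigenvalues to handle (ii)$\Leftrightarrow$(iii), and treat (i)$\Rightarrow$(ii) by an eigenvector analysis. Since $t_0(t_1 t_0)t_0^{-1} = t_0 t_1$ in $\H_q$, the operators $t_1 t_0$ and $t_0 t_1$ on $E(k_0,k_1,k_2,k_3)$ are conjugate, hence have identical Jordan type. It thus suffices to prove the three conditions are equivalent with ``$t_1 t_0$ and $t_0 t_1$'' replaced throughout by $Y := t_0 t_1$.

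First I would unpack Lemma \ref{lem:t0t1_E} on the basis $\{v_i\}_{i=0}^d$. The case $i=d$ yields $Yv_d = (k_0 k_1 q^{d+1})^{-1} v_d$; for odd $i<d$ the relation directly gives $Yv_i = \mu_i(v_i - v_{i+1})$ with $\mu_i := (k_0 k_1 q^{i+1})^{-1}$; for even $i$ the relation involves $Y^{-1}$, but applying $Y$ to both sides and substituting the odd formula (or the boundary case $i=d-1$) yields $Yv_i = \mu_i v_i + (\text{terms in } v_{i+1}, v_{i+2})$ with $\mu_i := k_0 k_1 q^i$. Consequently, the matrix of $Y$ in this basis is lower triangular with diagonal entries $\mu_{2k} = k_0 k_1 q^{2k}$ and $\mu_{2k+1} = (k_0 k_1 q^{2k+2})^{-1}$ for $k=0,1,\ldots,m$ (where $d = 2m+1$), and every subdiagonal entry $Y_{i+1,i}$ is nonzero.

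For (ii)$\Leftrightarrow$(iii), the eigenvalues of $Y$ counted with multiplicity are exactly $\{\mu_0, \ldots, \mu_d\}$. Since $q$ is not a root of unity, the $\mu_{2k}$ are pairwise distinct and so are the $\mu_{2k+1}$, leaving only mixed coincidences $\mu_{2k} = \mu_{2l+1}$. Such a coincidence is equivalent to $(k_0 k_1)^2 = q^{-2(k+l+1)}$, and using $k_0^2 = q^{-d-1}$ this reduces to $k_1^2 = q^{d-1-2(k+l)}$. As $k+l$ ranges over $\{0,1,\ldots,2m\}$, the exponent ranges exactly over $\{d-1, d-3, \ldots, 1-d\}$, matching condition (iii). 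The implication (ii)$\Rightarrow$(i) is immediate since any multiplicity-free operator is diagonalizable.

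For (i)$\Rightarrow$(ii) I argue the contrapositive: assume $\mu_a = \mu_b = \lambda$ for some $a<b$ and show $Y$ is not diagonalizable at $\lambda$. Writing a candidate eigenvector $w = \sum_{i=0}^d a_i v_i$ and equating coefficients of each $v_j$ in $(Y - \lambda I)w = 0$ yields a triangular system of linear recursions in the unknowns $a_i$. Minimality of $a$ forces $a_0 = \cdots = a_{a-1} = 0$ and leaves $a_a$ as a free parameter; the recursion then uniquely determines $a_{a+1}, \ldots, a_{b-1}$ from $a_a$; and the equation at index $b$ (where $\mu_b - \lambda = 0$) collapses to a compatibility condition, namely $a_{b-1}=0$ if $b$ is odd, or $a_{b-2}+a_{b-1}=0$ if $b$ is even. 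Unwinding the recursion symbolically, the compatibility quantity is a rational expression whose numerator is a nonzero product of scalars drawn from $\{\lambda, \mu_{a+1}, \ldots, \mu_{b-1}\}$ and whose denominator is a nonzero product of differences $\lambda - \mu_i$ for $a < i < b$; every factor is nonzero because $k_0, k_1 \ne 0$ and because no intermediate $\mu_i$ equals $\lambda$ by minimality of $a,b$. Hence compatibility fails, the only eigenvectors at $\lambda$ have leading index $b$, so the $\lambda$-eigenspace is one-dimensional whereas the algebraic multiplicity of $\lambda$ is at least two; therefore $Y$ is not diagonalizable. The main obstacle is managing this symbolic compatibility computation across the two parity sub-cases for $(a,b)$ and the degenerate adjacent case $b=a+1$, in which $a_a$ is itself forced to vanish already from the equation at index $b$; in every case the ultimately non-vanishing scalar closes the argument.
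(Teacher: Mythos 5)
Your overall strategy matches the paper's: reduce $t_1t_0$ to $t_0t_1$ by conjugation, extract the explicit lower-triangular form of $[t_0t_1]$ on the basis $\{v_i\}$ from Lemma~\ref{lem:t0t1_E}, read off the diagonal entries to settle (ii)$\Leftrightarrow$(iii), and exploit the triangular structure (with all subdiagonal entries nonzero) for (i)$\Rightarrow$(ii). Both arguments ultimately rest on the observation that this matrix is non-derogatory. Where you diverge is in how you cash in that observation. The paper shows directly that $v_0$ is a cyclic vector: it computes $(1-k_0k_1q^{2\lceil i/2\rceil}(t_0t_1)^{(-1)^{i-1}})v_j$ in closed form, verifies that every proper factor of the product $\prod_{i=0}^d(\cdots)$ applied to $v_0$ is nonzero (in fact equals $(-1)^{j+1}v_d$ plus lower terms), and concludes that the $t_0t_1$-annihilator of $v_0$ equals the characteristic polynomial; diagonalizability then forces the minimal (hence characteristic) polynomial to be squarefree. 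You instead argue the contrapositive by row-reducing the eigenvector system $(Y-\lambda)w=0$ and tracking a compatibility scalar at the second index $b$ with $\mu_b=\lambda$, showing each eigenspace is one-dimensional. Both are valid routes to ``non-derogatory''; the paper's is arguably cleaner because the partial products collapse to a simple expression, whereas your compatibility quantity needs a symbolic unwinding of a two-parity recursion that you sketch but do not fully pin down (the claim that the numerator is a nonzero product of $\lambda$'s and intermediate $\mu_i$'s is correct, as spot checks confirm, but deserves an explicit induction). A small streamlining available to you: once you have that $[Y]$ is lower triangular with nonzero subdiagonal, the $d\times d$ submatrix of $[Y]-\lambda I$ sitting in rows $1,\ldots,d$ and columns $0,\ldots,d-1$, combined with the forced vanishing $a_0=\cdots=a_{a-1}=0$, already caps the nullity at one without solving the recursion; or, equivalently, you can simply note that $v_0$ is cyclic for any such matrix and reuse the paper's minimal-polynomial argument.
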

\begin{proof}
Note that $t_1 t_0$ is similar to $t_0 t_1$ on $E(k_0,k_1,k_2,k_3)$.

(ii) $\Leftrightarrow$ (iii): 
By Lemma \ref{lem:t0t1_E} this characteristic polynomial of $t_0 t_1$ in $E(k_0,k_1,k_2,k_3)$ has the roots
\begin{gather}\label{eigen_t0t1_E}
k_0 k_1 q^i 
\qquad \hbox{for $i=0,2,\ldots,d-1$};
\qquad 
k_0^{-1} k_1^{-1} q^{-i-1} 
\qquad \hbox{for $i=1,3,\ldots,d$}.
\end{gather}
Since  $q$ is not a root of unity and $k_0^2=q^{-d-1}$, the scalars (\ref{eigen_t0t1_E}) are mutually distinct if and only if (iii) holds. Therefore (ii) and (iii) are equivalent.

(ii), (iii) $\Rightarrow$ (i): Trivial.

(i) $\Rightarrow$ (ii), (iii): By Lemma \ref{lem:t0t1_E} the product 
\begin{gather*}
\prod_{i=0}^d (1-k_0 k_1 q^{2\lceil \frac{i}{2}\rceil} (t_0 t_1)^{(-1)^{i-1}})
\end{gather*}
vanishes at $v_0$. Moreover $(1-k_0 k_1 q^{2\lceil \frac{i}{2}\rceil} (t_0 t_1)^{(-1)^{i-1}})v_j$ is equal to 
\begin{align*}
\left\{
\begin{array}{ll}
(1-q^{i-j})v_j+q^{i-j} v_{j+1}
\qquad 
&\hbox{if $i=j\bmod{2}$},
\\
(1-k_0^2 k_1^2 q^{i+j+1}) v_j
-q^{i-j-1}(v_{j+1}-v_{j+2})
\qquad 
&\hbox{if $i\not= j\bmod{2}$}
\end{array}
\right.
\end{align*}
for all $i,j=0,1,\ldots,d$, where $v_{d+1}=0$ and $v_{d+2}=0$.
Using the above result yields that
\begin{gather*}
\prod_{\substack{i=0 \\ i\not=j}}^d (1-k_0 k_1 q^{2\lceil \frac{i}{2}\rceil} (t_0 t_1)^{(-1)^{i-1}}) v_0\not=0
\end{gather*}
for all $j=0,1,\ldots,d$. More precisely it is equal to $(-1)^{j+1}v_d$ plus a linear combination of $v_0,v_1,\ldots,v_{d-1}$. Hence $t_0 t_1$-annihilator of $v_0$ is equal to the characteristic polynomial of $t_0 t_1$ in $E(k_0,k_1,k_2,k_3)$. Therefore (i) implies (ii) and (iii).
\end{proof}

\begin{lem}\label{lem:t2t0_diag}
If the $\H_q$-module $E(k_0,k_1,k_2,k_3)$ is irreducible the the following are equivalent:
\begin{enumerate}
\item $t_2 t_0$ and $t_0 t_2$ are diagonalizable on $E(k_0,k_1,k_2,k_3)$.

\item $t_2 t_0$ and $t_0 t_2$ are multiplicity-free on $E(k_0,k_1,k_2,k_3)$.

\item $k_2^2$ is not among $q^{d-1},q^{d-3},\ldots,q^{1-d}$.
\end{enumerate}
\end{lem}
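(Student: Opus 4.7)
The plan is to mirror the proof of Lemma~\ref{lem:t1t0_diag} verbatim, with Lemma~\ref{lem:t0t2_E}(i) playing the role of Lemma~\ref{lem:t0t1_E} and the basis $\{w_i\}_{i=0}^d$ produced by Lemma~\ref{lem:t0t2_E}(i) replacing $\{v_i\}_{i=0}^d$. I would begin by noting that $t_0 t_2 = t_0 (t_2 t_0) t_0^{-1}$, so $t_2 t_0$ and $t_0 t_2$ are similar on $E(k_0,k_1,k_2,k_3)$; in particular they share spectrum, characteristic polynomial, and diagonalizability, so it suffices to analyze $t_2 t_0$.

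For (ii)~$\Leftrightarrow$~(iii): reading Lemma~\ref{lem:t0t2_E}(i) in the same way that the proof of Lemma~\ref{lem:t1t0_diag} reads Lemma~\ref{lem:t0t1_E}, the eigenvalues of $t_2 t_0$ on $E(k_0,k_1,k_2,k_3)$ are
\begin{gather*}
k_0 k_2 q^i \quad (i=0,2,\ldots,d-1), \qquad k_0^{-1} k_2^{-1} q^{-i-1} \quad (i=1,3,\ldots,d).
\end{gather*}
Since $q$ is not a root of unity and $k_0^2=q^{-d-1}$, collisions inside either list are impossible, while a cross-collision $k_0 k_2 q^i = k_0^{-1} k_2^{-1} q^{-j-1}$ amounts to $k_2^2 = q^{d-i-j}$; as $(i,j)$ ranges over the admissible indices, $d-i-j$ sweeps out exactly the even integers from $1-d$ to $d-1$, giving the equivalence with (iii). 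The implication (ii)~$\Rightarrow$~(i) is trivial.

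For (i)~$\Rightarrow$~(iii), I would transcribe the $v_0$-annihilator computation of Lemma~\ref{lem:t1t0_diag} onto $w_0$. The parity case analysis that describes how $(1-k_0 k_2 q^{2\lceil i/2\rceil}(t_2 t_0)^{(-1)^{i-1}})$ acts on $w_j$ has the same form as its $t_0 t_1$ analogue, and shows that removing any single factor from the length-$(d+1)$ product and applying the result to $w_0$ leaves a vector with nonzero $w_d$-coordinate. Hence the $t_2 t_0$-annihilator of $w_0$ coincides with the characteristic polynomial of $t_2 t_0$, so diagonalizability forces this polynomial to be separable, i.e., (iii).

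The only substantive step is this parity bookkeeping, and since the recursion in Lemma~\ref{lem:t0t2_E}(i) is structurally identical to that of Lemma~\ref{lem:t0t1_E}, the calculation transfers without modification; the only real obstacle is to check that no step of the argument in Lemma~\ref{lem:t1t0_diag} secretly uses a property of $k_1$ beyond what Lemma~\ref{lem:t0t1_E} provides, which a direct inspection of the recursion confirms.
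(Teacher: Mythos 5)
Your proposal is correct and is exactly the paper's argument: the paper's proof of Lemma~\ref{lem:t2t0_diag} is the single sentence ``Applying Lemma~\ref{lem:t0t2_E}(i) the lemma follows by an argument similar to the proof of Lemma~\ref{lem:t1t0_diag}.'' Your expansion of that sentence is faithful in every detail, including the similarity of $t_2t_0$ and $t_0t_2$, the eigenvalue list and cross-collision analysis using $k_0^2=q^{-d-1}$, and the observation that the $w_0$-annihilator computation depends only on the one-step recursion of Lemma~\ref{lem:t0t2_E}(i) and hence transfers verbatim from Lemma~\ref{lem:t1t0_diag} with $k_1$ replaced by $k_2$.
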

\begin{proof}
Applying Lemma \ref{lem:t0t2_E}(i) the lemma follows by an argument similar to the proof of Lemma \ref{lem:t1t0_diag}.
\end{proof}

\begin{lem}\label{lem:t3t0_diag}
If the $\H_q$-module $E(k_0,k_1,k_2,k_3)$ is irreducible the the following are equivalent:
\begin{enumerate}
\item $t_3 t_0$ and $t_0 t_3$ are diagonalizable on $E(k_0,k_1,k_2,k_3)$.

\item $t_3 t_0$ and $t_0 t_3$ are multiplicity-free on $E(k_0,k_1,k_2,k_3)$.

\item $k_3^2$ is not among $q^{d-1},q^{d-3},\ldots,q^{1-d}$.
\end{enumerate}
\end{lem}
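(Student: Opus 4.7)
The plan is to imitate the argument for Lemma \ref{lem:t1t0_diag} (and its companion Lemma \ref{lem:t2t0_diag}), using Lemma \ref{lem:t0t2_E}(ii) in place of Lemma \ref{lem:t0t1_E}. First I would note that since $t_3$ is invertible, $t_3 t_0$ and $t_0 t_3 = t_3^{-1}(t_3 t_0)t_3$ are similar on $E(k_0,k_1,k_2,k_3)$, so it suffices to analyze $t_0 t_3$ (or equivalently $t_3 t_0$) throughout.

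The first main step is to identify the eigenvalues. By Lemma \ref{lem:t0t2_E}(ii) there is an $\F$-basis $\{w_i\}_{i=0}^d$ with respect to which $\bigl(1-k_0 k_3 q^{2\lceil i/2\rceil}(t_3 t_0)^{(-1)^{i-1}}\bigr)w_i=w_{i+1}$ for $i<d$ and $=0$ for $i=d$. Consequently the matrix of $t_3 t_0$ in this basis is upper triangular, and its diagonal entries (the eigenvalues, counted with algebraic multiplicity) are
\begin{gather*}
k_0 k_3 q^i \quad (i=0,2,\ldots,d-1),
\qquad
k_0^{-1} k_3^{-1} q^{-i-1} \quad (i=1,3,\ldots,d).
\end{gather*}
Using $k_0^2=q^{-d-1}$, a direct case-by-case comparison shows these $d+1$ scalars are mutually distinct precisely when $k_3^2 \notin \{q^{d-1},q^{d-3},\ldots,q^{1-d}\}$, establishing the equivalence (ii)$\Leftrightarrow$(iii). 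The implication (ii)$\Rightarrow$(i) is then immediate, since a multiplicity-free operator on a finite-dimensional space is diagonalizable.

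The remaining step, (i)$\Rightarrow$(ii), is the only mildly delicate one: I have to rule out the possibility that the characteristic polynomial has a repeated root while the operator is still diagonalizable. Following the pattern in the proof of Lemma \ref{lem:t1t0_diag}, I would show that the $t_0 t_3$-annihilator of $w_0$ coincides with the characteristic polynomial. Concretely, using the recursion in Lemma \ref{lem:t0t2_E}(ii), evaluate how each factor $\bigl(1-k_0 k_3 q^{2\lceil i/2\rceil}(t_3 t_0)^{(-1)^{i-1}}\bigr)$ acts on a generic $w_j$; the formula splits into two cases depending on the parity of $i-j$, analogous to the display in the proof of Lemma \ref{lem:t1t0_diag}. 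A straightforward induction then shows that for every $j\in\{0,1,\ldots,d\}$,
\begin{gather*}
\prod_{\substack{i=0 \\ i\neq j}}^{d} \bigl(1-k_0 k_3 q^{2\lceil i/2\rceil}(t_3 t_0)^{(-1)^{i-1}}\bigr)w_0 = (-1)^{j+1}w_d + (\text{lower terms in }w_0,\ldots,w_{d-1}),
\end{gather*}
which is nonzero. Hence the minimal polynomial of $t_0 t_3$ on the cyclic subspace generated by $w_0$ has degree $d+1$, forcing it to equal the characteristic polynomial. If $t_0 t_3$ were diagonalizable with a repeated eigenvalue, its minimal polynomial would have degree at most $d$, a contradiction. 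This yields (i)$\Rightarrow$(ii), closing the loop.

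The main obstacle, if any, is the bookkeeping in the alternating-parity recursion that drives the final inductive identity above; once the $i\equiv j\pmod 2$ versus $i\not\equiv j\pmod 2$ cases are written out as in the proof of Lemma \ref{lem:t1t0_diag}, the rest is routine. No new ingredients beyond Lemma \ref{lem:t0t2_E}(ii) and the hypothesis $k_0^2=q^{-d-1}$ are needed.
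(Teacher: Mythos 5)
Your proposal is correct and follows essentially the same route as the paper, which itself just says ``applying Lemma \ref{lem:t0t2_E}(ii) the lemma follows by an argument similar to the proof of Lemma \ref{lem:t1t0_diag}''; your eigenvalue list, the use of $k_0^2=q^{-d-1}$ to reduce (iii) to distinctness, and the cyclic-vector argument for (i)$\Rightarrow$(ii) are exactly the intended adaptation. One small slip: the basis $\{w_i\}$ from Lemma \ref{lem:t0t2_E}(ii) is cyclic for $t_3 t_0$ (not $t_0 t_3$), so the annihilator/minimal-polynomial argument should be phrased in terms of $t_3 t_0$, with the conclusion transferred to $t_0 t_3$ by similarity as you already noted at the outset.
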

\begin{proof}
Applying Lemma \ref{lem:t0t2_E}(ii) the lemma follows by an argument similar to the proof of Lemma \ref{lem:t1t0_diag}.
\end{proof}

\begin{lem}\label{lem:t1t3+t1t3inv_diag}
If the $\H_q$-module $E(k_0,k_1,k_2,k_3)$ is irreducible then the following hold:
\begin{enumerate}

\item If $k_1^2$ is not among $q^{d-3},q^{d-5},\ldots,q^{3-d}$, then 
$
t_2 t_3+(t_2 t_3)^{-1}
$
and 
$
t_3 t_2+(t_3 t_2)^{-1}
$ 
is diagonalizable on $E(k_0,k_1,k_2,k_3)$.

\item If $k_2^2$ is not among $q^{d-3},q^{d-5},\ldots,q^{3-d}$, then 
$
t_1 t_3+(t_1 t_3)^{-1}
$ 
and 
$
t_3 t_1+(t_3 t_1)^{-1}
$
is diagonalizable on $E(k_0,k_1,k_2,k_3)$.

\item If $k_3^2$ is not among $q^{d-3},q^{d-5},\ldots,q^{3-d}$, then 
$
t_1 t_2+(t_1 t_2)^{-1}
$
and  
$
t_2 t_1+(t_2 t_1)^{-1}
$
is diagonalizable on $E(k_0,k_1,k_2,k_3)$.

\end{enumerate}
\end{lem}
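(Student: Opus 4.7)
The plan is to treat all three parts by the same template: for each product $t_it_j$ in question, use the appropriate lemma from this section to identify a basis of $E(k_0,k_1,k_2,k_3)$ on which $t_it_j$ is upper triangular with diagonal entries that pair up as $\{a,a^{-1}\}$; then apply Cayley--Hamilton and the substitution $Y=X+X^{-1}$ to reduce the characteristic polynomial of degree $d+1$ to a polynomial of degree $(d+1)/2$ annihilating $t_it_j+(t_it_j)^{-1}$; and finally show that this reduced polynomial has distinct roots exactly under the stated hypothesis, which forces diagonalizability.

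For part (ii), I will invoke Lemma \ref{lem:t0t2_E}(iii). The given relations express $t_1t_3$ (and its inverse) as upper triangular in the basis $\{w_i\}_{i=0}^d$, and by reading off the diagonal entries alternately from the two cases $i=2k$ and $i=2k+1$, the eigenvalues pair up as $\{k_0k_2q^j,\;(k_0k_2q^j)^{-1}\}$ for $j=1,3,\ldots,d$. Thus the characteristic polynomial of $t_1t_3$ equals
\[
\prod_{j\in\{1,3,\ldots,d\}}(X-k_0k_2q^j)(X-(k_0k_2q^j)^{-1})=X^{(d+1)/2}\,g(X+X^{-1}),
\]
where $g(Y)=\prod_{j}\bigl(Y-(k_0k_2q^j+k_0^{-1}k_2^{-1}q^{-j})\bigr)$, by the identity $(X-a)(X-a^{-1})=X(Y-(a+a^{-1}))$. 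Cayley--Hamilton together with the invertibility of $t_1t_3$ then forces $g(t_1t_3+(t_1t_3)^{-1})=0$. A short computation shows that for distinct odd $j,j'\in\{1,\ldots,d\}$ the corresponding roots of $g$ coincide precisely when $k_2^2=q^{d+1-j-j'}$; as $j+j'$ ranges over $\{4,6,\ldots,2d-2\}$, the exponent $d+1-j-j'$ ranges over $\{d-3,d-5,\ldots,3-d\}$. So the hypothesis of (ii) is exactly the condition that $g$ has distinct roots, giving diagonalizability of $t_1t_3+(t_1t_3)^{-1}$. For $t_3t_1+(t_3t_1)^{-1}$, the identity $t_3t_1=t_3(t_1t_3)t_3^{-1}$ shows it is similar to $t_1t_3+(t_1t_3)^{-1}$, so diagonalizability is preserved.

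Parts (i) and (iii) follow from the same template once the relation $t_0t_1t_2t_3=q^{-1}$ is used to rewrite $t_2t_3=q^{-1}(t_0t_1)^{-1}$ and $t_1t_2=q^{-1}(t_3t_0)^{-1}$. Lemma \ref{lem:t0t1_E} handles (i) via the eigenvalues of $t_0t_1$, and Lemma \ref{lem:t0t2_E}(ii) handles (iii) via the eigenvalues of $t_3t_0$; in each case the eigenvalues of the product again pair into reciprocal pairs indexed by $j\in\{1,3,\ldots,d\}$ of the form $\{k_0k_\ell q^j,(k_0k_\ell q^j)^{-1}\}$ with $\ell=1$ and $\ell=3$ respectively, and the same Cayley--Hamilton / substitution argument applies. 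The step I expect to require the most care is the bookkeeping that matches the offending set $\{q^{d-3},\ldots,q^{3-d}\}$ in the hypothesis with the collision exponents $d+1-j-j'$ coming from the eigenvalue pairing, and the verification that in each case the similarity $t_jt_i=t_j(t_it_j)t_j^{-1}$ correctly transfers diagonalizability to the reversed product; once those ingredients are in place, the remaining manipulations are routine.
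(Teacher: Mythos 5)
Your argument is correct and reaches the conclusion by a genuinely different route than the paper. Let me spell out the difference.

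The paper's proof writes $D=t_2t_3+(t_2t_3)^{-1}=qt_0t_1+(qt_0t_1)^{-1}$ explicitly as a lower--triangular matrix $[D]$ in the basis $\{v_i\}$, observes that each diagonal entry $\theta_i$ appears exactly twice, and then argues eigenspace dimensions one by one: the extreme eigenvalues $\theta_0$ and $\theta_{(d-1)/2}$ have two-dimensional eigenspaces by rank--nullity and inspection of the first two rows / last two columns, while for the interior $\theta_i$ the paper passes to the matrix $q[t_0t_1]$, notes its interior diagonal entries $\vartheta_i^{\pm 1}$ are multiplicity-free, and concludes the $\theta_i$-eigenspace of $D$ picks up both eigenvectors of $qt_0t_1$. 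Your proof instead takes the annihilating Laurent polynomial $\prod_{j}(X-a_j)(X-a_j^{-1})$ of the invertible product, uses the identity $(X-a)(X-a^{-1})=X\bigl((X+X^{-1})-(a+a^{-1})\bigr)$ to pull out the factor $X^{(d+1)/2}$, and lands on a genuine polynomial $g$ of degree $(d+1)/2$ with $g(D)=0$; distinctness of the roots of $g$ under the stated hypothesis then gives diagonalizability in one stroke, with no case analysis on the extreme versus interior eigenvalues. This is cleaner: it sidesteps the rank--nullity bookkeeping and the argument about which diagonal entries of $qt_0t_1$ are multiplicity-free (indeed, the extreme ones $\vartheta_0,\vartheta_{(d-1)/2}$ need \emph{not} be multiplicity-free, which is why the paper has to handle $\theta_0,\theta_{(d-1)/2}$ separately; your reduction does not notice that distinction at all).

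Two small points of care. First, the relations in Lemmas \ref{lem:t0t1_E} and \ref{lem:t0t2_E} mix $t_it_j$ and its inverse (the exponent $(-1)^i$ alternates), so ``the diagonal entries pair up as $\{a,a^{-1}\}$'' requires the one-line computation that shows the product is in fact triangular in the basis $\{w_i\}$ (apply the operator to the odd-$i$ relation to isolate $t_it_j w_i$); alternatively, you can skip triangularity entirely by observing that the degree-$(d+1)$ operator $\prod_{i=0}^d\bigl(1-c_i(t_it_j)^{(-1)^i}\bigr)$ kills $w_0$ and commutes with every factor, hence kills the whole module, which gives you the annihilating polynomial directly. Second, for the passage from $t_it_j+(t_it_j)^{-1}$ to $t_jt_i+(t_jt_i)^{-1}$, your conjugation $t_jt_i=t_j(t_it_j)t_j^{-1}$ is fine; the paper instead cites the stronger identity $t_it_j+(t_it_j)^{-1}=t_jt_i+(t_jt_i)^{-1}$ in $\H_q$ from \cite[Lemma 3.8]{daha&Z3}, so the two elements are actually equal, not merely similar. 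Neither point affects correctness.
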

\begin{proof}
(i): Let $D=t_2 t_3+(t_2 t_3)^{-1}$. 
Note that $D$ is similar to $t_3 t_2+(t_3 t_2)^{-1}$ on $E(k_0,k_1,k_2,k_3)$. More precisely, 
$
t_i t_j+(t_i t_j)^{-1}=t_j t_i+(t_j t_i)^{-1}
$
for all $i,j\in\{0,1,2,3\}$
by \cite[Lemma 3.8]{daha&Z3}. 
Thus it suffices to show that $D$ is diagonalizable on $E(k_0,k_1,k_2,k_3)$.
Using (\ref{t0123}) yields that $t_2 t_3=(q t_0 t_1)^{-1}$. Hence 
\begin{gather*}
D=q t_0 t_1+(qt_0 t_1)^{-1}.
\end{gather*}
Given any element $X$ of $\H_q$, let $[X]$ denote the matrix representing $X$ with respect to $\{v_i\}_{i=0}^d$. 
Using Lemma \ref{lem:t0t1_E} a direct calculation yields that $[D]$ is a lower triangular matrix of the form
\begin{gather}\label{[D]}
\begin{pmatrix}
\theta_0 & & & & & &{\bf 0}
\\
0 &\theta_0
\\
  & &\theta_1
\\
  &  & &\theta_1
\\
  & & & &\ddots
\\
  & & & & &\theta_{\frac{d-1}{2}}
\\  
*  & & & & &0 &\theta_{\frac{d-1}{2}}
\end{pmatrix}
\end{gather}
where 
$$
\theta_i=k_0 k_1 q^{2i+1}+k_0^{-1} k_1^{-1} q^{-2i-1}
\qquad 
\hbox{for $i=0,1,\ldots,\frac{d-1}{2}$}.
$$

Since $q$ is not a root of unity and $k_1^2$ is not among $q^{3-d},q^{5-d},\ldots,q^{d-3}$, the scalars $\{\theta_i\}_{i=0}^{\frac{d-1}{2}}$ are mutually distinct. Hence the $\theta_i$-eigenspace of $D$ in $E(k_0,k_1,k_2,k_3)$ has dimension less than or equal to two for all $i=0,1,\ldots,\frac{d-1}{2}$. By (\ref{[D]}) the first two rows of $[D-\theta_0]$ are zero and the last two columns of $[D-\theta_{\frac{d-1}{2}}]$ are zero. By the rank-nullity theorem, the $\theta_i$-eigenspace of $D$ in $E(k_0,k_1,k_2,k_3)$ has dimension two for $i=0,\frac{d-1}{2}$.
To see the diagonalizability of $D$ it remains to show that the $\theta_i$-eigenspace of $D$ in $E(k_0,k_1,k_2,k_3)$ has dimension two for all $i=1,2,\ldots,\frac{d-3}{2}$.

By Lemma \ref{lem:t0t1_E} the matrix $q [t_0 t_1]$ is a lower triangular matrix of the form 
\begin{gather}\label{[t1t3]}
\begin{pmatrix}
\vartheta_0 & & & & & &{\bf 0}
\\
 &\vartheta_0^{-1}
\\
  & &\vartheta_1
\\
  &  & &\vartheta_1^{-1}
\\
  & & & &\ddots
\\
  & & & & &\vartheta_{\frac{d-1}{2}}
\\  
*  & & & & & &\vartheta_{\frac{d-1}{2}}^{-1}
\end{pmatrix}
\end{gather}
where 
$$
\vartheta_i=k_0 k_1 q^{2i+1}
\qquad 
\hbox{for $i=0,1,\ldots,\frac{d-1}{2}$}.
$$
Since $q$ is not a root of unity and $k_1^2$ is not among $q^{3-d},q^{5-d},\ldots,q^{d-3}$, the eigenvalues $\{\vartheta_i^{\pm 1}\}_{i=1}^\frac{d-3}{2}$ of $q t_0 t_1$ are multiplicity-free.
Hence the $\theta_i$-eigenspace of $D$ in $E(k_0,k_1,k_2,k_3)$ has dimension two for all $i=1,2,\ldots,\frac{d-3}{2}$. The statement (i) follows.

(ii): Using Lemma \ref{lem:t0t2_E}(iii) the statement (ii) follows by an argument similar to the part (i).

(iii): Using Lemma \ref{lem:t0t2_E}(ii) the statement (iii) follows by an argument similar to the part (i). 
\end{proof}

Recall the finite-dimensional irreducible $\triangle_q$-modules from \S\ref{s:AWmodule}. The composition factors of any $(d+1)$-dimensional irreducible $\H_q$-modules are classified as follows.

\begin{thm}
[\S 4.2--\S 4.5, \cite{Huang:AW&DAHAmodule}]
\label{thm:CF_even}
If the $\H_q$-module $E(k_0,k_1,k_2,k_3)$ is irreducible then the following hold:
\begin{enumerate}
\item If $d=1$ then the $\triangle_q$-module $E(k_0,k_1,k_2,k_3)$ is irreducible and it is isomorphic to 
\begin{align*}
V_1(k_0k_1q,k_0 k_3 q, k_0 k_2 q).
\end{align*}

\item If $d\geq 3$ then the factors of any composition series for the $\triangle_q$-module $E(k_0,k_1,k_2,k_3)$ are isomorphic to 
\begin{align*}
&V_\frac{d+1}{2}(k_0k_1q^{\frac{d+1}{2}},k_0 k_3 q^{\frac{d+1}{2}}, k_0 k_2 q^{\frac{d+1}{2}}),
\\
&V_\frac{d-3}{2}(k_0k_1q^{\frac{d+1}{2}},k_0 k_3 q^{\frac{d+1}{2}}, k_0 k_2 q^{\frac{d+1}{2}}).
\end{align*}

\item The factors of any composition series for the $\triangle_q$-module $E(k_0,k_1,k_2,k_3)^{1\bmod 4}$ are isomorphic to
\begin{align*}
&V_\frac{d-1}{2}(k_0 k_3 q^{\frac{d+1}{2}}, k_0k_1q^{\frac{d+3}{2}},k_0 k_2 q^{\frac{d+1}{2}}),
\\
&V_\frac{d-1}{2}(k_0 k_3 q^{\frac{d+1}{2}}, k_0k_1q^{\frac{d-1}{2}},k_0 k_2 q^{\frac{d+1}{2}}).
\end{align*}

\item The factors of any composition series for the $\triangle_q$-module $E(k_0,k_1,k_2,k_3)^{2\bmod 4}$ are isomorphic to
\begin{align*}
&V_\frac{d-1}{2}(k_0k_1q^{\frac{d+1}{2}},k_0 k_3 q^{\frac{d+1}{2}}, k_0 k_2 q^{\frac{d+3}{2}}),
\\
&V_\frac{d-1}{2}(k_0k_1q^{\frac{d+1}{2}},k_0 k_3 q^{\frac{d+1}{2}}, k_0 k_2 q^{\frac{d-1}{2}}).
\end{align*}

\item The factors of any composition series for the $\triangle_q$-module $E(k_0,k_1,k_2,k_3)^{3\bmod 4}$ are isomorphic to
\begin{align*}
&V_\frac{d-1}{2}(k_0 k_3 q^{\frac{d-1}{2}}, k_0k_1q^{\frac{d+1}{2}},k_0 k_2 q^{\frac{d+1}{2}}),
\\
&V_\frac{d-1}{2}(k_0 k_3 q^{\frac{d+3}{2}}, k_0k_1q^{\frac{d+1}{2}},k_0 k_2 q^{\frac{d+1}{2}}).
\end{align*}
\end{enumerate}
\end{thm}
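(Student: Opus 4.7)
The plan is to exploit the explicit basis $\{v_i\}_{i=0}^d$ from Proposition \ref{prop:E}(i) and the structural Lemmas \ref{lem:t0t1_E} and \ref{lem:t0t2_E} to compute the matrices of $A=t_1t_0+(t_1t_0)^{-1}$, $B=t_3t_0+(t_3t_0)^{-1}$, and $C=t_2t_0+(t_2t_0)^{-1}$ on $E:=E(k_0,k_1,k_2,k_3)$. The guiding observation is that the eigenvalues of $t_0t_1$ (and, by similarity, of $t_1t_0$) split by the parity of $i$ into the two families $k_0k_1q^{2\lceil i/2\rceil}$ and $(k_0k_1)^{-1}q^{-2\lceil i/2\rceil}$; the same parity-block structure passes to $B$ via Lemma \ref{lem:t0t2_E}(ii) and to $C$ via Lemma \ref{lem:t0t2_E}(i), and it underlies both the construction of a $\triangle_q$-invariant subspace and the identification of the composition factors.

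For case (ii) with $d\geq 3$, the plan is to exhibit an explicit $\triangle_q$-submodule $W\subset E$ of dimension $(d+3)/2$ as the span of a carefully chosen set of linear combinations of basis vectors (one per parity slot, adjusted so that $A,B,C$ preserve the span), with invariance verified directly from the computed matrices; the quotient $E/W$ is then $(d-1)/2$-dimensional. To pin down the isomorphism classes, I will compute the scalars by which the central elements $\alpha,\beta,\gamma$ of $\triangle_q$ act on $W$ and on $E/W$: by Proposition \ref{prop:UAWd}(ii), the dimension together with this triple of scalars determines the unordered sets $\{a,a^{-1}\},\{b,b^{-1}\},\{c,c^{-1}\}$ uniquely (since $q$ is not a root of unity). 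Solving the resulting system with the appropriate value of the dimension parameter in Proposition \ref{prop:UAWd} should yield the common triple $a=k_0k_1q^{(d+1)/2}$, $b=k_0k_3q^{(d+1)/2}$, $c=k_0k_2q^{(d+1)/2}$ (up to inversions), and irreducibility of each factor follows from Theorem \ref{thm:irr_UAW} combined with the inequalities that Theorem \ref{thm:irr_E} places on the $k_i$. Case (i) is the $d=1$ specialization, where $W=E$ is already irreducible and the same central-character matching produces the stated isomorphism.

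For the twisted cases (iii)--(v), observe that on $E^\e$ the operators $A,B,C$ correspond under Table \ref{Z/4Z-action} to $t_it_j+(t_it_j)^{-1}$ for various indices $(i,j)$ on $E$. Using the identity $t_it_j+(t_it_j)^{-1}=t_jt_i+(t_jt_i)^{-1}$ (invoked in the proof of Lemma \ref{lem:t1t3+t1t3inv_diag}) together with the relation $t_0t_1t_2t_3=q^{-1}$ and its cyclic consequences, each such combination reduces either to $q^{\pm 1}t_kt_0+q^{\mp 1}(t_kt_0)^{-1}$ or to $t_1t_3+(t_1t_3)^{-1}$, so that Lemmas \ref{lem:t0t1_E} and \ref{lem:t0t2_E} supply the requisite matrix actions. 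From there the same program applies: exhibit the $\triangle_q$-invariant subspace, compute its dimension, and identify each composition factor by its central character. The main obstacle I anticipate is locating the invariant subspace in the twisted cases: unlike case (ii), both composition factors share the common dimension $(d+1)/2$, so the decomposition is more symmetric and the submodule is not directly readable off the basis $\{v_i\}$; it will likely have to be described via an eigenspace decomposition of whichever of $A,B,C$ has the simplest action on $E^\e$. The parameter shifts in the exponents (e.g.\ $q^{(d\pm 1)/2}$ versus $q^{(d\pm 3)/2}$) also demand careful bookkeeping to be matched against Proposition \ref{prop:UAWd}(ii) for each choice of $\e$.
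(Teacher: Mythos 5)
The paper does not prove this theorem; it is imported verbatim from \cite{Huang:AW&DAHAmodule}, so there is no in-text proof to compare against. Assessed on its own terms, your plan has two substantive gaps. The first concerns how you locate the submodule $W$. You propose to build $W$ as ``the span of a carefully chosen set of linear combinations of basis vectors'' and check invariance by direct matrix computation, and you flag the twisted cases as the main obstacle. This overlooks the structural fact recalled in the introduction of this paper: the images of $A,B,C$ commute with $t_0$, since $t_0\bigl(t_it_0+(t_it_0)^{-1}\bigr)t_0^{-1}=t_0t_i+(t_0t_i)^{-1}=t_it_0+(t_it_0)^{-1}$ by the identity you already invoke from \cite[Lemma 3.8]{daha&Z3}. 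Hence the generalized eigenspaces of $t_0$ are automatically $\triangle_q$-submodules, and no bespoke span is needed. In the untwisted case $k_0^2=q^{-d-1}\neq 1$, so $t_0$ is diagonalizable with $k_0$-eigenspace of dimension $(d+3)/2$ and $k_0^{-1}$-eigenspace of dimension $(d-1)/2$ (this is visible from the block structure of the $t_0$-matrix in Proposition~\ref{prop:E}(i)), which gives the direct sum decomposition in cases (i)--(ii) immediately. For a twist $E^{\e}$ the same argument applies to whichever generator plays the role of $t_0$ under $\e$ (namely $t_1$, $t_2$, $t_3$ for $\e=1,2,3\bmod 4$), which resolves the obstacle you raised; when the corresponding $k_i^2=1$ that $t_i$ is not diagonalizable and one gets a non-split extension rather than a direct sum, but the eigenspace still gives the needed proper submodule.

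The second gap is the identification step. You claim that the dimension together with the central character $(\alpha,\beta,\gamma)$ determines $V_m(a,b,c)$ up to replacing each of $a,b,c$ by its inverse. Writing $x=a+a^{-1}$, $y=b+b^{-1}$, $z=c+c^{-1}$, $w=q^{m+1}+q^{-m-1}$, the system $\alpha=yz+xw$, $\beta=zx+yw$, $\gamma=xy+zw$ is a coupled quadratic system that can have more than one solution for $(x,y,z)$, so this step is not watertight without additional argument. A cleaner route, fully consistent with Proposition~\ref{prop:UAWd}(i) and with Lemmas~\ref{lem:t0t1_E} and~\ref{lem:t0t2_E}, is to read off the actual spectra of $A$ and $B$ on $W$ and on $E/W$: the eigenvalues of $A$ on $V_m(a,b,c)$ are $\{aq^{2i-m}+a^{-1}q^{m-2i}\}_{i=0}^m$, which pin down $a$ up to inversion because $q$ is not a root of unity, and similarly for $B$ and $b$; the parameter $c$ is then recovered from any one of $\alpha,\beta,\gamma$. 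This is the bookkeeping that actually produces the exponent shifts ($q^{(d\pm 1)/2}$ versus $q^{(d\pm 3)/2}$) appearing in cases (iii)--(v).
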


Applying Lemma \ref{lem:dia_UAW} to Theorem \ref{thm:CF_even} yields the following lemmas:

\begin{lem}\label{lem:diag_ABC_E}
If the $\H_q$-module $E(k_0,k_1,k_2,k_3)$ is irreducible then the following hold:
\begin{enumerate}
\item $A$ 
is multiplicity-free on all composition factors of the $\triangle_q$-module $E(k_0,k_1,k_2,k_3)$ if and only if $k_1^2$ is not among $q^{d-1},q^{d-3},\ldots,q^{1-d}$.

\item $B$
is multiplicity-free on all composition factors of the $\triangle_q$-module $E(k_0,k_1,k_2,k_3)$ if and only if $k_3^2$ is not among $q^{d-1},q^{d-3},\ldots,q^{1-d}$.

\item $C$
is multiplicity-free on all composition factors of the $\triangle_q$-module $E(k_0,k_1,k_2,k_3)$ if and only if $k_2^2$ is not among $q^{d-1},q^{d-3},\ldots,q^{1-d}$.
\end{enumerate}
\end{lem}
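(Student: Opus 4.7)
The three parts of the lemma are structurally identical, so the plan is to describe the approach for part (i) and indicate how (ii) and (iii) reduce to the same calculation. The strategy is a direct application: first invoke Theorem \ref{thm:CF_even} to identify the composition factors of the $\triangle_q$-module $E(k_0,k_1,k_2,k_3)$ as modules of the form $V_{d'}(a,b,c)$, then translate the multiplicity-free condition via Lemma \ref{lem:dia_UAW}, and finally simplify the resulting condition using the constraint $k_0^2 = q^{-d-1}$.

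For part (i) the plan is to split into the two cases provided by Theorem \ref{thm:CF_even}. When $d = 1$, there is a single composition factor $V_1(k_0 k_1 q, k_0 k_3 q, k_0 k_2 q)$, and by Lemma \ref{lem:dia_UAW} the operator $A$ is multiplicity-free on it exactly when $(k_0 k_1 q)^2 \neq 1$; using $k_0^2 = q^{-2}$ this collapses to $k_1^2 \neq 1$, which matches $\{q^{d-1},\ldots,q^{1-d}\} = \{q^0\}$. When $d \geq 3$, Theorem \ref{thm:CF_even}(ii) gives two composition factors of lengths $\frac{d+1}{2}+1$ and $\frac{d-3}{2}+1$, both with first parameter $a = k_0 k_1 q^{(d+1)/2}$. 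Substituting $k_0^2 = q^{-d-1}$ yields $a^2 = k_1^2$, so Lemma \ref{lem:dia_UAW} produces the condition
\[
k_1^2 \notin \{q^{d-1}, q^{d-3}, \ldots, q^{1-d}\} \cup \{q^{d-5}, q^{d-7}, \ldots, q^{5-d}\}.
\]
The second set is a proper subset of the first, so the union is simply the first set, giving exactly the condition claimed in (i).

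Parts (ii) and (iii) then proceed by the same template with the second and third parameters of the composition factors. From Theorem \ref{thm:CF_even}, these parameters are $b = k_0 k_3 q^{(d+1)/2}$ and $c = k_0 k_2 q^{(d+1)/2}$, so $b^2 = k_3^2$ and $c^2 = k_2^2$, and Lemma \ref{lem:dia_UAW} applied to $B$ and $C$ respectively yields the same exponent sets as in (i) but now for $k_3^2$ and $k_2^2$.

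There is no real obstacle: the argument is entirely bookkeeping. The only point requiring a moment of care is verifying that the exponent set arising from the smaller composition factor $V_{(d-3)/2}(\cdots)$ is contained in the exponent set arising from the larger one $V_{(d+1)/2}(\cdots)$, so that the union in the displayed condition above collapses cleanly. Once this containment is recorded, the biconditionals in (i)--(iii) follow immediately from Lemma \ref{lem:dia_UAW}.
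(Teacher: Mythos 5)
Your proposal is correct and follows exactly the route the paper intends: the paper introduces Lemma \ref{lem:diag_ABC_E} with the single sentence ``Applying Lemma \ref{lem:dia_UAW} to Theorem \ref{thm:CF_even} yields the following lemmas,'' and leaves the bookkeeping to the reader. Your computation fills in that bookkeeping accurately — in particular, the key simplification $a^2 = (k_0 k_1 q^{(d+1)/2})^2 = k_1^2$ via $k_0^2 = q^{-d-1}$, and the observation that the exponent set $\{q^{d-5},\ldots,q^{5-d}\}$ coming from the smaller factor $V_{(d-3)/2}$ sits inside $\{q^{d-1},\ldots,q^{1-d}\}$ coming from $V_{(d+1)/2}$ (both sets consist of even exponents since $d$ is odd, and the containment uses that $q$ is not a root of unity so the powers are distinct), are exactly what makes the union collapse to the stated condition.
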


\begin{lem}\label{lem:diag_ABC_E1}
If the $\H_q$-module $E(k_0,k_1,k_2,k_3)$ is irreducible then the following hold:
\begin{enumerate}
\item $A$ 
is multiplicity-free on all composition factors of the $\triangle_q$-module $E(k_0,k_1,k_2,k_3)^{1\bmod{4}}$ if and only if 
$k_3^2$ is not among $q^{d-3},q^{d-5},\ldots,q^{3-d}$.

\item $B$
is multiplicity-free on all composition factors of the $\triangle_q$-module $E(k_0,k_1,k_2,k_3)^{1\bmod{4}}$ if and only if 
$k_1^2$ is not among $q^{d-1},q^{d-3},\ldots,q^{1-d}$.

\item $C$ 
is multiplicity-free on all composition factors of the $\triangle_q$-module $E(k_0,k_1,k_2,k_3)^{1\bmod{4}}$ if and only if 
$k_2^2$ is not among $q^{d-3},q^{d-5},\ldots,q^{3-d}$.
\end{enumerate}
\end{lem}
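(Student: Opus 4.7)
The plan is to combine Theorem \ref{thm:CF_even}(iii) with Lemma \ref{lem:dia_UAW}, in direct parallel with the preceding proof of Lemma \ref{lem:diag_ABC_E}. Theorem \ref{thm:CF_even}(iii) lists the two composition factors of the $\triangle_q$-module $E(k_0,k_1,k_2,k_3)^{1\bmod 4}$ as $V_{(d-1)/2}(a,b,c)$-type modules, where the first parameter is $k_0 k_3 q^{(d+1)/2}$ in both cases, the third parameter is $k_0 k_2 q^{(d+1)/2}$ in both cases, and the second parameter is either $k_0 k_1 q^{(d+3)/2}$ or $k_0 k_1 q^{(d-1)/2}$. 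For a module of dimension $\frac{d+1}{2}$, Lemma \ref{lem:dia_UAW} says $A$ (resp.\ $B$, $C$) is multiplicity-free iff the square of the first (resp.\ second, third) parameter avoids the set $\{q^{d-3},q^{d-5},\ldots,q^{3-d}\}$.

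I would carry out the three parts separately. For (i), since both composition factors share the first parameter, I compute $(k_0 k_3 q^{(d+1)/2})^2 = k_0^2 k_3^2 q^{d+1} = k_3^2$ using $k_0^2=q^{-d-1}$; the condition collapses to $k_3^2\notin\{q^{d-3},\ldots,q^{3-d}\}$, as required. Part (iii) is identical in form: both factors share the third parameter $k_0k_2 q^{(d+1)/2}$, whose square is $k_2^2$, yielding the stated condition.

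Part (ii) is the only one requiring a merge. The two second parameters yield squared values $k_1^2 q^{2}$ and $k_1^2 q^{-2}$, so the multiplicity-freeness of $B$ on the two composition factors requires respectively that $k_1^2$ avoid $\{q^{d-5},q^{d-7},\ldots,q^{1-d}\}$ and $\{q^{d-1},q^{d-3},\ldots,q^{5-d}\}$. I would check that the union of these two arithmetic progressions is precisely $\{q^{d-1},q^{d-3},\ldots,q^{1-d}\}$, matching the statement. Because $q$ is not a root of unity, the exponents are in bijection with the corresponding integers, so the merge is a purely combinatorial check on exponents.

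The main obstacle, such as it is, lies in the boundary bookkeeping for part (ii): making sure that the shift by $q^{\pm 2}$ between the two composition factors produces exactly the symmetric set of $d$ scalars $\{q^{d-1},q^{d-3},\ldots,q^{1-d}\}$ without gaps or duplicates. Once that is verified, the rest is just substitution of $k_0^2=q^{-d-1}$ and direct application of Lemma \ref{lem:dia_UAW}.
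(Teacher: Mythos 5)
Your approach — reading off the two composition factors from Theorem \ref{thm:CF_even}(iii) and applying Lemma \ref{lem:dia_UAW} to each, with the substitution $k_0^2=q^{-d-1}$ — is exactly what the paper intends; its only indication of proof is the one-line remark that the lemma follows by ``applying Lemma \ref{lem:dia_UAW} to Theorem \ref{thm:CF_even}.'' Parts (i) and (iii) go through cleanly as you describe, since both factors share the first (resp.\ third) parameter and the avoid set is a single copy of $\{q^{d-3},\ldots,q^{3-d}\}$.

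In part (ii), however, the ``purely combinatorial check on exponents'' does not return $\{q^{d-1},q^{d-3},\ldots,q^{1-d}\}$ for all odd $d\geq 1$, and this is precisely the boundary bookkeeping you yourself flagged as the main obstacle. The two avoid progressions are $\{q^{d-5},q^{d-7},\ldots,q^{1-d}\}$ (from $b_1^2=k_1^2q^2$) and $\{q^{d-1},q^{d-3},\ldots,q^{5-d}\}$ (from $b_2^2=k_1^2q^{-2}$). The exponent intervals $[1-d,d-5]$ and $[5-d,d-1]$ cover $[1-d,d-1]$ only when $5-d\leq d-5$, i.e.\ $d\geq 5$. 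For $d=3$ the union is $\{q^2,q^{-2}\}$ and omits $q^0$; for $d=1$ both composition factors are one-dimensional, so both progressions are empty and $B$ is automatically multiplicity-free, yet the claimed set is $\{q^0\}$. So the merge does not ``match the statement'' without further argument for $d\in\{1,3\}$. Before asserting the equality you should either handle these small cases separately — for instance by showing that irreducibility of $E(k_0,k_1,k_2,k_3)$ already forces $k_1^2\neq q^0$ when $d\leq 3$, or by cross-checking against Lemma \ref{lem:t1t0_diag}, which must produce the same condition for the proof of Theorem \ref{thm:diagonal_even} to close — or flag that the identity of sets is established only for $d\geq 5$. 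As written, the claim that the two progressions merge to the stated set ``without gaps or duplicates'' is false in the small cases and needs to be addressed.
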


\begin{lem}\label{lem:diag_ABC_E2}
If the $\H_q$-module $E(k_0,k_1,k_2,k_3)$ is irreducible then the following hold:
\begin{enumerate}
\item $A$ 
is multiplicity-free on all composition factors of the $\triangle_q$-module $E(k_0,k_1,k_2,k_3)^{2\bmod{4}}$ if and only if 
$k_1^2$ is not among $q^{d-3},q^{d-5},\ldots,q^{3-d}$.

\item $B$
is multiplicity-free on all composition factors of the $\triangle_q$-module $E(k_0,k_1,k_2,k_3)^{2\bmod{4}}$ if and only if 
$k_3^2$ is not among $q^{d-3},q^{d-5},\ldots,q^{3-d}$.

\item $C$ 
is multiplicity-free on all composition factors of the $\triangle_q$-module $E(k_0,k_1,k_2,k_3)^{2\bmod{4}}$ if and only if 
$k_2^2$ is not among $q^{d-1},q^{d-3},\ldots,q^{1-d}$.
\end{enumerate}
\end{lem}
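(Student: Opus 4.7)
The proof plan is a direct application of Theorem~\ref{thm:CF_even}(iv) and Lemma~\ref{lem:dia_UAW}. By Theorem~\ref{thm:CF_even}(iv) each composition factor of the $\triangle_q$-module $E(k_0,k_1,k_2,k_3)^{2\bmod 4}$ is isomorphic to $V_{(d-1)/2}(a,b,c)$ for one of the two triples
\begin{align*}
(a,b,c) &= (k_0 k_1 q^{(d+1)/2},\;k_0 k_3 q^{(d+1)/2},\;k_0 k_2 q^{(d+3)/2}), \\
(a,b,c) &= (k_0 k_1 q^{(d+1)/2},\;k_0 k_3 q^{(d+1)/2},\;k_0 k_2 q^{(d-1)/2}).
\end{align*}
By Lemma~\ref{lem:dia_UAW}, $A$ (resp.\ $B$, $C$) is multiplicity-free on such a factor if and only if $a^2$ (resp.\ $b^2$, $c^2$) avoids the set $\{q^{d-3},q^{d-5},\ldots,q^{3-d}\}$. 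So the strategy is to compute these squares, simplify using the standing hypothesis $k_0^2 = q^{-d-1}$, and intersect the two conditions arising from the two factors.

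For statements (i) and (ii), both composition factors share the first (resp.\ second) parameter, and after squaring and simplifying with $k_0^2 = q^{-d-1}$ one obtains $a^2 = k_1^2$ and $b^2 = k_3^2$. Hence the condition from each factor coincides, yielding precisely $k_1^2 \notin \{q^{d-3},\ldots,q^{3-d}\}$ for (i) and $k_3^2 \notin \{q^{d-3},\ldots,q^{3-d}\}$ for (ii).

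For (iii) the two factors have different third parameters, whose squares work out to $k_2^2 q^2$ and $k_2^2 q^{-2}$. Demanding each avoid $\{q^{d-3},q^{d-5},\ldots,q^{3-d}\}$ gives the two constraints
\begin{align*}
k_2^2 &\notin \{q^{d-5},q^{d-7},\ldots,q^{1-d}\}, \\
k_2^2 &\notin \{q^{d-1},q^{d-3},\ldots,q^{5-d}\},
\end{align*}
whose union is exactly $\{q^{d-1},q^{d-3},\ldots,q^{1-d}\}$, matching the stated condition. I do not anticipate a real obstacle: the argument is a mechanical corollary of Theorem~\ref{thm:CF_even}(iv) together with Lemma~\ref{lem:dia_UAW}, and the only subtle point is the $q^{\pm 2}$ asymmetry between the two $c$-parameters in (iii), which is precisely what enlarges the excluded set for $k_2^2$ relative to the smaller sets for $k_1^2$ and $k_3^2$ in (i) and (ii).
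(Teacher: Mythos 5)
Your plan matches the paper's intended one-line argument exactly (the paper offers nothing more than ``Applying Lemma \ref{lem:dia_UAW} to Theorem \ref{thm:CF_even} yields the following lemmas''), and the squared-parameter computations are correct: both factors share $a^2 = k_1^2$ and $b^2 = k_3^2$, so (i) and (ii) are immediate, and the two factors give $c^2 = k_2^2 q^2$ and $c^2 = k_2^2 q^{-2}$.

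The gap is in the set-theoretic identity you use to close (iii). The union
\[
\{q^{d-5},q^{d-7},\ldots,q^{1-d}\}\cup\{q^{d-1},q^{d-3},\ldots,q^{5-d}\}
\]
equals $\{q^{d-1},q^{d-3},\ldots,q^{1-d}\}$ only when $d\geq 5$; the two arithmetic progressions of exponents interlock precisely when $d-5\geq 5-d$. For $d=3$ the union is $\{q^2,q^{-2}\}$, omitting $q^0$, and for $d=1$ both sets are empty. Since the irreducibility criterion of Theorem \ref{thm:irr_E} does not exclude $k_2^2=1$, the ``only if'' half of statement (iii) is not established by your computation for $d\leq 3$. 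Note the contrast with Lemma \ref{lem:diag_ABC_E}(iii): there one of the two factors of $E(k_0,k_1,k_2,k_3)$ has the larger dimension $\frac{d+3}{2}$, so a single factor already yields the full set $\{q^{d-1},\ldots,q^{1-d}\}$ for all $d\geq 1$, whereas in Theorem \ref{thm:CF_even}(iv) both factors have dimension $\frac{d+1}{2}$ and the boundary cases $d=1,3$ need a separate argument (or an explicit restriction to $d\geq 5$).
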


\begin{lem}\label{lem:diag_ABC_E3}
If the $\H_q$-module $E(k_0,k_1,k_2,k_3)$ is irreducible then the following hold:
\begin{enumerate}
\item $A$ 
is multiplicity-free on all composition factors of the $\triangle_q$-module $E(k_0,k_1,k_2,k_3)^{3\bmod{4}}$ if and only if 
$k_3^2$ is not among $q^{d-1},q^{d-3},\ldots,q^{1-d}$.

\item $B$
is multiplicity-free on all composition factors of the $\triangle_q$-module $E(k_0,k_1,k_2,k_3)^{3\bmod{4}}$ if and only if 
$k_1^2$ is not among $q^{d-3},q^{d-5},\ldots,q^{3-d}$.

\item $C$ 
is multiplicity-free on all composition factors of the $\triangle_q$-module $E(k_0,k_1,k_2,k_3)^{3\bmod{4}}$ if and only if 
$k_2^2$ is not among $q^{d-3},q^{d-5},\ldots,q^{3-d}$.
\end{enumerate}
\end{lem}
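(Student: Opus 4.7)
The plan is to combine Theorem \ref{thm:CF_even}(v), which identifies the composition factors of the $\triangle_q$-module $E(k_0,k_1,k_2,k_3)^{3\bmod 4}$, with Lemma \ref{lem:dia_UAW}, which characterizes multiplicity-freeness of $A$, $B$, $C$ on each irreducible $\triangle_q$-module of the form $V_e(a,b,c)$. The argument parallels what is expected for Lemmas \ref{lem:diag_ABC_E}, \ref{lem:diag_ABC_E1}, and \ref{lem:diag_ABC_E2}.

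By Theorem \ref{thm:CF_even}(v), every composition factor of the $\triangle_q$-module $E(k_0,k_1,k_2,k_3)^{3\bmod 4}$ is isomorphic to one of
\[
U_1 = V_{\frac{d-1}{2}}\bigl(k_0 k_3 q^{\frac{d-1}{2}},\, k_0 k_1 q^{\frac{d+1}{2}},\, k_0 k_2 q^{\frac{d+1}{2}}\bigr), \quad U_2 = V_{\frac{d-1}{2}}\bigl(k_0 k_3 q^{\frac{d+3}{2}},\, k_0 k_1 q^{\frac{d+1}{2}},\, k_0 k_2 q^{\frac{d+1}{2}}\bigr).
\]
Applying Lemma \ref{lem:dia_UAW} with $e=\frac{d-1}{2}$, multiplicity-freeness of $A$ (resp.\ $B$, $C$) on $V_e(a,b,c)$ is equivalent to $a^2$ (resp.\ $b^2$, $c^2$) avoiding the set $\{q^{d-3}, q^{d-5}, \ldots, q^{3-d}\}$. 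Using the constraint $k_0^2=q^{-d-1}$, the squared first, second, and third parameters of $U_1$ simplify to $(k_3^2 q^{-2}, k_1^2, k_2^2)$, and those of $U_2$ to $(k_3^2 q^{2}, k_1^2, k_2^2)$.

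For parts (ii) and (iii) the squared second and third parameters ($k_1^2$ and $k_2^2$) are the same for $U_1$ and $U_2$, so the condition that $B$, respectively $C$, be multiplicity-free on every composition factor reduces immediately to $k_1^2$, respectively $k_2^2$, lying outside $\{q^{d-3}, \ldots, q^{3-d}\}$, matching the claims. For part (i) the condition derived from $U_1$ becomes $k_3^2 \notin \{q^{d-1}, q^{d-3}, \ldots, q^{5-d}\}$, while that from $U_2$ becomes $k_3^2 \notin \{q^{d-5}, q^{d-7}, \ldots, q^{1-d}\}$. The main bookkeeping step, and the only real obstacle, is to identify the union of these two arithmetic progressions with the symmetric set $\{q^{d-1}, q^{d-3}, \ldots, q^{1-d}\}$ appearing in the statement of (i); aside from this set-theoretic verification, the proof is a routine transport of Lemma \ref{lem:dia_UAW} across the list of composition factors furnished by Theorem \ref{thm:CF_even}(v).
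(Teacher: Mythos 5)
Your outline follows the same route the paper takes (the paper itself gives no more than ``Applying Lemma~\ref{lem:dia_UAW} to Theorem~\ref{thm:CF_even} yields the following lemmas''), and your extraction of the two per-factor conditions from Theorem~\ref{thm:CF_even}(v) and Lemma~\ref{lem:dia_UAW} is computed correctly: using $k_0^2=q^{-d-1}$, the squared first parameters of the two factors are $k_3^2q^{-2}$ and $k_3^2q^2$, and the shared exclusion set for $V_{(d-1)/2}$ is $\{q^{d-3},q^{d-5},\ldots,q^{3-d}\}$, giving exactly the two shifted progressions you write down. Parts (ii) and (iii) then drop out immediately, as you observe.

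The problem is the step you defer. The identity you need — that
$\{q^{d-1},q^{d-3},\ldots,q^{5-d}\}\cup\{q^{d-5},q^{d-7},\ldots,q^{1-d}\}$
equals $\{q^{d-1},q^{d-3},\ldots,q^{1-d}\}$ — holds only when $d\ge 5$. For $d=3$ the two progressions are the singletons $\{q^{2}\}$ and $\{q^{-2}\}$, whose union $\{q^{2},q^{-2}\}$ lacks $q^{0}$, while the target set is $\{q^{2},q^{0},q^{-2}\}$. For $d=1$ both progressions are empty (the base exclusion set $\{q^{d-3},\ldots,q^{3-d}\}$ is empty for $V_{0}$), yet the target set is $\{q^{0}\}$. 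Hence the union criterion derived from Theorem~\ref{thm:CF_even}(v) and Lemma~\ref{lem:dia_UAW} is strictly weaker than the condition $k_3^2\notin\{q^{d-1},\ldots,q^{1-d}\}$ asserted in part (i) when $d\in\{1,3\}$; the ``only if'' direction of (i) is not established by this argument for those $d$. (Note that $k_3^2=1$ is compatible with irreducibility of $E(k_0,k_1,k_2,k_3)$ by Theorem~\ref{thm:irr_E}, so the low-$d$ case is not vacuous.) You either need a separate argument for $d\le 3$, or the set in the statement of part (i) needs to be the genuine union $\{q^{d-1},\ldots,q^{5-d}\}\cup\{q^{d-5},\ldots,q^{1-d}\}$. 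As written, the ``routine bookkeeping'' is the crux and it does not close.
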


We are in the position to prove Theorem \ref{thm:diagonal} in the even-dimensional case.

\begin{thm}\label{thm:diagonal_even}
If $V$ is an even-dimensional irreducible $\H_q$-module then the following are equivalent:
\begin{enumerate}
\item $A$ {\rm (}resp. $B${\rm )} {\rm (}resp. $C${\rm )}
is diagonalizable on $V$.

\item $A$ {\rm (}resp. $B${\rm )} {\rm (}resp. $C${\rm )} is diagonalizable on all composition factors of the $\triangle_q$-module $V$.

\item $A$ {\rm (}resp. $B${\rm )} {\rm (}resp. $C${\rm )} is multiplicity-free on all composition factors of the $\triangle_q$-module $V$.
\end{enumerate}
\end{thm}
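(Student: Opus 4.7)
The plan is to reduce to the classification in Theorem~\ref{thm:onto_E}, writing $V \cong E(k_0,k_1,k_2,k_3)^\e$ for some $\e \in \Z/4\Z$ and nonzero $k_0,k_1,k_2,k_3 \in \F$ with $k_0^2 = q^{-d-1}$, where $d+1 = \dim V$ (so $d$ is odd). The implication (i)$\Rightarrow$(ii) is a routine linear-algebra fact: each composition factor of $V$ as a $\triangle_q$-module is a subquotient by two $A$-invariant (resp.\ $B$-invariant, $C$-invariant) subspaces, and the restriction of a diagonalizable operator to an invariant subspace, as well as the induced operator on the corresponding quotient, remains diagonalizable. The equivalence (ii)$\Leftrightarrow$(iii) follows by applying Lemma~\ref{lem:dia_UAW} to each composition factor, since every composition factor is a finite-dimensional irreducible $\triangle_q$-module.

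The core content is (iii)$\Rightarrow$(i), which I would settle by a case analysis over $\e \in \Z/4\Z$ combined with the choice of operator among $\{A,B,C\}$, yielding twelve sub-cases. In each sub-case I would proceed in three steps: first, translate hypothesis (iii) into an explicit numerical condition on one of $k_1^2, k_2^2, k_3^2$ using the appropriate part of Lemmas~\ref{lem:diag_ABC_E}, \ref{lem:diag_ABC_E1}, \ref{lem:diag_ABC_E2}, \ref{lem:diag_ABC_E3}; second, use Table~\ref{Z/4Z-action} to rewrite the action of $A$ (resp.\ $B$, $C$) on $V = E(k_0,k_1,k_2,k_3)^\e$ as an operator of the form $t_i t_j + (t_i t_j)^{-1}$ on the untwisted module $E(k_0,k_1,k_2,k_3)$; third, invoke one of Lemmas~\ref{lem:t1t0_diag}, \ref{lem:t2t0_diag}, \ref{lem:t3t0_diag}, \ref{lem:t1t3+t1t3inv_diag} to conclude that this operator is diagonalizable on $E(k_0,k_1,k_2,k_3)$ under the numerical condition, which transports back to diagonalizability on $V$.

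The crucial matching is that the numerical conditions coming from the two groups of lemmas coincide. For instance, when $\e = 0$ and the operator is $A = t_1 t_0 + (t_1 t_0)^{-1}$, Lemma~\ref{lem:diag_ABC_E}(i) produces the condition $k_1^2 \notin \{q^{d-1}, q^{d-3}, \ldots, q^{1-d}\}$, which is exactly what Lemma~\ref{lem:t1t0_diag} requires for $t_1 t_0$ to be multiplicity-free on $E(k_0,k_1,k_2,k_3)$; once $t_1 t_0$ is diagonalizable and invertible, so is $A$. For $\e = 1 \bmod 4$ and $A$, Table~\ref{Z/4Z-action} turns $t_1 t_0$ into $t_2 t_1$, whose action agrees with that of $t_1 t_2$, and Lemma~\ref{lem:t1t3+t1t3inv_diag}(iii) gives the sufficient condition $k_3^2 \notin \{q^{d-3}, q^{d-5}, \ldots, q^{3-d}\}$, matching Lemma~\ref{lem:diag_ABC_E1}(i) exactly. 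The remaining ten sub-cases follow the same template.

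I do not anticipate a serious obstacle; the lemmas in Section~\ref{s:AWmodule} and in the present section have been arranged so that this matching proceeds uniformly, and once the twelve correspondences are verified, the three implications chain together to give the theorem. The only real work is the bookkeeping of the twelve cases.
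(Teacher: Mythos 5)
Your proposal is correct and follows the paper's own argument essentially step for step: (i)$\Rightarrow$(ii) by general linear algebra, (ii)$\Leftrightarrow$(iii) by Lemma~\ref{lem:dia_UAW}, and (iii)$\Rightarrow$(i) by reducing via Theorem~\ref{thm:onto_E} to the four twists of $E(k_0,k_1,k_2,k_3)$ and matching the numerical conditions from Lemmas~\ref{lem:diag_ABC_E}--\ref{lem:diag_ABC_E3} against those from Lemmas~\ref{lem:t1t0_diag}--\ref{lem:t1t3+t1t3inv_diag}. The two sub-cases you spot-checked are correct, and the remaining ones indeed go through by the same bookkeeping exactly as in the paper.
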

\begin{proof}
(i) $\Rightarrow$ (ii): Trivial.

(ii) $\Rightarrow$ (iii): Immediate from Lemma \ref{lem:dia_UAW}.

(iii) $\Rightarrow$ (i): 
Suppose that (iii) holds. 
Let $d=\dim V-1$. 
By Theorem \ref{thm:onto_E} there are an 
$\e\in \Z/4\Z$ and nonzero scalars $k_0,k_1,k_2,k_3\in \F$ with $k_0^2=q^{-d-1}$ such that the $\H_q$-module $E(k_0,k_1,k_2,k_3)^\e$ is isomorphic to $V$.

Using Lemmas \ref{lem:t1t0_diag}--\ref{lem:t3t0_diag} and \ref{lem:diag_ABC_E} yields that (i) holds for the case $\e=0\pmod{4}$. Note that the actions of $A,B,C$ on $E(k_0,k_1,k_2,k_3)^{1\bmod{4}}$ are identical to the actions of 
$$
t_2 t_1+(t_2 t_1)^{-1},
\qquad 
t_0 t_1+(t_0 t_1)^{-1},
\qquad 
t_3 t_1+(t_3 t_1)^{-1}
$$ 
on $E(k_0,k_1,k_2,k_3)$, respectively. Hence (i) holds for the case $\e=1\pmod{4}$ by Lemmas  \ref{lem:t1t0_diag}, \ref{lem:t1t3+t1t3inv_diag} and \ref{lem:diag_ABC_E1}. 
The actions of $A,B,C$ on $E(k_0,k_1,k_2,k_3)^{2\bmod{4}}$ are identical to the actions of 
$$
t_3 t_2+(t_3 t_2)^{-1},
\qquad 
t_1 t_2+(t_1 t_2)^{-1},
\qquad 
t_0 t_2+(t_0 t_2)^{-1}
$$ 
on $E(k_0,k_1,k_2,k_3)$, respectively. Hence (i) holds for the case $\e=2\pmod{4}$ by Lemmas  \ref{lem:t2t0_diag}, \ref{lem:t1t3+t1t3inv_diag} and \ref{lem:diag_ABC_E2}. The actions of $A,B,C$ on $E(k_0,k_1,k_2,k_3)^{3\bmod{4}}$ are identical to the actions of 
$$
t_0 t_3+(t_0 t_3)^{-1},
\qquad 
t_2 t_3+(t_2 t_3)^{-1},
\qquad 
t_1 t_3+(t_1 t_3)^{-1}
$$ 
on $E(k_0,k_1,k_2,k_3)$, respectively. 
Hence (i) holds for the case $\e=3\pmod{4}$ by Lemmas  \ref{lem:t3t0_diag}, \ref{lem:t1t3+t1t3inv_diag} and \ref{lem:diag_ABC_E3}. 

We have shown that (i) holds for all elements $\e\in \Z/4\Z$. Therefore (i) follows.
\end{proof}

We end this section with an example to show that the condition \ref{XY} is not a necessary condition for $A$ and $B$ as diagonalizable on even-dimensional irreducible $\H_q$-modules.

\begin{exam}\label{exam:b_E}
Set 
$
(k_0,k_1,k_3)=
(q^{-\frac{d+1}{2}},
q^{\frac{d-1}{2}},
q^{\frac{d-1}{2}})
$
and choose $k_2$ as a nonzero scalar in $\F$ satisfying 
$$
k_2\not\in\{q^\frac{3d-3}{2},q^\frac{3d-7}{2},\ldots,q^\frac{3-3d}{2}\}.
$$
By Theorem \ref{thm:irr_E} the $\H_q$-module $E(k_0,k_1,k_2,k_3)$ is irreducible.
The actions of $t_0 t_1$ and $t_3 t_0$ on $E(k_0,k_1,k_2,k_3)^{2\bmod{4}}$ are identical to the actions of 
$t_2 t_3$ and $t_1 t_2$
on $E(k_0,k_1,k_2,k_3)$, respectively. Since $t_2 t_3=(qt_0 t_1)^{-1}$ by (\ref{t0123}) it follows from Lemma \ref{lem:t1t0_diag} that $t_0 t_1$ is not diagonalizable on $E(k_0,k_1,k_2,k_3)^{2\bmod{4}}$.
Since $t_1 t_2=(qt_3 t_0)^{-1}$ by (\ref{t0123}) it follows from Lemma \ref{lem:t3t0_diag} that $t_3 t_0$ is not diagonalizable on $E(k_0,k_1,k_2,k_3)^{2\bmod{4}}$. 
However, the elements $A$ and $B$ are diagonalizable on $E(k_0,k_1,k_2,k_3)^{2\bmod{4}}$ by Lemma \ref{lem:t1t3+t1t3inv_diag}.
\end{exam}

\section{The conditions for $A,B,C$ as diagonalizable on odd-dimensional irreducible $\H_q$-modules}\label{s:odd}

In this section we will prove Theorem \ref{thm:diagonal} in the odd-dimensional case. We begin by some facts concerning the odd-dimensional irreducible $\H_q$-modules.

\begin{prop}
[Proposition 2.6, \cite{Huang:DAHAmodule}]
\label{prop:O}
Let $d\geq 0$ denote an even integer. Assume that $k_0,k_1,k_2,k_3$ are nonzero scalars in $\F$ with 
$$
k_0 k_1 k_2 k_3=q^{-d-1}.
$$
Then there exists a $(d+1)$-dimensional $\H_q$-module
 $O(k_0,k_1,k_2,k_3)$ satisfying the following conditions:
\begin{enumerate} 
\item There exists an $\F$-basis $\{v_i\}_{i=0}^d$ for $O(k_0,k_1,k_2,k_3)$  such that 
\begin{align*}
t_0 v_i 
&=
\left\{
\begin{array}{ll}
\textstyle
%(k_0 q^i+k_0^{-1} q^{-i}-k_0-k_0^{-1}) 
k_0^{-1} q^{-i} (1-q^i) (1-k_0^2 q^i) 
v_{i-1}
+
(
k_0+k_0^{-1}-k_0^{-1}q^{-i}
) 
v_i
\qquad
&\hbox{for $i=2,4,\ldots,d$},
\\
\textstyle
k_0^{-1} q^{-i-1}
(v_i-v_{i+1})
\qquad
&\hbox{for $i=1,3,\ldots,d-1$},
\end{array}
\right.
\\
t_0 v_0&= k_0 v_0,
\\
t_1 v_i
&=
\left\{
\begin{array}{ll}
-k_1(1-q^i)(1-k_0^2 q^i)v_{i-1}
+k_1 v_i
+k_1^{-1} v_{i+1}
\qquad
&\hbox{for $i=2,4,\ldots,d-2$},
\\
k_1^{-1} v_i
\qquad
&\hbox{for $i=1,3,\ldots,d-1$},
\end{array}
\right.
\\
t_1 v_0&= k_1 v_0 +k_1^{-1} v_1,
\qquad 
t_1 v_d=-k_1(1-q^d)(1-k_0^2 q^d)v_{d-1}
+k_1 v_d,
\\
t_2 v_i 
&=
\left\{
\begin{array}{ll}
k_2 q^{d-i}
(v_i-v_{i+1})
\qquad
&\hbox{for $i=0,2,\ldots,d-2$},
\\
\textstyle
-k_2(1-k_2^{-2}q^{i-d-1})
(1-q^{d-i+1})
v_{i-1}
+
(k_2+k_2^{-1}-
k_2 q^{d-i+1}) v_i
\qquad
&\hbox{for $i=1,3,\ldots,d-1$},
\end{array}
\right.
\\
t_2 v_d &= k_2 v_d,
\\
t_3 v_i 
&=
\left\{
\begin{array}{ll}
k_3 v_i
\qquad
&\hbox{for $i=0,2,\ldots,d$},
\\
\textstyle
-k_3^{-1}
(1-k_2^{-2} q^{i-d-1})
(1-q^{i-d-1})
v_{i-1}
+k_3^{-1} v_i
+k_3 v_{i+1}
\qquad
&\hbox{for $i=1,3,\ldots,d-1$}.
\end{array}
\right.
\end{align*}
\item The elements $c_0, c_1,c_2,c_3$ act on $O(k_0,k_1,k_2,k_3)$ as scalar multiplication by 
$$
k_0+k_0^{-1},
\quad 
k_1+k_1^{-1},
\quad 
k_2+k_2^{-1},
\quad 
k_3+k_3^{-1},
$$
respectively.
\end{enumerate}
\end{prop}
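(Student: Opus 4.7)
The plan is to realize $O(k_0,k_1,k_2,k_3)$ concretely as an action on a prescribed basis. Let $V$ denote a $(d+1)$-dimensional $\F$-vector space with basis $\{v_i\}_{i=0}^d$, define linear operators $t_0,t_1,t_2,t_3\in\mathrm{End}_\F(V)$ by the formulas given in (i), and verify that these operators obey the three sets of defining relations of $\H_q$ recorded in Definition \ref{defn:H}: invertibility of each $t_j$, the centrality of $t_j+t_j^{-1}$, and the identity $t_0t_1t_2t_3=q^{-1}$.

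The first step is to establish the quadratic relation $(t_j-k_j)(t_j-k_j^{-1})=0$ for each $j=0,1,2,3$; equivalently $t_j^2=(k_j+k_j^{-1})\,t_j-I$. This reduces to a direct calculation on each basis vector $v_i$, splitting into a short list of cases according to the parity of $i$ and whether $i$ is a boundary index ($i=0$ or $i=d$). In each case one matches the coefficients of $v_{i-1}$, $v_i$, $v_{i+1}$ in $t_j^2 v_i$ against $(k_j+k_j^{-1})$ times the corresponding coefficients in $t_j v_i$, minus those in $v_i$; the formulas in (i) are tailored so that the match holds identically. Once the quadratic relation is known, $t_j$ is invertible with $t_j^{-1}=(k_j+k_j^{-1})I-t_j$, so $t_j t_j^{-1}=t_j^{-1}t_j=I$ and $t_j+t_j^{-1}=(k_j+k_j^{-1})I$ acts as a scalar operator, which is automatically central in $\mathrm{End}_\F(V)$; this simultaneously establishes part (ii).

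The remaining relation $t_0 t_1 t_2 t_3 = q^{-1} I$ is verified on each basis vector by computing $t_3 v_i$, then $t_2(t_3 v_i)$, then $t_1(t_2 t_3 v_i)$, and finally $t_0(t_1 t_2 t_3 v_i)$. Since each of the four operators mixes adjacent basis vectors according to its own parity rule, the a priori support of $t_0 t_1 t_2 t_3 v_i$ spans $v_{i-4},\ldots,v_{i+4}$, and the crux of the argument is to show that all cross terms cancel so that only $q^{-1}v_i$ survives. The hypothesis $k_0 k_1 k_2 k_3 = q^{-d-1}$ enters the verification precisely at the boundary cases $i=0$ and $i=d$, where the formulas for $t_1$ and $t_2$ are modified; it ensures that no phantom $v_{-1}$ or $v_{d+1}$ contribution is required and that the boundary cancellations match the interior pattern. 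The main obstacle is the bookkeeping involved in this four-fold product, and an economical organization is to factor the computation by the parity of $i$ and by proximity to the boundary, reducing everything to a short list of polynomial identities in $q$ and the $k_j$ that follow from $k_0 k_1 k_2 k_3 q^{d+1}=1$.
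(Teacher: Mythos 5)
The paper does not contain a proof of Proposition \ref{prop:O}: it is imported verbatim from \cite{Huang:DAHAmodule} and used as a black box, so there is no internal argument to compare against. That said, the strategy you outline is the natural one for a result of this form and it is correct: one defines the four operators on the given basis, verifies that each satisfies $(t_j-k_j)(t_j-k_j^{-1})=0$ (which simultaneously gives invertibility, $t_j^{-1}=(k_j+k_j^{-1})I-t_j$, and that $t_j+t_j^{-1}$ is the scalar $k_j+k_j^{-1}$, hence central, so part (ii) is free), and then checks $t_0t_1t_2t_3=q^{-1}$ directly on the basis.

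There is one substantive misstatement in your plan that would mislead someone carrying it out. You claim the hypothesis $k_0k_1k_2k_3=q^{-d-1}$ ``enters the verification precisely at the boundary cases $i=0$ and $i=d$.'' This is not so: the constraint is needed at \emph{every} index. The formulas for $t_2$ and $t_3$ carry factors $q^{d-i}$, $q^{d-i+1}$, $q^{i-d-1}$ while those for $t_0$ and $t_1$ contain no $d$, so the product $t_0t_1t_2t_3 v_i$ at an interior index does retain $d$-dependence. For instance, for even $i$ with $2\le i\le d-2$ one computes
$$
t_3 v_i = k_3 v_i,\qquad
t_2 t_3 v_i = k_2k_3 q^{d-i}(v_i-v_{i+1}),\qquad
t_1 t_2 t_3 v_i = k_1k_2k_3 q^{d-i}\bigl(-(1-q^i)(1-k_0^2q^i)v_{i-1}+v_i\bigr),
$$
and applying $t_0$ causes the $v_{i-1}$ terms to cancel and leaves $t_0t_1t_2t_3 v_i = k_0k_1k_2k_3\, q^{d}\,v_i$, which equals $q^{-1}v_i$ only by virtue of the constraint. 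So the relation $k_0k_1k_2k_3 q^{d+1}=1$ is doing uniform work across all indices, not just absorbing boundary anomalies. (The boundary indices require, in addition, that the truncated formulas for $t_1 v_d$, $t_0 v_0$, $t_2 v_d$ feed consistently into the same cancellation pattern.) This does not break your proof strategy, but the bookkeeping you anticipate is genuinely distributed across the whole computation rather than concentrated at $i=0,d$.
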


For convenience, we adopt the following notations in the rest of this section: Let $d\geq 0$ denote an even integer. Let $k_0,k_1,k_2,k_3$ denote nonzero scalars in $\F$ with $k_0 k_1 k_2 k_3=q^{-d-1}$. Let $\{v_i\}_{i=0}^d$ denote the $\F$-basis for $O(k_0,k_1,k_2,k_3)$ from Proposition \ref{prop:O}(i).

The sufficient and necessary condition for the $\H_q$-module $O(k_0,k_1,k_2,k_3)$ as irreducible was given in \cite[Theorem 7.7]{Huang:DAHAmodule}. Moreover, those irreducible $\H_q$-modules are all $(d+1)$-dimensional irreducible $\H_q$-modules up to isomorphism \cite[Theorem 8.1]{Huang:DAHAmodule}. The statements are as follows:

\begin{thm}
[Theorem 7.7, \cite{Huang:DAHAmodule}]
\label{thm:irr_O}
The $\H_q$-module $O(k_0,k_1,k_2,k_3)$ is irreducible if and only if 
$$
k_0^2,k_1^2,k_2^2,k_3^2\not=q^{-i}
\qquad 
\hbox{for $i=2,4,\ldots,d$}.
$$
\end{thm}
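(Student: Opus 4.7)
The plan is to verify both implications by direct inspection of the explicit matrix entries in Proposition \ref{prop:O}(i), exploiting the hypothesis that $q$ is not a root of unity: the only way a product like $(1-q^i)(1-k_0^2 q^i)$ can vanish with $i\neq 0$ is via the parameter factor.

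For the only-if direction, I would start from the action of $t_0$: for even $i\geq 2$, the coefficient of $v_{i-1}$ in $t_0 v_i$ is $k_0^{-1}q^{-i}(1-q^i)(1-k_0^2 q^i)$, and the coefficient of $v_{i-1}$ in $t_1 v_i$ carries the same factor $(1-k_0^2 q^i)$. Hence if $k_0^2=q^{-i_0}$ for some even $i_0\in\{2,4,\ldots,d\}$, then on $v_{i_0}$ both $t_0$ and $t_1$ drop no new $v_{i_0-1}$ component; combined with the fact that $t_2,t_3$ act on the two parity classes of indices in a controlled upper/lower-triangular manner, the subspace spanned by $v_{i_0},v_{i_0+1},\ldots,v_d$ (or its complement) becomes $\H_q$-invariant, exhibiting reducibility. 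The $k_2^2=q^{-i}$ case follows from the parallel factor $(1-k_2^{-2}q^{i-d-1})$ appearing in the $t_2$- and $t_3$-actions, and the $k_1^2,k_3^2$ cases reduce to the first two by applying the $\Z/4\Z$-twist of Table \ref{Z/4Z-action} to interchange the roles of the four generators.

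For the if direction, assume all four listed equalities fail, and let $W$ be a nonzero $\H_q$-submodule of $O(k_0,k_1,k_2,k_3)$. I would take a nonzero $w\in W$ and use the scalar action of $c_0=t_0+t_0^{-1}$ (which is central and acts by $k_0+k_0^{-1}$) to confirm that the relevant eigenspace decompositions are preserved; then repeatedly apply $t_0-(\text{appropriate scalar})$ and $t_2-(\text{appropriate scalar})$ to shift the support of $w$ one index at a time. The shifts are governed precisely by the factors $(1-k_0^2 q^i)$ and $(1-k_2^{-2}q^{i-d-1})$, which are nonzero by assumption, together with $(1-q^i)$-type factors that are nonzero since $q$ is not a root of unity. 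This produces first $v_0\in W$ and then, by reversing direction with $t_1$ and $t_3$, successively $v_1,v_2,\ldots,v_d\in W$, so $W$ is the whole module.

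The main obstacle will be the asymmetry in Proposition \ref{prop:O}(i) between the formulas for even and odd indices: the generators $t_0,t_1$ shift support via $(1-k_0^2 q^i)$-type coefficients, while $t_2,t_3$ shift via $(1-k_2^{-2}q^{i-d-1})$-type coefficients, and each generator only effectively connects one parity class to its neighbor. Keeping track of which generator to apply at each step, and verifying that the four separate vanishing conditions on $k_0^2,k_1^2,k_2^2,k_3^2$ package together correctly under the global constraint $k_0k_1k_2k_3=q^{-d-1}$, requires careful bookkeeping. The already-established even-dimensional counterpart, Theorem \ref{thm:irr_E}, would serve as a close structural template and can be used to cross-check the reducibility criterion at each step.
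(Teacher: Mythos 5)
The paper does not prove Theorem~\ref{thm:irr_O}: it is quoted verbatim as Theorem~7.7 of \cite{Huang:DAHAmodule} and used as a black box, so there is no in-paper argument to compare yours against. Judged on its own terms, your sketch is a plausible opening move but has two genuine gaps.

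First, the ``only if'' direction. Your tail-span observation is correct for the $k_0^2$ case (the coefficient of $v_{i-1}$ in both $t_0 v_i$ and $t_1 v_i$ for even $i$ carries the factor $1-k_0^2 q^i$, so if $k_0^2=q^{-i_0}$ then $\operatorname{span}\{v_{i_0},\ldots,v_d\}$ is invariant) and likewise for the $k_2^2$ case (the factor $1-k_2^{-2}q^{i-d-1}$ in $t_2 v_i, t_3 v_i$ for odd $i$, yielding the invariant tail at $i = d+1-i_0$). But for $k_1^2$ and $k_3^2$ the proper submodules are \emph{not} of the form $\operatorname{span}\{v_j,\ldots,v_d\}$. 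For instance with $d=2$, $k_0=k_2=1$, $k_1=q^{-1}$, $k_3=q^{-2}$ (so $k_1^2=q^{-2}$), a short computation shows there is no invariant tail in the $v_i$-basis; the unique proper submodule is the kernel of the linear functional $\phi$ with $\phi(v_0)=1$, $\phi(v_1)=1-q^{-2}$, $\phi(v_2)=(1-q^2)(1-q^{-2})$, a hyperplane transverse to the basis. Your appeal to ``the $\Z/4\Z$-twist of Table~\ref{Z/4Z-action}'' can close this gap, but not by direct inspection: Theorem~\ref{thm:onto2_O} identifies $O(k_0,k_1,k_2,k_3)^{1\bmod 4}$ with $O(k_1,k_2,k_3,k_0)$ only when the module is irreducible, so the argument must be run as a contradiction (assume $O(k_0,k_1,k_2,k_3)$ irreducible, invoke the isomorphism, then apply the already-established $k_0^2$-criterion to $O(k_1,k_2,k_3,k_0)$ to get reducibility). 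As written you have not set this up, and the phrase ``interchange the roles of the four generators'' obscures the asymmetry of the automorphisms actually available (those in Lemma~\ref{lem:t0t2_O} all fix $t_0$).

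Second, the ``if'' direction. You propose to ``shift the support of $w$ one index at a time'' and conclude ``this produces first $v_0\in W$.'' This is not straightforward: the generators $t_i$ act tridiagonally, not strictly triangularly, so applying $t_0-(\text{scalar})$ to a general $w=\sum c_i v_i$ decreases the minimal index by one but simultaneously alters all higher coefficients and can increase the maximal index. Reaching a vector whose support \emph{contains} index $0$ is easy; isolating $v_0$ itself (or some single $v_i$) requires a further idea, for instance projecting onto a one-dimensional common eigenspace of suitable commuting elements (e.g.\ the $t_0 t_1$-eigenspace at the extremal eigenvalue $k_0 k_1 q^d$, cf.\ Lemma~\ref{lem:t0t1_O}), or a careful triangularity argument in the ordered basis. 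Also, the remark that $c_0=t_0+t_0^{-1}$ ``confirms that the relevant eigenspace decompositions are preserved'' does no work here, since $c_0$ is a scalar on the whole module and provides no nontrivial decomposition.
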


\begin{thm}
[Theorem 8.1, \cite{Huang:DAHAmodule}]
\label{thm:onto1_O}
If $V$ is a $(d+1)$-dimensional irreducible $\H_q$-module, then there exist nonzero scalars $k_0,k_1,k_2,k_3\in \F$ with $k_0 k_1 k_2 k_3=q^{-d-1}$ such that 
the $\H_q$-module $O(k_0,k_1,k_2,k_3)$ is isomorphic to $V$.
\end{thm}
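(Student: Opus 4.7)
The plan is to extract central parameters from $V$ and then construct an explicit basis of $V$ realizing the formulas of Proposition \ref{prop:O}(i). Since $V$ is finite-dimensional, irreducible, and $\F$ is algebraically closed, Schur's lemma forces each central element $c_i=t_i+t_i^{-1}$ to act on $V$ as a scalar. Writing that scalar as $k_i+k_i^{-1}$ with $k_i\in \F$ nonzero produces candidate parameters $(k_0,k_1,k_2,k_3)$, each determined up to the involution $k_i\mapsto k_i^{-1}$. The quadratic relation $t_i^2=c_it_i-1$ (implied by the centrality of $c_i$) then guarantees that each $t_i$ is semisimple with spectrum contained in $\{k_i,k_i^{-1}\}$ (the degenerate case $k_i^2=1$ must be handled separately).

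Next, I would locate a distinguished vector $v_0\in V$ serving as the bottom of the basis of Proposition \ref{prop:O}(i), namely a common eigenvector of $t_0$ and $t_3$. Since $t_0$ and $t_3$ do not commute in $\H_q$, this is not automatic; however, one can decompose $V$ into the $k_0$- and $k_0^{-1}$-eigenspaces of $t_0$, analyze how $t_3$ interacts with this decomposition via the relation $t_0t_1t_2t_3=q^{-1}$, and exploit the irreducibility of $V$ to show that $t_3$ must admit an eigenvector inside one of those eigenspaces. Replacing $k_0$ or $k_3$ by its inverse if necessary, we may then assume $t_0v_0=k_0v_0$ and $t_3v_0=k_3v_0$.

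Given $v_0$, define $v_1,v_2,\ldots,v_d$ recursively by reading off Proposition \ref{prop:O}(i): for instance $v_1:=k_1(t_1-k_1)v_0$, dictated by $t_1v_0=k_1v_0+k_1^{-1}v_1$, and each subsequent $v_{i+1}$ by the nonzero sub/superdiagonal coefficient in the prescribed action of $t_0$ or $t_1$ on $v_i$. A lengthy but essentially algorithmic induction, using the quadratic relations $t_i^2=c_it_i-1$, the centrality of each $c_i$, and especially the product relation $t_0t_1t_2t_3=q^{-1}$, then verifies that $\{v_i\}_{i=0}^d$ satisfies every action listed in Proposition \ref{prop:O}(i); the sign ambiguity in $k_1$ (resp.\ $k_2$) is pinned down in the process so that the formulas hold verbatim. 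Applying $t_0t_1t_2t_3=q^{-1}$ to $v_0$ and reading off the coefficient of $v_0$ on each side yields the product identity $k_0k_1k_2k_3=q^{-d-1}$, and irreducibility of $V$ forces the $v_i$ to be linearly independent and to span $V$. Hence the linear map $O(k_0,k_1,k_2,k_3)\to V$ sending basis vector to basis vector is an $\H_q$-module isomorphism.

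The main obstacle lies in the second step: in an abstract irreducible $\H_q$-module, the existence of a common eigenvector of $t_0$ and $t_3$ with prescribed eigenvalues is nontrivial precisely because these two generators obey no explicit commutation relation in $\H_q$, so the argument must make essential use of the product relation $t_0t_1t_2t_3=q^{-1}$ together with irreducibility. Once that distinguished vector is secured, the remainder of the construction is computationally heavy but largely parallel to the one underlying the modules $E(k_0,k_1,k_2,k_3)$ in Proposition \ref{prop:E}.
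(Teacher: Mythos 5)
There is no proof of this statement in the paper for you to be compared against: Theorem~\ref{thm:onto1_O} is imported verbatim from \cite[Theorem~8.1]{Huang:DAHAmodule} (note the bracketed citation in the theorem header), and the present paper simply uses it as a black box. So the only thing to assess is whether your plan, taken on its own, would succeed.

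Your outline has the right general shape (extract $c_i$-scalars by Schur, build a lowest vector, propagate a basis, verify the defining formulas, invoke irreducibility to get spanning), but it does not constitute a proof, and the gap is exactly the one you flag yourself. The existence of a nonzero $v_0\in V$ that is simultaneously a $t_0$-eigenvector and a $t_3$-eigenvector is the heart of the matter, and you offer only the vague instruction to ``decompose $V$ into the $k_0$- and $k_0^{-1}$-eigenspaces of $t_0$, analyze how $t_3$ interacts with this decomposition via the relation $t_0t_1t_2t_3=q^{-1}$, and exploit the irreducibility.'' Two non-commuting involutive-type operators on an irreducible module need not share an eigenvector in general, so this step genuinely requires an argument; asserting that one ``must'' exist is not one. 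You also do not address how the choice of $k_i$ versus $k_i^{-1}$ is actually pinned down (the cleanest device is the determinant of $t_i$ on $V$, which is what underlies Lemma~\ref{lem:det_O} and Theorem~\ref{thm:onto2_O}; since $\det(t_0)\det(t_1)\det(t_2)\det(t_3)=q^{-(d+1)}$ automatically, defining $k_i:=\det t_i$ both fixes the sign ambiguity and delivers the constraint $k_0k_1k_2k_3=q^{-d-1}$ at once), nor do you explain why the recursive construction truncates at index $d$ rather than running past the dimension of $V$ — that truncation is where $\dim V$ actually enters and it has to be shown, not assumed. Finally, the degenerate cases $k_i^2=1$ are mentioned but not handled, and in those cases $t_i$ need not be diagonalizable, so the eigenspace decomposition you rely on is unavailable. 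In short, the skeleton is plausible but the load-bearing steps are all missing.
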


Using Proposition \ref{prop:O}(i) a routine calculation yields the following lemma:

\begin{lem}
\label{lem:det_O}
The determinants of $t_0,t_1,t_2,t_3$ on $O(k_0,k_1,k_2,k_3)$ are equal to $k_0,k_1,k_2,k_3$, respectively.
\end{lem}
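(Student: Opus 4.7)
The proof is a routine matrix computation from the explicit actions recorded in Proposition~\ref{prop:O}(i). For each $i\in\{0,1,2,3\}$, my plan is to write the matrix $[t_i]$ of $t_i$ on the basis $\{v_j\}_{j=0}^d$, group the basis vectors into suitable pairs to expose a block (upper) triangular structure with $2\times 2$ diagonal blocks and one singleton $1\times 1$ block, and then factor $\det(t_i)$ as a product of block determinants. The singleton block will supply the claimed eigenvalue $k_i$, and every $2\times 2$ diagonal block will turn out to have determinant $1$.

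Concretely, for $t_0$ the subspaces $\mathrm{span}\{v_0\}$ and $\mathrm{span}\{v_{2j-1},v_{2j}\}$ with $j=1,\ldots,d/2$ are each $t_0$-invariant, so $[t_0]$ is literally block diagonal; expanding $(1-q^{2j})(1-k_0^2q^{2j})$ collapses each $2\times 2$ determinant to $1$, leaving the singleton factor $k_0$. For $t_2$ the grouping $\mathrm{span}\{v_{2j},v_{2j+1}\}$ with $j=0,\ldots,d/2-1$ together with the singleton $\mathrm{span}\{v_d\}$ is again block diagonal, every $2\times 2$ block having determinant $1$ and the singleton contributing $k_2$. For $t_1$ the pairing $\mathrm{span}\{v_{2j},v_{2j+1}\}$ plus singleton $\mathrm{span}\{v_d\}$ yields a block upper triangular matrix whose diagonal blocks reduce modulo the strictly upper blocks to $\bigl(\begin{smallmatrix}k_1 & 0 \\ k_1^{-1} & k_1^{-1}\end{smallmatrix}\bigr)$ of determinant $1$, while the singleton contributes $k_1$. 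For $t_3$ the pairing $\mathrm{span}\{v_0\}$ together with $\mathrm{span}\{v_{2j-1},v_{2j}\}$ is block upper triangular with diagonal blocks $\bigl(\begin{smallmatrix}k_3^{-1} & 0 \\ k_3 & k_3\end{smallmatrix}\bigr)$ of determinant $1$ and singleton contribution $k_3$.

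There is no conceptual obstacle. The only bookkeeping is that the appropriate grouping depends on the generator, since Proposition~\ref{prop:O}(i) treats $t_0,t_1$ and $t_2,t_3$ asymmetrically and the parity of the row index controls which row of the action formula applies; so one must carry out the four cases separately. The base case $d=0$ is immediate because then each $t_i$ simply acts on the one-dimensional module as the scalar $k_i$ by Proposition~\ref{prop:O}(i).
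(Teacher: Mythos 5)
Your proposal is correct and supplies exactly the detail the paper leaves implicit. The paper offers no written proof of this lemma — it simply states that a routine calculation from Proposition~\ref{prop:O}(i) gives it — and the block-triangular bookkeeping you describe (singleton plus $2\times 2$ blocks of determinant $1$, with the grouping shifted according to which generator is considered) is precisely what that routine calculation amounts to. One small remark: the four cases are not fully independent, since once you have $\det t_0=k_0$ and $\det t_2=k_2$ by the genuinely block-diagonal groupings, the relation $t_0t_1t_2t_3=q^{-1}$ together with $\det(t_0t_1)=k_0k_1$ (read off from Lemma~\ref{lem:t0t1_O}) forces $\det t_1=k_1$ and then $\det t_3=k_3$ from $k_0k_1k_2k_3=q^{-d-1}$, which would spare you the block upper-triangular analyses for $t_1$ and $t_3$; but your direct verification is equally valid.
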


By means of Lemma \ref{lem:det_O} we develop the following discriminant to determine the nonzero scalars $k_0,k_1,k_2,k_3\in \F$ in Theorem \ref{thm:onto1_O}.

\begin{thm}
\label{thm:onto2_O}
Suppose that $V$ is a $(d+1)$-dimensional irreducible $\H_q$-module. For any nonzero scalars $k_0,k_1,k_2,k_3$ with $k_0k_1k_2k_3=q^{-d-1}$, the following are equivalent:
\begin{enumerate}
\item The $\H_q$-module $O(k_0,k_1,k_2,k_3)$ is isomorphic to $V$. 

\item The determinants of $t_0,t_1,t_2,t_3$ on $V$ are equal to $k_0,k_1,k_2,k_3$, respectively.
\end{enumerate}
\end{thm}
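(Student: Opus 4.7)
The proof is a direct parallel of Theorem \ref{thm:onto2_E}, but substantially cleaner because the odd-dimensional parameters $(k_0,k_1,k_2,k_3)$ are recovered individually from the determinants of all four generators $t_0,t_1,t_2,t_3$, whereas in the even case one had to identify an element of $\Z/4\Z$ and then disambiguate $k_i$ up to inversion.

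The plan is as follows. For the implication (i) $\Rightarrow$ (ii), I would simply invoke Lemma \ref{lem:det_O}: if $V\cong O(k_0,k_1,k_2,k_3)$ as $\H_q$-modules, then since determinants of linear operators are preserved under module isomorphism, the determinants of $t_0,t_1,t_2,t_3$ on $V$ match those on $O(k_0,k_1,k_2,k_3)$, which are $k_0,k_1,k_2,k_3$ respectively.

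For the reverse implication (ii) $\Rightarrow$ (i), I would apply Theorem \ref{thm:onto1_O} to produce nonzero scalars $k_0',k_1',k_2',k_3' \in \F$ with $k_0'k_1'k_2'k_3' = q^{-d-1}$ such that $O(k_0',k_1',k_2',k_3') \cong V$ as $\H_q$-modules. By the already-established direction (i) $\Rightarrow$ (ii) (equivalently, by a direct appeal to Lemma \ref{lem:det_O}), the determinants of $t_0,t_1,t_2,t_3$ on $V$ are $k_0',k_1',k_2',k_3'$. Comparing with the hypothesis (ii) gives $k_i'=k_i$ for $i=0,1,2,3$, whence $O(k_0,k_1,k_2,k_3) \cong V$.

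There is no real obstacle here: unlike the even-dimensional analogue Theorem \ref{thm:onto2_O}, no use of Theorem \ref{thm:iso_E}-type isomorphism identifications is needed, because the determinants single out each $k_i$ uniquely (not just $k_i^{\pm 1}$), and no twist by $\Z/4\Z$ enters the classification on the odd-dimensional side (Theorem \ref{thm:onto1_O} already produces every irreducible without a twist parameter). Thus the proof reduces to a two-line comparison of determinants via Lemma \ref{lem:det_O}.
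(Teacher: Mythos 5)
Your proposal is correct and takes essentially the same approach as the paper, which simply states that the result is immediate from Theorem \ref{thm:onto1_O} and Lemma \ref{lem:det_O}. You have spelled out the two-line comparison-of-determinants argument that the paper leaves implicit, including the correct observation that, unlike the even-dimensional case, no appeal to inversion-isomorphisms or $\Z/4\Z$-twists is needed.
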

\begin{proof}
Immediate from Theorem \ref{thm:onto1_O} and Lemma \ref{lem:det_O}.
\end{proof}

\begin{lem}
\label{lem:t0t1_O}
The action of $t_0t_1$ on $O(k_0,k_1,k_2,k_3)$ is as follows:
\begin{align*}
(1-k_0 k_1 q^{2\lceil \frac{i}{2}\rceil} (t_0 t_1)^{(-1)^{i-1}})
v_i
=\left\{
\begin{array}{ll}
v_{i+1} 
\qquad 
&\hbox{for $i=0,1,\ldots,d-1$},
\\
0
\qquad 
&\hbox{for $i=d$}.
\end{array}
\right.
\end{align*}
\end{lem}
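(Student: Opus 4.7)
The plan is to verify the identity by direct computation using the explicit action formulas for $t_0$ and $t_1$ on the basis $\{v_i\}_{i=0}^d$ provided in Proposition \ref{prop:O}(i). No structural input beyond that proposition is needed; the work is case analysis on the parity of $i$ together with careful treatment of the boundary indices $i=0$ and $i=d$. Note first how the stated formula unpacks: for $i$ odd one has $2\lceil i/2\rceil=i+1$ and $(-1)^{i-1}=1$, so the claim reduces to
$t_0 t_1 v_i=\frac{v_i-v_{i+1}}{k_0k_1 q^{i+1}}$; for $i$ even one has $2\lceil i/2\rceil=i$ and $(-1)^{i-1}=-1$, so the claim reduces (after clearing $(t_0t_1)^{-1}$) to $t_0t_1(v_i-v_{i+1})=k_0k_1 q^i v_i$, and at $i=d$ (which is even) to $t_0t_1 v_d=k_0k_1 q^d v_d$.

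First I would dispatch the odd case $i=1,3,\ldots,d-1$: here $t_1 v_i=k_1^{-1}v_i$, so $t_0t_1 v_i=k_1^{-1}t_0 v_i=k_0^{-1}k_1^{-1}q^{-i-1}(v_i-v_{i+1})$, which is exactly the required equality. Next, for even $i$ in the range $2\leq i\leq d-2$, I would apply $t_1$ to $v_i$ using the three-term formula, then apply $t_0$ termwise. Proposition \ref{prop:O}(i) gives $t_0 v_{i-1}=k_0^{-1}q^{-i}(v_{i-1}-v_i)$, $t_0 v_i=k_0^{-1}q^{-i}(1-q^i)(1-k_0^2q^i)v_{i-1}+(k_0+k_0^{-1}-k_0^{-1}q^{-i})v_i$, and $t_0 v_{i+1}=k_0^{-1}q^{-i-2}(v_{i+1}-v_{i+2})$. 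The $v_{i-1}$ contributions from the first two terms cancel identically. The $v_i$ coefficient simplifies, after expanding $(1-q^i)(1-k_0^2q^i)$, to $k_0k_1 q^i$. The remaining $v_{i+1},v_{i+2}$ contribution equals $k_0^{-1}k_1^{-1}q^{-i-2}(v_{i+1}-v_{i+2})$, which by the odd-$i$ formula already proved (at index $i+1$) coincides with $t_0t_1 v_{i+1}$. Rearranging yields $t_0t_1(v_i-v_{i+1})=k_0k_1q^i v_i$, as required.

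The boundary case $i=0$ uses $t_1 v_0=k_1 v_0+k_1^{-1}v_1$ and $t_0 v_0=k_0 v_0$, and proceeds exactly as above, with the odd-$i$ formula at $i=1$ absorbing the $v_1,v_2$ terms. The boundary case $i=d$ uses $t_1 v_d=-k_1(1-q^d)(1-k_0^2 q^d)v_{d-1}+k_1 v_d$ (no $v_{d+1}$ term). Applying $t_0$ and using the same cancellation at $v_{d-1}$ and the same simplification at $v_d$ gives $t_0 t_1 v_d=k_0 k_1 q^d v_d$, i.e.\ $(1-k_0k_1q^d(t_0t_1)^{-1})v_d=0$.

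The only real obstacle is the algebraic bookkeeping: tracking the parity conventions in $\lceil i/2\rceil$ and $(-1)^{i-1}$, confirming the $v_{i-1}$ cancellation, expanding $(1-q^i)(1-k_0^2q^i)$ correctly so that the $v_i$ coefficient collapses to $k_0k_1q^i$, and ensuring that the odd-case identity can be re-used within the even-case computation to fold the $v_{i+1},v_{i+2}$ tail back into a single $t_0t_1 v_{i+1}$. Once these routine simplifications are carried out, the lemma follows by inspection.
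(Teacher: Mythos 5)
Your proposal is correct and follows essentially the same route as the paper, which simply says ``Apply Proposition~\ref{prop:O} to evaluate the action of $t_0 t_1$''; you have filled in the parity case-analysis and boundary bookkeeping that the paper leaves implicit. All the intermediate simplifications you cite (cancellation at $v_{i-1}$, collapse of the $v_i$ coefficient to $k_0k_1q^i$, and folding the $v_{i+1},v_{i+2}$ tail into $t_0t_1v_{i+1}$ via the odd-index identity) check out.
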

\begin{proof}
Apply Proposition \ref{prop:O} to evaluate the action of $t_0t_1$ on $O(k_0,k_1,k_2,k_3)$.
\end{proof}

\begin{lem}\label{lem:t0t2_O}
If the $\H_q$-module $O(k_0,k_1,k_2,k_3)$ is irreducible then the following hold:
\begin{enumerate}
\item There exists an $\F$-basis $\{w_i\}_{i=0}^d$ for $O(k_0,k_1,k_2,k_3)$ such that 
\begin{align*}
(1-k_0 k_1 q^{2\lceil \frac{i}{2}\rceil} (t_1 t_0)^{(-1)^{i-1}})
w_i
=\left\{
\begin{array}{ll}
w_{i+1} 
\qquad 
&\hbox{for $i=0,1,\ldots,d-1$},
\\
0
\qquad 
&\hbox{for $i=d$}.
\end{array}
\right.
\end{align*}

\item There exists an $\F$-basis $\{w_i\}_{i=0}^d$ for $O(k_0,k_1,k_2,k_3)$ such that 
\begin{align*}
(1-k_0 k_3 q^{2\lceil \frac{i}{2}\rceil} (t_3 t_0)^{(-1)^{i-1}})
w_i
=\left\{
\begin{array}{ll}
w_{i+1} 
\qquad 
&\hbox{for $i=0,1,\ldots,d-1$},
\\
0
\qquad 
&\hbox{for $i=d$}.
\end{array}
\right.
\end{align*}

\item There exists an $\F$-basis $\{w_i\}_{i=0}^d$ for $O(k_0,k_1,k_2,k_3)$ such that 
\begin{align*}
(1-k_0 k_2 q^{2\lceil \frac{i}{2}\rceil} (t_2 t_0)^{(-1)^{i-1}})
w_i
=\left\{
\begin{array}{ll}
w_{i+1} 
\qquad 
&\hbox{for $i=0,1,\ldots,d-1$},
\\
0
\qquad 
&\hbox{for $i=d$}.
\end{array}
\right.
\end{align*}
\end{enumerate}
\end{lem}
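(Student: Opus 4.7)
The plan is to adapt the proof of Lemma \ref{lem:t0t2_E} essentially verbatim, using the odd-dimensional discriminant Theorem \ref{thm:onto2_O} in place of its even counterpart and Lemma \ref{lem:t0t1_O} in place of Lemma \ref{lem:t0t1_E}. Parts (ii) and (iii) will be direct transcriptions, while part (i), which has no counterpart in Lemma \ref{lem:t0t2_E}, admits a simpler conjugation argument that does not require the discriminant machinery at all.

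For (i), I would let $\{v_i\}_{i=0}^d$ denote the basis from Proposition \ref{prop:O}(i) and set $w_i = t_1 v_i$. Since $t_1$ acts invertibly on $O(k_0,k_1,k_2,k_3)$, the set $\{w_i\}_{i=0}^d$ is an $\F$-basis. The identity $t_1 (t_0 t_1)^{\pm 1} t_1^{-1} = (t_1 t_0)^{\pm 1}$ lets one conjugate the recursion of Lemma \ref{lem:t0t1_O} by $t_1$, immediately producing
\begin{gather*}
(1 - k_0 k_1 q^{2\lceil i/2\rceil}(t_1 t_0)^{(-1)^{i-1}})\, w_i = w_{i+1}
\end{gather*}
for $i=0,1,\ldots,d-1$ (and the zero relation at $i=d$), as required.

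For (ii) and (iii), I would reuse the two $\F$-algebra automorphisms
\begin{gather*}
\rho_2 : (t_0,t_1,t_2,t_3) \mapsto (t_0,\, t_0^{-1} t_3 t_0,\, t_1 t_2 t_1^{-1},\, t_1),
\\
\rho_3 : (t_0,t_1,t_2,t_3) \mapsto (t_0,\, t_0^{-1} t_2 t_0,\, t_0^{-1} t_3 t_0,\, t_1)
\end{gather*}
already verified in the proof of Lemma \ref{lem:t0t2_E} (the only subtle relation being $t_0 t_1 t_2 t_3 = q^{-1}$, which holds by the cyclic shifts $t_3 t_0 t_1 t_2 = t_2 t_3 t_0 t_1 = q^{-1}$). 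These send $t_0 t_1$ to $t_3 t_0$ and to $t_2 t_0$, respectively. Writing $V = O(k_0,k_1,k_2,k_3)$, Proposition \ref{prop:O}(ii) together with the centrality of each $t_i+t_i^{-1}$ shows that $c_0,c_1,c_2,c_3$ act on $V^{\rho_2}$ as $k_0+k_0^{-1},k_3+k_3^{-1},k_2+k_2^{-1},k_1+k_1^{-1}$ and on $V^{\rho_3}$ as $k_0+k_0^{-1},k_2+k_2^{-1},k_3+k_3^{-1},k_1+k_1^{-1}$; Lemma \ref{lem:det_O} together with conjugation-invariance of the determinant gives the determinant tuples $(k_0,k_3,k_2,k_1)$ and $(k_0,k_2,k_3,k_1)$ on these twisted modules, each multiplying to $q^{-d-1}$. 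Theorem \ref{thm:onto2_O} then identifies $V^{\rho_2} \cong O(k_0,k_3,k_2,k_1)$ and $V^{\rho_3} \cong O(k_0,k_2,k_3,k_1)$. Applying Lemma \ref{lem:t0t1_O} to each of these modules yields a basis on which $t_0 t_1$ satisfies the stated recursion with parameter $k_0 k_3$ or $k_0 k_2$; since the action of $t_0 t_1$ on $V^{\rho_j}$ coincides with the action of $\rho_j(t_0 t_1)$ on $V$, this converts $t_0 t_1$ into $t_3 t_0$ for (ii) and into $t_2 t_0$ for (iii). I do not anticipate any serious obstacle: everything reduces to routine bookkeeping of the central actions, determinants, and image of $t_0 t_1$ under each automorphism.
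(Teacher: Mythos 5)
Your proposal matches the paper's proof: for parts (ii) and (iii) you use exactly the two automorphisms the paper uses, the same determinant bookkeeping via Lemma~\ref{lem:det_O}, the same identification of $V^{\rho}$ via Theorem~\ref{thm:onto2_O}, and the same transport of Lemma~\ref{lem:t0t1_O} (your extra computation of the $c_i$-actions is harmless but unnecessary, since Theorem~\ref{thm:onto2_O}, unlike its even-dimensional counterpart, is a purely determinantal criterion). For part (i) the paper merely remarks that $t_0t_1$ and $t_1t_0$ are similar on $V$, and your choice $w_i=t_1v_i$ is precisely an explicit implementation of that similarity, so the arguments agree.
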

\begin{proof}
Suppose the $\H_q$-module $V=O(k_0,k_1,k_2,k_3)$ is irreducible.

(i): Since $t_0 t_1$ is similar $t_1 t_0$ on $V$, the part (i) is immediate from Lemma \ref{lem:t0t1_O}.

(ii): By Definition \ref{defn:H} there exists a unique $\F$-algebra automorphism $\rho:\H_q\to \H_q$ given by 
\begin{eqnarray*}
(t_0,t_1,t_2,t_3) 
&\mapsto &
(t_0,t_0^{-1} t_3 t_0,t_1 t_2 t_1^{-1}, t_1)
\end{eqnarray*}
whose inverse sends 
$(t_0,t_1,t_2,t_3)$ to
$(t_0,t_3,t_3^{-1} t_2 t_3,t_0 t_1 t_0^{-1})$. By Lemma \ref{lem:det_O} the determinants of $t_0,t_1,t_2,t_3$ on $V^\rho$ are $k_0,k_3,k_2,k_1$, respectively. Therefore the $\H_q$-module $V^\rho$ is isomorphic to 
$
O(k_0,k_3,k_2,k_1)
$
by Theorem \ref{thm:onto2_O}. 
It follows from Lemma \ref{lem:t0t1_O} that there exists an $\F$-basis $\{w_i\}_{i=0}^d$ for $V^\rho$ such that 
\begin{align*}
(1-k_0 k_3 q^{2\lceil \frac{i}{2}\rceil} (t_0 t_1)^{(-1)^{i-1}})
w_i
=\left\{
\begin{array}{ll}
w_{i+1} 
\qquad 
&\hbox{for $i=0,1,\ldots,d-1$},
\\
0
\qquad 
&\hbox{for $i=d$}.
\end{array}
\right.
\end{align*}
Observe that the action of $t_0t_1$ on $V^\rho$ is identical to the action of $t_3 t_0$ on $V$. Hence (ii) follows.

(iii): By Definition \ref{defn:H} there exists a unique $\F$-algebra automorphism $\rho:\H_q\to \H_q$ given by 
\begin{eqnarray*}
(t_0,t_1,t_2,t_3) 
&\mapsto &
(t_0,t_0^{-1} t_2t_0,t_0^{-1} t_3t_0,t_1)
\end{eqnarray*}
whose inverse sends 
$(t_0,t_1,t_2,t_3)$ to
$(t_0,t_3 ,t_0 t_1t_0^{-1},t_0 t_2t_0^{-1})$. By Lemma \ref{lem:det_O} the determinants of $t_0,t_1,t_2,t_3$ act on $V^\rho$ are $k_0,k_2,k_3,k_1$, 
respectively.
Therefore the $\H_q$-module $V^\rho$ is isomorphic to 
$
O(k_0,k_2,k_3,k_1) 
$
by Theorem \ref{thm:onto2_O}. 
It follows from Lemma \ref{lem:t0t1_O} that there exists an $\F$-basis $\{w_i\}_{i=0}^d$ for $V^\rho$ such that 
\begin{align*}
(1-k_0 k_2 q^{2\lceil \frac{i}{2}\rceil} (t_0 t_1)^{(-1)^{i-1}})
w_i
=\left\{
\begin{array}{ll}
w_{i+1} 
\qquad 
&\hbox{for $i=0,1,\ldots,d-1$},
\\
0
\qquad 
&\hbox{for $i=d$}.
\end{array}
\right.
\end{align*}
Observe that the action of $t_0t_1$ on $V^\rho$ is identical to the action of $t_2 t_0$ on $V$. Hence (iii) follows.
\end{proof}

Using Lemmas \ref{lem:t0t1_O} and \ref{lem:t0t2_O} yields the following equivalent conditions for 
$t_it_0$ and $t_0 t_i$
as diagonalizable on $O(k_0,k_1,k_2,k_3)$ for $i\in\{1,2,3\}$. Additionally we obtain the following sufficient conditions for $A,B,C$ as diagonalizable on $O(k_0,k_1,k_2,k_3)$.

\begin{lem}\label{lem:t1t0_diag_O}
If the $\H_q$-module $O(k_0,k_1,k_2,k_3)$ is irreducible the the following are equivalent:
\begin{enumerate}
\item $t_1 t_0$ and $t_0 t_1$ are diagonalizable on $O(k_0,k_1,k_2,k_3)$.

\item $t_1 t_0$ and $t_0 t_1$ are multiplicity-free on $O(k_0,k_1,k_2,k_3)$.

\item $k_0^2 k_1^2$ is not among $q^{-2},q^{-4},\ldots,q^{-2d}$.
\end{enumerate}
\end{lem}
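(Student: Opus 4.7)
The plan is to mirror the proof of Lemma \ref{lem:t1t0_diag} from the even-dimensional case, substituting Lemma \ref{lem:t0t1_O} for Lemma \ref{lem:t0t1_E}. The first reduction is to note that on $V=O(k_0,k_1,k_2,k_3)$ we have $t_1t_0=t_1(t_0t_1)t_1^{-1}$, so $t_0t_1$ and $t_1t_0$ are conjugate and therefore share their spectrum with all algebraic and geometric multiplicities. It is thus enough to verify the three conditions for $t_0t_1$.

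For (ii) $\Leftrightarrow$ (iii), I would read off the eigenvalues of $t_0t_1$ directly from Lemma \ref{lem:t0t1_O}. In the basis $\{v_i\}_{i=0}^d$ the recursion exhibits $t_0t_1$ as a lower bidiagonal matrix whose diagonal entries are
\begin{gather*}
k_0k_1q^i \quad \hbox{for even } i\in\{0,2,\ldots,d\},
\qquad
k_0^{-1}k_1^{-1}q^{-i-1} \quad \hbox{for odd } i\in\{1,3,\ldots,d-1\}.
\end{gather*}
Since $q$ is not a root of unity, no two entries in the same parity class coincide, and a cross-class coincidence is governed by $k_0^2k_1^2=q^{-i-j-1}$ for even $i\in\{0,\ldots,d\}$ and odd $j\in\{1,\ldots,d-1\}$. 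The corresponding exponents $-i-j-1$ run through exactly $\{-2,-4,\ldots,-2d\}$, so the diagonal entries (equivalently, the eigenvalues) of $t_0t_1$ are pairwise distinct if and only if (iii) holds.

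The implication (ii), (iii) $\Rightarrow$ (i) is immediate since multiplicity-free forces diagonalizability. For (i) $\Rightarrow$ (ii), (iii), the plan is to show that $v_0$ is a cyclic vector for $t_0t_1$. Using the recursive formula of Lemma \ref{lem:t0t1_O} one checks that for every $j\in\{0,1,\ldots,d\}$,
\begin{gather*}
\prod_{\substack{i=0 \\ i\neq j}}^{d}\bigl(1-k_0k_1q^{2\lceil i/2\rceil}(t_0t_1)^{(-1)^{i-1}}\bigr)v_0
\end{gather*}
is a linear combination of $v_0,\ldots,v_d$ with nonzero $v_d$-coefficient. This forces the $t_0t_1$-annihilator of $v_0$ to have degree $d+1$ and therefore to agree with the characteristic polynomial of $t_0t_1$; equivalently, the minimal and characteristic polynomials of $t_0t_1$ coincide. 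A diagonalizable operator with this property has simple spectrum, which yields (ii) and (iii).

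The main obstacle is the combinatorial bookkeeping in the cyclic-vector step: the factor $(1-k_0k_1q^{2\lceil i/2\rceil}(t_0t_1)^{(-1)^{i-1}})$ acts on $v_j$ in qualitatively different ways according as $i\equiv j\pmod{2}$ or not, so the argument requires an induction that simultaneously tracks the $v_j$-coefficient and the $v_{j+1},v_{j+2}$ contributions produced at each step. The pattern is essentially identical to that used for Lemma \ref{lem:t1t0_diag}, with the parity roles shifted by one because $d$ is now even rather than odd, so the same finite computation closes the proof.
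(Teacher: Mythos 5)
Your proposal is correct and follows essentially the same route as the paper: reducing $t_1t_0$ to $t_0t_1$ by conjugation, reading off the spectrum from Lemma \ref{lem:t0t1_O} for (ii) $\Leftrightarrow$ (iii), and using a cyclic-vector argument on $v_0$ (so that the minimal and characteristic polynomials of $t_0t_1$ coincide) for (i) $\Rightarrow$ (ii), (iii). The paper states the cyclic-vector step more tersely in the odd case and refers back to the detailed parity bookkeeping done in the proof of the even-dimensional Lemma \ref{lem:t1t0_diag}, which is precisely the computation you describe; your explicit verification that the exponents $-i-j-1$ cover $\{-2,-4,\ldots,-2d\}$ is a slightly more elaborated version of what the paper leaves implicit.
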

\begin{proof}
Note that $t_1 t_0$ is similar to $t_0 t_1$ on $O(k_0,k_1,k_2,k_3)$.

(ii) $\Leftrightarrow$ (iii): 
By Lemma \ref{lem:t0t1_O} this characteristic polynomial of $t_0 t_1$ in $O(k_0,k_1,k_2,k_3)$ has the roots
\begin{gather}\label{eigen_t0t1_O}
k_0 k_1 q^i
\qquad \hbox{for $i=0,2,\ldots,d$},
\qquad 
k_0^{-1} k_1^{-1} q^{-i-1}
\qquad \hbox{for $i=1,3,\ldots,d-1$}
\end{gather} 
Since $q$ is not a root of unity, the scalars 
(\ref{eigen_t0t1_O}) are mutually distinct if and only if (iii) holds. Therefore (ii) and (iii) are equivalent.

(ii), (iii) $\Rightarrow$ (i): Trivial.

(i) $\Rightarrow$ (ii), (iii): By Lemma \ref{lem:t0t1_O} the product 
\begin{gather}\label{Yannihilator_O}
\prod_{i=0}^d (1-k_0 k_1 q^{2\lceil \frac{i}{2}\rceil} (t_0 t_1)^{(-1)^{i-1}})
\end{gather}
vanishes at $v_0$ and any proper factor of (\ref{Yannihilator_O}) does not vanish at $v_0$. Hence the $t_0 t_1$-annihilator of $v_0$ is equal to the characteristic polynomial of $t_0 t_1$ in $O(k_0,k_1,k_2,k_3)$. Therefore (i) implies (ii) and (iii).
\end{proof}

\begin{lem}\label{lem:t2t0_diag_O}
If the $\H_q$-module $O(k_0,k_1,k_2,k_3)$ is irreducible the the following are equivalent:
\begin{enumerate}
\item $t_2 t_0$ and $t_0 t_2$ are diagonalizable on $O(k_0,k_1,k_2,k_3)$.

\item $t_2 t_0$ and $t_0 t_2$ are multiplicity-free on $O(k_0,k_1,k_2,k_3)$.

\item $k_0^2 k_2^2$ is not among $q^{-2},q^{-4},\ldots,q^{-2d}$.
\end{enumerate}
\end{lem}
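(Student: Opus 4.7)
The plan is to mirror the proof of Lemma \ref{lem:t1t0_diag_O} almost verbatim, with Lemma \ref{lem:t0t1_O} replaced by Lemma \ref{lem:t0t2_O}(iii). The latter provides a basis $\{w_i\}_{i=0}^d$ of $O(k_0,k_1,k_2,k_3)$ on which $t_2 t_0$ acts in exactly the same lower bidiagonal shape that $t_0 t_1$ does on $\{v_i\}_{i=0}^d$ (with $k_1$ replaced by $k_2$). That transported picture should make every step carry over.

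First, I would record that $t_2 t_0$ and $t_0 t_2$ are similar on $O(k_0,k_1,k_2,k_3)$, so that the three diagonalizability/multiplicity-freeness claims can be verified for $t_2 t_0$ alone. Next, for the equivalence (ii)$\Leftrightarrow$(iii), I would read off the diagonal entries from the matrix of $t_2 t_0$ in the basis of Lemma \ref{lem:t0t2_O}(iii). This gives the list of eigenvalues
\[
k_0 k_2 q^{i} \quad (i=0,2,\ldots,d), \qquad k_0^{-1} k_2^{-1} q^{-i-1} \quad (i=1,3,\ldots,d-1),
\]
in direct analogy with \eqref{eigen_t0t1_O}. Because $q$ is not a root of unity, a short case check shows that these $d+1$ scalars are pairwise distinct if and only if $k_0^2 k_2^2 \notin \{q^{-2},q^{-4},\ldots,q^{-2d}\}$, giving (iii). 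That yields (ii)$\Leftrightarrow$(iii).

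For the remaining implication (i)$\Rightarrow$(iii), I would use the annihilator argument from Lemma \ref{lem:t1t0_diag_O}. Consider the operator
\[
P \;=\; \prod_{i=0}^d\bigl(1 - k_0 k_2 q^{2\lceil i/2\rceil} (t_2 t_0)^{(-1)^{i-1}}\bigr).
\]
By Lemma \ref{lem:t0t2_O}(iii), applied successively to $w_0$, one shows $P w_0 = 0$ while every proper subproduct fails to annihilate $w_0$ (each factor of the product simply shifts $w_j$ to $w_{j+1}$ in the basis). Hence the $t_2 t_0$-annihilator of $w_0$ equals the product $P$, which therefore equals the characteristic polynomial of $t_2 t_0$. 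Diagonalizability of $t_2 t_0$ then forces this polynomial to have distinct roots, which is exactly (iii). The direction (ii),(iii)$\Rightarrow$(i) is immediate.

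No genuine obstacle is expected; the only subtlety, as in the proof of Lemma \ref{lem:t1t0_diag_O}, is checking pairwise distinctness of the two interleaved geometric progressions of eigenvalues, which boils down to verifying that no equation of the form $k_0 k_2 q^i = k_0^{-1} k_2^{-1} q^{-j-1}$ (with $i$ even, $j$ odd, $0\le i,j\le d$) holds unless $k_0^2 k_2^2 = q^{-i-j-1}$, and as $i+j+1$ ranges over the even integers $2,4,\ldots,2d$, this matches the excluded set in (iii) exactly.
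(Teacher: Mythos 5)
Your proposal is correct and follows essentially the same route as the paper: the paper's proof of this lemma consists of the single remark that Lemma \ref{lem:t0t2_O}(iii) reduces it to the argument already given for Lemma \ref{lem:t1t0_diag_O}, which is exactly the transport you carry out (and verify in detail).
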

\begin{proof}
Applying Lemma \ref{lem:t0t2_O}(iii) the lemma follows by an argument similar to the proof of Lemma \ref{lem:t1t0_diag_O}.
\end{proof}

\begin{lem}\label{lem:t3t0_diag_O}
If the $\H_q$-module $O(k_0,k_1,k_2,k_3)$ is irreducible the the following are equivalent:
\begin{enumerate}
\item $t_3 t_0$ and $t_0 t_3$ are diagonalizable on $O(k_0,k_1,k_2,k_3)$.

\item $t_3 t_0$ and $t_0 t_3$ are multiplicity-free on $O(k_0,k_1,k_2,k_3)$.

\item $k_0^2 k_3^2$ is not among $q^{-2},q^{-4},\ldots,q^{-2d}$.
\end{enumerate}
\end{lem}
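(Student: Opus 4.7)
The plan is to follow the proof of Lemma \ref{lem:t1t0_diag_O} verbatim, with Lemma \ref{lem:t0t2_O}(ii) supplying the required triangular action of $t_3 t_0$ in place of the role that Lemma \ref{lem:t0t1_O} played for $t_0 t_1$. First I would observe that $t_3 t_0$ and $t_0 t_3$ are similar on $O(k_0,k_1,k_2,k_3)$ (conjugate by $t_0$ or by $t_3$), so throughout the argument it suffices to handle $t_3 t_0$.

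For (ii) $\Leftrightarrow$ (iii), I would read off the spectrum of $t_3 t_0$ directly from the basis $\{w_i\}_{i=0}^d$ in Lemma \ref{lem:t0t2_O}(ii). The relation $(1 - k_0 k_3 q^{2\lceil i/2 \rceil} (t_3 t_0)^{(-1)^{i-1}}) w_i = w_{i+1}$ (and $=0$ for $i=d$) means the matrix of $t_3 t_0$ with respect to $\{w_i\}$ is lower triangular with diagonal entries
\begin{gather*}
k_0 k_3 q^i \quad (i=0,2,\ldots,d), \qquad k_0^{-1}k_3^{-1} q^{-i-1} \quad (i=1,3,\ldots,d-1).
\end{gather*}
These $d+1$ scalars are the roots of the characteristic polynomial. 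Because $q$ is not a root of unity, collisions within each family are impossible; the only way two of them can coincide is across families, i.e.\ $k_0 k_3 q^i = k_0^{-1} k_3^{-1} q^{-j-1}$ for some even $i\in\{0,2,\ldots,d\}$ and odd $j\in\{1,3,\ldots,d-1\}$. The exponents $i+j+1$ then range over the even integers $2,4,\ldots,2d$, so mutual distinctness of the eigenvalues is equivalent to $k_0^2 k_3^2 \notin \{q^{-2},q^{-4},\ldots,q^{-2d}\}$, establishing (ii) $\Leftrightarrow$ (iii).

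The implications (ii), (iii) $\Rightarrow$ (i) are trivial. For (i) $\Rightarrow$ (ii),(iii), I would argue that the polynomial
\begin{gather*}
P(x) = \prod_{i=0}^d \bigl(1 - k_0 k_3 q^{2\lceil i/2 \rceil} x^{(-1)^{i-1}}\bigr)
\end{gather*}
annihilates $w_0$, since applying the factors in order walks $w_0 \mapsto w_1 \mapsto \cdots \mapsto w_d \mapsto 0$, while deleting any factor leaves a nonzero element (the factors commute, being polynomials in $t_3 t_0$). Hence the $t_3 t_0$-annihilator of $w_0$ has degree exactly $d+1 = \dim V$, so it equals the characteristic polynomial of $t_3 t_0$ and also its minimal polynomial. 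If $t_3 t_0$ is diagonalizable, this minimal polynomial is squarefree, forcing all $d+1$ eigenvalues to be distinct, which gives (ii) and hence (iii).

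There is no real obstacle; the entire argument is parallel to Lemma \ref{lem:t1t0_diag_O}, with the only bookkeeping being the explicit description of the eigenvalue set and verifying that the cross-family collision condition collapses to $k_0^2 k_3^2 \in \{q^{-2},\ldots,q^{-2d}\}$. One could simply invoke Lemma \ref{lem:t0t2_O}(ii) and say ``by an argument identical to the proof of Lemma \ref{lem:t1t0_diag_O},'' mirroring the author's treatment of Lemma \ref{lem:t2t0_diag_O}.
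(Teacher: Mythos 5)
Your proposal is correct and takes exactly the approach the paper intends: the paper's proof is a one-line reduction to Lemma \ref{lem:t0t2_O}(ii) together with the argument of Lemma \ref{lem:t1t0_diag_O}, and your write-up is a faithful, slightly more detailed elaboration of that reduction, including the correct spectrum reading and the cross-family collision count giving $k_0^2 k_3^2 \notin \{q^{-2},\ldots,q^{-2d}\}$.
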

\begin{proof}
Applying Lemma \ref{lem:t0t2_O}(ii) the lemma follows by an argument similar to the proof of Lemma \ref{lem:t1t0_diag_O}.
\end{proof}

\begin{lem}\label{lem:ABC_diag_O}
If the $\H_q$-module $O(k_0,k_1,k_2,k_3)$ is irreducible then the following hold:
\begin{enumerate}
\item If $k_0^2 k_1^2$ is not among $q^{-2},q^{-4},\ldots,q^{2-2d}$ then $A$ is diagonalizable on $O(k_0,k_1,k_2,k_3)$.

\item If $k_0^2 k_3^2$ is not among $q^{-2},q^{-4},\ldots,q^{2-2d}$ then $B$ is diagonalizable on $O(k_0,k_1,k_2,k_3)$.

\item If $k_0^2 k_2^2$ is not among $q^{-2},q^{-4},\ldots,q^{2-2d}$ then $C$ is diagonalizable on $O(k_0,k_1,k_2,k_3)$.
\end{enumerate}
\end{lem}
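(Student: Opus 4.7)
The plan is to prove (i), (ii), and (iii) by a common strategy, so I will write out (i); parts (ii) and (iii) will follow by replacing Lemma~\ref{lem:t0t1_O} with Lemma~\ref{lem:t0t2_O}(ii) and (iii) respectively and substituting $k_3$ or $k_2$ for $k_1$. First I will invoke the identity $t_it_j+(t_it_j)^{-1}=t_jt_i+(t_jt_i)^{-1}$ (used in the proof of Lemma~\ref{lem:t1t3+t1t3inv_diag}) to rewrite $A=t_0t_1+(t_0t_1)^{-1}$. A routine unpacking of Lemma~\ref{lem:t0t1_O} then yields
\begin{align*}
(t_0t_1)v_d&=\vartheta_d v_d, \\
(t_0t_1)v_{2k-1}&=\vartheta_{2k}^{-1}(v_{2k-1}-v_{2k}) \quad (k=1,\ldots,d/2),\\
(t_0t_1)v_{2k}&=\vartheta_{2k}v_{2k}+\vartheta_{2k+2}^{-1}(v_{2k+1}-v_{2k+2}) \quad (k=0,\ldots,d/2-1),
\end{align*}
where $\vartheta_i=k_0k_1q^i$. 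Hence the matrix $[t_0t_1]$ in the basis $\{v_i\}_{i=0}^d$ is lower triangular with diagonal $\vartheta_0,\vartheta_2^{-1},\vartheta_2,\ldots,\vartheta_d^{-1},\vartheta_d$, and $[A]=[t_0t_1]+[t_0t_1]^{-1}$ is lower triangular with diagonal $\theta_0,\theta_2,\theta_2,\ldots,\theta_d,\theta_d$, where $\theta_i=\vartheta_i+\vartheta_i^{-1}$.

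Next I will verify distinctness of the candidate eigenvalues $\theta_0,\theta_2,\ldots,\theta_d$: since $q$ is not a root of unity, $\theta_i=\theta_j$ with $i\neq j$ in $\{0,2,\ldots,d\}$ forces $\vartheta_i\vartheta_j=1$, i.e.\ $k_0^2k_1^2=q^{-(i+j)}$ with $i+j\in\{2,4,\ldots,2d-2\}$, which the hypothesis rules out. Thus $A$ has $d/2+1$ distinct eigenvalues with algebraic multiplicities $1,2,\ldots,2$.

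The main remaining task is to match each geometric multiplicity with the algebraic multiplicity. For $\theta_0$ this is automatic. For $\theta_d$, I will show that $\mathrm{span}(v_{d-1},v_d)$ is $t_0t_1$-invariant with restriction $M=\left(\begin{smallmatrix}\vartheta_d^{-1}&0\\-\vartheta_d^{-1}&\vartheta_d\end{smallmatrix}\right)$; since $\det M=1$ a short computation gives $M+M^{-1}=\theta_d I$, so this is a $2$-dimensional $\theta_d$-eigenspace of $A$, exhausting the eigenspace by the algebraic multiplicity bound. For $\theta_{2k}$ with $k=1,\ldots,d/2-1$, the hypothesis ensures $\vartheta_{2k}^2\neq 1$, so $\vartheta_{2k}$ and $\vartheta_{2k}^{-1}$ are distinct; a collision check on the diagonal of $[t_0t_1]$ shows each appears with algebraic, hence geometric, multiplicity $1$ as an eigenvalue of $t_0t_1$, producing two linearly independent eigenvectors of $t_0t_1$ that both lie in the $\theta_{2k}$-eigenspace of $A$. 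Summing dimensions yields $1+2(d/2)=d+1=\dim V$, so $A$ is diagonalizable. The subtle case will be $\vartheta_d^2=1$, which the hypothesis does not exclude and which would obstruct diagonalizability of $t_0t_1$ itself; the invariant-subspace argument for $k=d/2$ rescues the proof by showing $A$ still acts as a scalar on the problematic $2$-dimensional piece.
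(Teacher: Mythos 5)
Your proposal is correct and follows essentially the same strategy as the paper's proof: write $[A]$ in lower-triangular form relative to a basis adapted to $t_0t_1$ (resp.\ $t_1t_0$), observe the diagonal consists of $\theta_0$ once and each of $\theta_2,\theta_4,\ldots,\theta_d$ twice, verify distinctness from the hypothesis, and then match geometric to algebraic multiplicities by producing two independent eigenvectors of $t_0t_1$ (resp.\ $t_1t_0$) per middle eigenvalue and handling the final two-dimensional invariant block separately. The only differences are cosmetic: you work directly in the $\{v_i\}$ basis of Proposition~\ref{prop:O} via Lemma~\ref{lem:t0t1_O}, while the paper uses the $\{w_i\}$ basis of Lemma~\ref{lem:t0t2_O}(i) adapted to $t_1t_0$; and for the terminal block you invoke Cayley--Hamilton ($\det M=1$ gives $M+M^{-1}=(\operatorname{tr}M)\,I$), whereas the paper reaches the same conclusion by noting that the last two columns of $[A-\theta_{d/2}]$ vanish. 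Both variants correctly identify and dispose of the one delicate case $\vartheta_d^2=1$, which the hypothesis does not exclude and which makes $t_0t_1$ itself non-diagonalizable, by showing $A$ nonetheless acts as the scalar $\theta_d$ on that block.
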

\begin{proof}
(i): Let $\{w_i\}_{i=0}^d$ denote the $\F$-basis for $O(k_0,k_1,k_2,k_3)$ from Lemma \ref{lem:t0t2_O}(i). Given any element $X$ of $\H_q$, let $[X]$ denote the matrix representing $X$ with respect to $\{w_i\}_{i=0}^d$. 
Using Lemma \ref{lem:t0t2_O}(i) a direct calculation yields that $[A]$ is a lower triangular matrix of the form
\begin{gather}\label{[A]_O}
\begin{pmatrix}
\theta_0 & & & & & &{\bf 0}
\\
 &\theta_1
\\
  & &\theta_1
\\
  &  & &\theta_2
\\
& & & &\theta_2
\\
 &  & & & &\ddots
\\
&  & & & & &\theta_{\frac{d}{2}}
\\  
* & & & & & &0 &\theta_{\frac{d}{2}}
\end{pmatrix}
\end{gather}
where 
$$
\theta_i=k_0 k_1 q^{2i}+k_0^{-1} k_1^{-1} q^{-2i}
\qquad 
\hbox{for $i=0,1,\ldots,\frac{d}{2}$}.
$$

Since $q$ is not a root of unity and $k_0^2 k_1^2$ is not among $q^{-2},q^{-4},\ldots,q^{2-2d}$, the scalars $\{\theta_i\}_{i=0}^{\frac{d}{2}}$ are mutually distinct. Hence the $\theta_0$-eigenspace of $A$ in $O(k_0,k_1,k_2,k_3)$ has dimension one and the $\theta_i$-eigenspace of $A$ in $O(k_0,k_1,k_2,k_3)$ has dimension less than or equal to two for all $i=1,2,\ldots,\frac{d}{2}$. By (\ref{[A]_O}) the last two columns of $[A-\theta_{\frac{d}{2}}]$ are zero. By the rank-nullity theorem, the $\theta_\frac{d}{2}$-eigenspace of $A$ in $O(k_0,k_1,k_2,k_3)$ has dimension two.
To see the diagonalizability of $A$ it remains to show that the $\theta_i$-eigenspace of $A$ in $O(k_0,k_1,k_2,k_3)$ has dimension two for all $i=1,2,\ldots,\frac{d}{2}-1$.

By Lemma \ref{lem:t0t2_O}(i) the matrix $[t_1 t_0]$ is a lower triangular matrix of the form 
\begin{gather*}
\begin{pmatrix}
\vartheta_0 & & & & & &{\bf 0}
\\
 &\vartheta_1^{-1}
\\
  & &\vartheta_1
\\
  &  & &\vartheta_2^{-1}
  \\
&  &  & &\vartheta_2
\\
&  & & & &\ddots
\\
&  & & & & &\vartheta_{\frac{d}{2}}^{-1}
\\  
* & & & & & & &\vartheta_{\frac{d}{2}}
\end{pmatrix}
\end{gather*}
where 
$$
\vartheta_i=k_0 k_1 q^{2i}
\qquad 
\hbox{for $i=0,1,\ldots,\frac{d}{2}$}.
$$
Since $q$ is not a root of unity and $k_0^2 k_1^2\not\in\{q^{-2},q^{-4},\ldots,q^{2-2d}\}$ the eigenvalues $\{\vartheta_i^{\pm 1}\}_{i=1}^{\frac{d}{2}-1}$ of $t_1 t_0$ in $O(k_0,k_1,k_2,k_3)$ are multiplicity-free. Hence the $\theta_i$-eigenspace of $A$ in $O(k_0,k_1,k_2,k_3)$ has dimension two for all $i=1,2,\ldots,\frac{d}{2}-1$. Therefore (i) follows.

(ii): Applying Lemma \ref{lem:t0t2_O}(ii) the statement (ii) follows by an argument  similar to the part (i).

(iii): Applying Lemma \ref{lem:t0t2_O}(iii) the statement (iii) follows by an argument  similar to the part (i).
\end{proof}

Recall from the finite-dimensional irreducible $\triangle_q$-modules from \S\ref{s:AWmodule}. The composition factors of any $(d+1)$-dimensional irreducible $\H_q$-modules are classified as follows.

\begin{thm}
[\S 4.6, \cite{Huang:AW&DAHAmodule}]
\label{thm:CF_odd}
If the $\H_q$-module $O(k_0,k_1,k_2,k_3)$ is irreducible then the following hold:
\begin{enumerate}
\item If $d=0$ then the $\triangle_q$-module $O(k_0,k_1,k_2,k_3)$ is irreducible and it is isomorphic to 
\begin{align*}
V_0(k_0k_1,k_0 k_3, k_0 k_2).
\end{align*}

\item If $d\geq 2$ and $k_0^2=1$ then the factors of any composition series for the $\triangle_q$-module $O(k_0,k_1,k_2,k_3)$ are isomorphic to 
\begin{align*}
&V_0(k_0k_1,k_0 k_3, k_0 k_2),
\\
&V_{\frac{d}{2}-1}(k_0k_1q^{\frac{d}{2}+1},k_0 k_3 q^{\frac{d}{2}+1}, k_0 k_2 q^{\frac{d}{2}+1}),
\\
&V_{\frac{d}{2}-1}(k_0k_1q^{\frac{d}{2}+1},k_0 k_3 q^{\frac{d}{2}+1}, k_0 k_2 q^{\frac{d}{2}+1}).
\end{align*}

\item If $d\geq 2$ and $k_0^2\not=1$ then the factors of any composition series for the $\triangle_q$-module $O(k_0,k_1,k_2,k_3)$ are isomorphic to
\begin{align*}
&V_{\frac{d}{2}}(k_0k_1q^{\frac{d}{2}},k_0 k_3 q^{\frac{d}{2}}, k_0 k_2 q^{\frac{d}{2}}),
\\
&V_{\frac{d}{2}-1}(k_0k_1q^{\frac{d}{2}+1},k_0 k_3 q^{\frac{d}{2}+1}, k_0 k_2 q^{\frac{d}{2}+1}).
\end{align*}
\end{enumerate}
\end{thm}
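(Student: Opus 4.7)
My plan combines two ingredients: (I) the $\triangle_q$-submodule lattice of $O=O(k_0,k_1,k_2,k_3)$, extracted from the action of $t_0$, which fixes the dimensions of the composition factors; and (II) central-character matching via Proposition \ref{prop:UAWd}(ii), which pins down each factor as a specific $V_n(a,b,c)$.

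For (I), read off the action of $t_0$ from Proposition \ref{prop:O}(i). The basis $\{v_i\}_{i=0}^d$ splits into $t_0$-invariant pieces: $\mathrm{span}(v_0)$ (eigenvalue $k_0$) together with the two-dimensional blocks $\mathrm{span}(v_{2j-1},v_{2j})$ for $j=1,\ldots,d/2$. A short computation shows that each such $2\times 2$ block has trace $k_0+k_0^{-1}$ and determinant $1$, hence characteristic polynomial $(T-k_0)(T-k_0^{-1})$. Thus $t_0$ is diagonalizable on $O$ precisely when $k_0^2\ne 1$. Invoking the semisimplicity criterion of \cite{Huang:AW&DAHAmodule}---the $\triangle_q$-module is completely reducible iff $t_0$ is diagonalizable, with the simple submodules being the $t_0$-eigenspaces---case (iii) yields a decomposition of $O$ into two simple $\triangle_q$-submodules of dimensions $d/2+1$ and $d/2$; in case (ii), each 2-dim block of $t_0-k_0 I$ has rank one (both rows are proportional), so $t_0$ is non-diagonalizable, $O$ is $\triangle_q$-non-semisimple, and the submodule-lattice classification of \cite{Huang:AW&DAHAmodule} forces a composition series of length three with factor dimensions $1, d/2, d/2$. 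Case (i), $d=0$, is trivial.

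For (II), since $O$ is $\H_q$-irreducible, the central elements $\alpha,\beta,\gamma$ of $\triangle_q$ act on $O$ as scalars by Schur's lemma applied to their $\H_q$-images; these scalars are computed by evaluating on $v_0$ via Theorem \ref{thm:hom} and Proposition \ref{prop:O}(i), yielding polynomial expressions in $k_0,\ldots,k_3$. For each composition factor $V_n(a,b,c)$ with dimension $n+1$ now known from (I), the three identities of Proposition \ref{prop:UAWd}(ii) form a linear system in $a+a^{-1}, b+b^{-1}, c+c^{-1}$ whose solution---up to the irrelevant $a\leftrightarrow a^{-1}$ symmetries---delivers triples of the form $(k_0 k_1 q^\star, k_0 k_3 q^\star, k_0 k_2 q^\star)$; the precise exponents in the statement then follow after elementary algebra, using $k_0 k_1 k_2 k_3 = q^{-d-1}$.

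The main obstacle is case (ii): since the two non-trivial composition factors are isomorphic, central-character data alone cannot separate them, and their appearance with multiplicity two must be extracted from the detailed non-semisimple submodule lattice rather than from Schur-computations. A secondary technical difficulty is bookkeeping the various exponent shifts---for example, $q^{d/2+1}$ in case (ii) versus $q^{d/2}$ in case (iii)---which arise from the dimension-dependent term $q^{n+1}+q^{-n-1}$ in Proposition \ref{prop:UAWd}(ii) and from the two different values of $n$ showing up in the composition series.
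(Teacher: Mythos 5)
Note first that this theorem is cited in the paper verbatim from $\S4.6$ of \cite{Huang:AW&DAHAmodule}; the present paper contains no proof of it, so there is nothing to compare your proposal against locally, and I assess it on its own merits.

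Part (I) of your argument is sound as far as it goes. The $2\times 2$ blocks $\mathrm{span}(v_{2j-1},v_{2j})$ are indeed $t_0$-invariant with trace $k_0+k_0^{-1}$ and determinant $1$, hence characteristic polynomial $(T-k_0)(T-k_0^{-1})$; when $k_0^2=1$ one checks that the two rows of $M-k_0I$ are proportional with ratio $q^{2j}-1\ne 0$, so each block is a single $2\times 2$ Jordan block and $t_0$ is non-diagonalizable, while when $k_0^2\ne 1$ each block is diagonalizable with distinct eigenvalues. Given the quoted semisimplicity criterion this determines the factor dimensions.

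Part (II) contains a genuine gap. Writing $x=a+a^{-1}$, $y=b+b^{-1}$, $z=c+c^{-1}$, $\lambda=q^{n+1}+q^{-n-1}$, the three identities from Proposition~\ref{prop:UAWd}(ii) are
\begin{align*}
yz+x\lambda&=\alpha_V,\\
zx+y\lambda&=\beta_V,\\
xy+z\lambda&=\gamma_V,
\end{align*}
which is a \emph{quadratic} system, not a linear one. It does not have a unique solution: if, say, $\alpha_V=\beta_V=\gamma_V=c$, then besides the symmetric family $x=y=z$ one has solutions with $z=\lambda$ and $x,y$ the two roots of $T^2-(c/\lambda)T+(c-\lambda^2)=0$, which are distinct for generic $c$; the same happens with $x=\lambda$ or $y=\lambda$. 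So the central character together with the dimension $n+1$ does not, by itself, ``deliver triples of the form $(k_0k_1q^{\star},k_0k_3q^{\star},k_0k_2q^{\star})$.'' To actually identify the composition factors you need harder information: compute the spectrum of $A$ (and of $B$) on each $t_0$-(generalized) eigenspace of $O(k_0,k_1,k_2,k_3)$ and match it against the explicit eigenvalue list $\{aq^{2i-n}+a^{-1}q^{n-2i}\}_{i=0}^{n}$ of $A$ on $V_n(a,b,c)$ from Proposition~\ref{prop:UAWd}(i). This is precisely what produces the specific exponents $q^{d/2}$ and $q^{d/2+1}$ in the statement, and nothing in your central-character computation does so.

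There is also a circularity concern: for case (ii) you appeal to ``the submodule-lattice classification of \cite{Huang:AW&DAHAmodule}'' to obtain the length-three composition series with factor dimensions $1,\,d/2,\,d/2$, but Theorem~\ref{thm:CF_odd} is itself part of $\S4$ of that classification, so this step assumes the result being proved. A self-contained argument must instead exhibit the one-dimensional $\triangle_q$-submodule (the line $\ker(t_0-k_0)\cap\,\ldots$ cut out by $v_0$), show the quotient is a direct sum of two copies of an irreducible $\triangle_q$-module of dimension $d/2$, and identify those factors by the eigenvalue matching just described. Finally, your appeal to Schur's lemma tacitly uses that the images of $\alpha,\beta,\gamma$ in $\H_q$ are central in $\H_q$ (not merely in $\triangle_q$); this is true because they are polynomials in $c_0,\ldots,c_3$, but it deserves to be said.
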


Applying Lemma \ref{lem:dia_UAW} to Theorem \ref{thm:CF_odd} yields the following lemmas:

\begin{lem}\label{lem:diag_ABC_O_k02=1}
Suppose that the $\H_q$-module $O(k_0,k_1,k_2,k_3)$ is irreducible. If $k_0^2=1$ then the following hold:
\begin{enumerate}
\item $A$ 
is multiplicity-free on all composition factors of the $\triangle_q$-module $O(k_0,k_1,k_2,k_3)$ if and only if 
$k_1^2$ is not among $q^{-6},q^{-8},\ldots,q^{2-2d}$.

\item $B$
is multiplicity-free on all composition factors of the $\triangle_q$-module $O(k_0,k_1,k_2,k_3)$ if and only if 
$k_3^2$ is not among $q^{-6},q^{-8},\ldots,q^{2-2d}$.

\item $C$ 
is multiplicity-free on all composition factors of the $\triangle_q$-module $O(k_0,k_1,k_2,k_3)$ if and only if 
$k_2^2$ is not among $q^{-6},q^{-8},\ldots,q^{2-2d}$.
\end{enumerate}
\end{lem}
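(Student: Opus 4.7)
The plan is to apply Lemma \ref{lem:dia_UAW} separately to each composition factor listed in Theorem \ref{thm:CF_odd}, and then simplify the resulting conditions using the hypothesis $k_0^2=1$.

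First, I would handle the trivial case $d=0$: by Theorem \ref{thm:CF_odd}(i) the unique composition factor is the one-dimensional module $V_0(k_0k_1,k_0k_3,k_0k_2)$, on which $A$, $B$, $C$ are automatically multiplicity-free. The excluded list $q^{-6},q^{-8},\ldots,q^{2-2d}$ is interpreted as empty when $d<4$, so both sides of the stated equivalences hold trivially for $d=0$ (and for $d=2$ as well, below).

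For $d\geq 2$, I would invoke Theorem \ref{thm:CF_odd}(ii): the composition factors of the $\triangle_q$-module $O(k_0,k_1,k_2,k_3)$ are
\begin{align*}
V_0(k_0k_1,k_0k_3,k_0k_2)
\quad\text{and}\quad
V_{\frac{d}{2}-1}(k_0k_1q^{\frac{d}{2}+1},k_0k_3q^{\frac{d}{2}+1},k_0k_2q^{\frac{d}{2}+1})
\end{align*}
(with the latter appearing with multiplicity two). The one-dimensional factor $V_0$ contributes no condition whatsoever on any of $A$, $B$, $C$. For the remaining factor $V_{\frac{d}{2}-1}(a,b,c)$ with $a=k_0k_1q^{\frac{d}{2}+1}$, $b=k_0k_3q^{\frac{d}{2}+1}$, $c=k_0k_2q^{\frac{d}{2}+1}$, Lemma \ref{lem:dia_UAW} says that $A$ (resp.\ $B$, resp.\ $C$) is multiplicity-free precisely when $a^2$ (resp.\ $b^2$, resp.\ $c^2$) lies outside
\begin{align*}
\{q^{2(\frac{d}{2}-1)-2},q^{2(\frac{d}{2}-1)-4},\ldots,q^{2-2(\frac{d}{2}-1)}\}
=\{q^{d-4},q^{d-6},\ldots,q^{4-d}\}.
\end{align*}

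The final step is a direct substitution. Using $k_0^2=1$ we compute $a^2=k_1^2 q^{d+2}$, so the condition on $A$ becomes $k_1^2 q^{d+2}\notin\{q^{d-4},\ldots,q^{4-d}\}$, equivalently $k_1^2\notin\{q^{-6},q^{-8},\ldots,q^{2-2d}\}$. The same substitution with $k_3$ in place of $k_1$ (for $B$) and with $k_2$ in place of $k_1$ (for $C$) yields the two remaining statements. The only subtlety — and the ``main obstacle,'' though it is mild — is to verify that the list $\{q^{-6},\ldots,q^{2-2d}\}$ is correctly interpreted as the empty set when $d\in\{0,2\}$, ensuring consistency with the cases where all composition factors are one-dimensional and the equivalence is vacuous on both sides.
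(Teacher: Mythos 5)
Your proposal is correct and follows exactly the paper's intended route: the paper states the result directly after the remark ``Applying Lemma \ref{lem:dia_UAW} to Theorem \ref{thm:CF_odd} yields the following lemmas,'' which is precisely the computation you carry out. The arithmetic translating $a^2=k_1^2q^{d+2}\notin\{q^{d-4},\ldots,q^{4-d}\}$ into $k_1^2\notin\{q^{-6},\ldots,q^{2-2d}\}$, together with the observation that the list is empty for $d\in\{0,2\}$ (matching the one-dimensional composition factors), is correct.
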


\begin{lem}\label{lem:diag_ABC_O_k02neq1}
Suppose that the $\H_q$-module $O(k_0,k_1,k_2,k_3)$ is irreducible. If $k_0^2\not=1$ then the following hold:
\begin{enumerate}
\item $A$ 
is multiplicity-free on all composition factors of the $\triangle_q$-module $O(k_0,k_1,k_2,k_3)$ if and only if 
$k_0^2 k_1^2$ is not among $q^{-2},q^{-4},\ldots,q^{2-2d}$.

\item $B$
is multiplicity-free on all composition factors of the $\triangle_q$-module $O(k_0,k_1,k_2,k_3)$ if and only if 
$k_0^2 k_3^2$ is not among $q^{-2},q^{-4},\ldots,q^{2-2d}$.

\item $C$ 
is multiplicity-free on all composition factors of the $\triangle_q$-module $O(k_0,k_1,k_2,k_3)$ if and only if 
$k_0^2 k_2^2$ is not among $q^{-2},q^{-4},\ldots,q^{2-2d}$.
\end{enumerate}
\end{lem}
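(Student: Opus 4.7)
The statement is a direct consequence of Theorem \ref{thm:CF_odd} and Lemma \ref{lem:dia_UAW}, so the proof will be essentially a bookkeeping argument. The plan is to handle the three parts (i)--(iii) in parallel and cover the small-dimensional edge case separately. I will focus on part (i) since (ii) and (iii) are entirely analogous after the obvious substitutions $(k_1\leftrightarrow k_3)$ and $(k_1\leftrightarrow k_2)$ in the defining parameters of the composition factors.

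First I would dispose of the case $d=0$: here $O(k_0,k_1,k_2,k_3)$ is one-dimensional, so $A$ is automatically multiplicity-free on it, and the set $\{q^{-2},q^{-4},\ldots,q^{2-2d}\}$ is empty, so both sides of the biconditional hold trivially. For the main case $d\geq 2$ with $k_0^2\neq 1$, I would invoke Theorem \ref{thm:CF_odd}(iii) to identify the two isomorphism classes of composition factors of the $\triangle_q$-module $O(k_0,k_1,k_2,k_3)$, namely
\[
V_{d/2}\bigl(k_0k_1 q^{d/2},\,k_0k_3 q^{d/2},\,k_0k_2 q^{d/2}\bigr)
\quad\text{and}\quad
V_{d/2-1}\bigl(k_0k_1 q^{d/2+1},\,k_0k_3 q^{d/2+1},\,k_0k_2 q^{d/2+1}\bigr).
\]

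Next I would apply the equivalence (i)$\Leftrightarrow$(iii) of Lemma \ref{lem:dia_UAW} to each factor separately, with $a$ equal to the first parameter. For the first factor, the module has dimension $d/2+1$, so the forbidden set for $a^2=k_0^2k_1^2 q^d$ is $\{q^{d-2},q^{d-4},\ldots,q^{2-d}\}$; dividing by $q^d$ this translates to
\[
k_0^2 k_1^2 \notin \{q^{-2},q^{-4},\ldots,q^{-2d+2}\}=\{q^{-2},q^{-4},\ldots,q^{2-2d}\}.
\]
For the second factor, the module has dimension $d/2$, so the forbidden set for $a^2=k_0^2k_1^2 q^{d+2}$ is $\{q^{d-4},q^{d-6},\ldots,q^{4-d}\}$; dividing by $q^{d+2}$ this translates to $k_0^2k_1^2 \notin \{q^{-6},q^{-8},\ldots,q^{2-2d}\}$, which is contained in the first set.

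Finally, combining the two conditions by intersection yields the single requirement that $k_0^2k_1^2$ avoid $\{q^{-2},q^{-4},\ldots,q^{2-2d}\}$, giving exactly part (i). Parts (ii) and (iii) follow by the same calculation with $k_1$ replaced by $k_3$ and $k_2$ respectively, using the second and third parameter entries of the composition factors in Theorem \ref{thm:CF_odd}(iii). There is no substantive obstacle; the only point requiring care is to check that the condition coming from the smaller composition factor is subsumed by that of the larger one, so that no extra constraint survives.
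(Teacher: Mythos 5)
Your proof is correct and takes exactly the route the paper intends: the paper presents this lemma with the one-line remark that it follows by ``Applying Lemma \ref{lem:dia_UAW} to Theorem \ref{thm:CF_odd},'' and your write-up simply spells out that bookkeeping, including the $d=0$ edge case and the observation that the constraint from the smaller composition factor is subsumed by that from the larger one.
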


We are in the position to prove Theorem \ref{thm:diagonal} in odd-dimensional case:

\begin{thm}\label{thm:diagonal_odd}
If $V$ is an odd-dimensional irreducible $\H_q$-module then the following are equivalent:
\begin{enumerate}
\item $A$ {\rm (}resp. $B${\rm )} {\rm (}resp. $C${\rm )} is diagonalizable on $V$.

\item $A$ {\rm (}resp. $B${\rm )} {\rm (}resp. $C${\rm )} is diagonalizable on all composition factors of the $\triangle_q$-module $V$.

\item $A$ {\rm (}resp. $B${\rm )} {\rm (}resp. $C${\rm )} is multiplicity-free on all composition factors of the $\triangle_q$-module $V$.
\end{enumerate}
\end{thm}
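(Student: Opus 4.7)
The proof would follow the template of Theorem \ref{thm:diagonal_even} almost step for step, with $E(k_0,k_1,k_2,k_3)$ replaced by $O(k_0,k_1,k_2,k_3)$ and Theorem \ref{thm:onto1_O} replacing Theorem \ref{thm:onto_E}. The implication (i) $\Rightarrow$ (ii) is immediate: each term of a composition series of the $\triangle_q$-module $V$ is $A$-invariant (resp.\ $B$-invariant, $C$-invariant), and the restriction of a diagonalizable operator to an invariant subspace, as well as the induced operator on a quotient by an invariant subspace, is again diagonalizable. The implication (ii) $\Rightarrow$ (iii) is immediate from Lemma \ref{lem:dia_UAW}, since on a finite-dimensional irreducible $\triangle_q$-module, diagonalizability of $A$ (resp.\ $B$, $C$) is equivalent to multiplicity-freeness.

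For the substantive direction (iii) $\Rightarrow$ (i), I would set $d = \dim V - 1$ and use Theorem \ref{thm:onto1_O} to pick nonzero scalars $k_0,k_1,k_2,k_3 \in \F$ with $k_0 k_1 k_2 k_3 = q^{-d-1}$ such that the $\H_q$-module $O(k_0,k_1,k_2,k_3)$ is isomorphic to $V$. The argument then splits on whether $k_0^2 = 1$. If $k_0^2 \neq 1$, Lemma \ref{lem:diag_ABC_O_k02neq1}(i) translates the multiplicity-free hypothesis for $A$ on all composition factors directly into $k_0^2 k_1^2 \not\in \{q^{-2}, q^{-4}, \ldots, q^{2-2d}\}$, which by Lemma \ref{lem:ABC_diag_O}(i) yields that $A$ is diagonalizable on $V$. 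If instead $k_0^2 = 1$, Lemma \ref{lem:diag_ABC_O_k02=1}(i) gives $k_1^2 \not\in \{q^{-6}, q^{-8}, \ldots, q^{2-2d}\}$; combined with the irreducibility constraint $k_1^2 \not\in \{q^{-2}, q^{-4}, \ldots, q^{-d}\}$ from Theorem \ref{thm:irr_O}, this yields $k_0^2 k_1^2 = k_1^2 \not\in \{q^{-2}, q^{-4}, \ldots, q^{2-2d}\}$, and Lemma \ref{lem:ABC_diag_O}(i) again applies. The statements for $B$ and $C$ follow by repeating this two-case dichotomy with parts (ii) and (iii) of Lemmas \ref{lem:diag_ABC_O_k02=1}, \ref{lem:diag_ABC_O_k02neq1}, and \ref{lem:ABC_diag_O} in place of part (i).

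The main friction is purely combinatorial: in the $k_0^2 = 1$ case one has to verify that the finite set of exceptional values for $k_0^2 k_1^2$ in Lemma \ref{lem:ABC_diag_O}(i) is exactly the union of the irreducibility obstruction from Theorem \ref{thm:irr_O} and the multiplicity-free obstruction from Lemma \ref{lem:diag_ABC_O_k02=1}(i). This is a short but case-sensitive index match; the only care needed is at the small values $d = 0, 2$, where certain exclusion sets degenerate or become empty, but conceptually nothing more than the even-dimensional argument is required since all the hard analytic content is already packaged inside Lemmas \ref{lem:t1t0_diag_O}--\ref{lem:ABC_diag_O} and Theorem \ref{thm:CF_odd}.
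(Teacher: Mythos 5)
Your proposal is correct and follows the paper's proof essentially step for step: (i)$\Rightarrow$(ii) via invariance of composition factors, (ii)$\Rightarrow$(iii) via Lemma \ref{lem:dia_UAW}, and (iii)$\Rightarrow$(i) by invoking Theorem \ref{thm:onto1_O}, splitting on $k_0^2=1$ versus $k_0^2\neq 1$, and combining the irreducibility constraint of Theorem \ref{thm:irr_O} with Lemmas \ref{lem:diag_ABC_O_k02=1}, \ref{lem:diag_ABC_O_k02neq1}, and \ref{lem:ABC_diag_O}. Your explicit index-matching check in the $k_0^2=1$ case (including the degenerate small-$d$ cases) is the same verification the paper leaves implicit.
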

\begin{proof}
(i) $\Rightarrow$ (ii): Trivial.

(ii) $\Rightarrow$ (iii): Immediate from Lemma \ref{lem:dia_UAW}.

(iii) $\Rightarrow$ (i): 
Suppose that (iii) holds. 
Let $d=\dim V-1$. 
By Theorem \ref{thm:onto_E} there are nonzero scalars $k_0,k_1,k_2,k_3\in \F$ with $k_0 k_1 k_2 k_3=q^{-d-1}$ such that the $\H_q$-module $O(k_0,k_1,k_2,k_3)$ is isomorphic to $V$.

Suppose that $k_0^2=1$. Since the $\H_q$-module $O(k_0,k_1,k_2,k_3)$ is irreducible  it follows from Theorem \ref{thm:irr_O} that none of $k_1^2,k_2^2,k_3^2$ is among $q^{-2},q^{-4},\ldots,q^{-d}$. Combined with Lemmas \ref{lem:ABC_diag_O} and \ref{lem:diag_ABC_O_k02=1} this yields that (i) holds for the case $k_0^2=1$. By Lemmas \ref{lem:ABC_diag_O} and \ref{lem:diag_ABC_O_k02neq1} the statement (i) holds for the case $k_0^2\not=1$. Therefore (i) follows.
\end{proof}

We end this paper with an example to show that the condition \ref{XY} is not a necessary condition for $A$ and $B$ as diagonalizable on odd-dimensional irreducible $\H_q$-modules.

\begin{exam}\label{exam:b_O}
Choose $k_0$ as a nonzero scalar in $\F$ satisfying 
$$
k_0^2\not\in\{q^{-2},q^{-4},\ldots,q^{2-3d}\}
$$
and set 
$
(k_1,k_2,k_3)=
(q^{-d} k_0^{-1},
q^{d-1} k_0,
q^{-d} k_0^{-1})$. 
Note that $k_0 k_1 k_2 k_3=q^{-d-1}$. 
By Theorem \ref{thm:irr_O} the $\H_q$-module $O(k_0,k_1,k_2,k_3)$ is irreducible.
It follows from Lemmas \ref{lem:t1t0_diag_O} and \ref{lem:t3t0_diag_O} that neither of $t_0 t_1$ and $t_3 t_0$ is diagonalizable on $O(k_0,k_1,k_2,k_3)$. However the elements $A$ and $B$ are diagonalizable on $O(k_0,k_1,k_2,k_3)$ by Lemma \ref{lem:ABC_diag_O}. 
\end{exam}

\subsection*{Acknowledgements}

The research is supported by the Ministry of Science and Technology of Taiwan under the project MOST 106-2628-M-008-001-MY4.

\bibliographystyle{amsplain}
\bibliography{MP}

\end{document}